\numberwithin{equation}{section}
\newcommand{\margnote}[1]{
\ifthenelse{\boolean{shownotes}}%
{\marginpar{\raggedright\tiny\texttt{#1}}}%
{}%
}
\newcommand{\hole}[1]{
\ifthenelse{\boolean{shownotes}}%
{\begin{center} \fbox{ \rule {.25cm}{0cm}
\rule[-.1cm]{0cm}{.4cm} \parbox{.85\textwidth}{\begin{center}
\texttt{#1}\end{center}} \rule {.25cm}{0cm}}\end{center}}
{}
}
\theoremstyle{plain}
\newtheorem{lemma}{Lemma}[section]
\newtheorem{theorem}[lemma]{Theorem}
\newtheorem{proposition}[lemma]{Proposition}
\newtheorem{corollary}[lemma]{Corollary}
\theoremstyle{definition}
\newtheorem{remark}[lemma]{Remark}
\newtheorem{definition}[lemma]{Definition}
\theoremstyle{remark}
\newcommand{\Id}{\mathrm{I}}
\newcommand{\bm}{\mathbf{m}}
\newcommand{\bH}{\mathbf{H}}
\newcommand{\bh}{\mathbf{h}}
\newcommand{\E}{\mathbb{E}}
\newcommand{\R}{\mathbb{R}}
\newcommand{\C}{\mathbb{C}}
\newcommand{\N}{\mathbb{N}}
\newcommand{\cN}{{\mathcal{N}}}
\newcommand{\cT}{{\mathcal{T}}}
\newcommand{\cE}{{\mathcal{E}}}
\newcommand{\cL}{{\mathcal{L}}}
\newcommand{\cD}{{\mathcal{D}}}
\newcommand{\cF}{{\mathcal{F}}}
\newcommand{\cH}{{\mathcal{H}}}
\newcommand{\cQ}{{\mathcal{Q}}}
\newcommand{\cR}{{\mathcal{R}}}
\newcommand{\cS}{{\mathcal{S}}}
\newcommand{\cK}{{\mathcal{K}}}
\newcommand{\cP}{{\mathcal{P}}}
\newcommand{\cA}{{\mathcal{A}}}
\newcommand{\cJ}{{\mathcal{J}}}
\newcommand{\cG}{\mathcal{G}}
\newcommand{\tcS}{\widetilde{{\mathcal{S}}}}
\renewcommand{\Re}{\mathrm{Re}\,} 
\renewcommand{\Im}{\mathrm{Im}\,}
\newcommand{\sgn}{\mathrm{sgn}\,}
\newcommand{\PV}{\mathrm{P.V.}\,}
\DeclareMathOperator*{\Hess}{\mathrm{Hess}\,}
\DeclareMathOperator*{\rint}{\ThisStyle{\rotatebox{0}{$\SavedStyle\!\int\!$}}}
\newcommand{\ep}{\epsilon}
\newcommand{\brt}{\overline{\theta}}
\newcommand{\ess}{\sigma_\mathrm{\tiny{ess}}}
\newcommand{\ptsp}{\sigma_\mathrm{\tiny{pt}}}
\newcommand{\im}[1]{\textrm{Im}\left(#1\right)}
\newcommand{\pld}[2]{\left \langle #1 \, , #2 \right \rangle_{L^2}}
\newcommand{\phu}[2]{\left \langle #1 \, , #2 \right \rangle_{H^1}}
\newcommand{\pws}[2]{\left \langle #1 \, , #2 \right \rangle_{X}}
\newcommand{\pwse}[2]{\left \langle #1 \, , #2 \right \rangle_{H^1\times L^2}}
\newcommand{\nli}[1]{\left \| #1 \right \|_{L^\infty}}
\newcommand{\nld}[1]{\left \| #1 \right \|_{L^2}}
\newcommand{\nhu}[1]{\left \| #1 \right \|_{H^1}}
\newcommand{\nhd}[1]{\left \| #1 \right \|_{H^2}}
\newcommand{\nwse}[1]{\left \| #1 \right \|_{H^1\times L^2}}
\newcommand{\nws}[1]{\left \| #1 \right \|_{X}}
\newcommand{\res}[1]{\rho\left ( #1 \right )}
\DeclareMathOperator{\lspan}{Span}
\DeclareMathOperator{\opl}{\mathcal{L}}
\DeclareMathOperator{\opli}{\mathcal{L_\infty}}
\DeclareMathOperator{\supp}{supp}
\newcommand{\fint}{\hspace{4pt} \mbox{--}\hspace{-9pt}\int}
\begin{document}
\title[Stability of static N\'{e}el walls]{Nonlinear stability of static N\'{e}el walls in ferromagnetic thin films}

\author[A. Capella]{Antonio Capella}

\address{{\rm (A. Capella)} Instituto de Matem\'aticas\\Universidad Nacional Aut\'onoma de M\'exico\\Circuito Exterior s/n, Ciudad Universitaria\\C.P. 04510 Cd. de M\'{e}xico (Mexico)}

\email{capella@matem.unam.mx}

\author[C. Melcher]{Christof Melcher}

\address{{\rm (C. Melcher)} 
Lehrstuhl f\"ur Angewandte Analysis \\ RWTH Aachen\\D-52056 Aachen (Germany)}

\email{melcher@rwth-aachen.de}

\author[L. Morales]{Lauro Morales}

\address{{\rm (L. Morales)} Instituto de Investigaciones en Matem\'aticas Aplicadas y en Sistemas\\Universidad Nacional Aut\'onoma de M\'exico\\Circuito Escolar s/n, Ciudad Universitaria\\C.P. 04510 Cd. de M\'{e}xico (Mexico)}

\email{lauro.morales@iimas.unam.mx}

\author[R. G. Plaza]{Ram\'on G. Plaza}

\address{{\rm (R. G. Plaza)} Instituto de Investigaciones en Matem\'aticas Aplicadas y en Sistemas\\Universidad Nacional Aut\'onoma de M\'exico\\Circuito Escolar s/n, Ciudad Universitaria\\C.P. 04510 Cd. de M\'{e}xico (Mexico)}

\email{plaza@mym.iimas.unam.mx}

\begin{abstract}
The paper establishes the nonlinear (orbital) stability of static $180$-degree  N\'{e}el walls in ferromagnetic films under the reduced wave-type dynamics for the in-plane magnetization proposed by Capella, Melcher, and Otto \cite{CMO07}. The result follows from the spectral analysis of the linearized operator, which features a challenging non-local operator. As part of the proof,  we show that the linearized non-local linearized operator is a compact perturbation of a suitable non-local linear operator at infinity, a result interesting in itself.  
\end{abstract}

\maketitle
\setcounter{tocdepth}{1}

\tableofcontents


\section{Introduction}
\label{secintro}

In order to study the motion of magnetization vectors in ferromagnetic materials, in 1935 Landau and Lifshitz \cite{LanLif35} introduced a model system of equations, later reformulated and re-derived by Gilbert \cite{Gilb55,Gilb04}, which constitutes the fundamental and best accepted mathematical model that describes the magnetization in ferromagnets. Since ferromagnetic thin films exhibit a wide range of applications to the design and manufacturing of magnetic storage devices, the Landau-Lifshitz-Gilbert (LLG) model has attracted a great deal of attention from physicists and mathematicians alike in the last decades. A great variety of patterns of magnetization vectors appear in ferromagnetic films. For instance, narrow transition regions between opposite magnetization domains are called \emph{domain walls}. Some of the most common wall types in such materials are called N\'eel walls, separating two opposite magnetization regions by an in-plane rotation, oriented along an axis; Bloch walls, for which the magnetization rotates about the normal of the domain wall, pointing along the domain wall plane in a 3D system; or Walker walls, which are formed under the presence of an external magnetic field (see, e.g., Hubert and Sch\"{a}fer\cite{HuSch98} for further information).

One of the main objectives of recent mathematical studies is to understand the behavior of these dynamical coherent structures developed by the magnetization of a ferromagnet. The stability under small perturbations of these microstructures is important, not only to validate the mathematical model but also to enhance the numerical simulations performed by physicists and engineers to optimize and design new ferromagnetic materials (see, e.g., \cite{LabBer99}). Up to our knowledge, the literature on the dynamical stability theory for magnetic domain walls is scarce. The stability of one-dimensional Bloch walls has been addressed by Krukowski \cite{Kruk87} using a spectral (linearized) calculation of energies of ground states, and by Carbou and Labb\'e \cite{CarLab06}, under the nanowire, one-dimensional approximation by Sanchez \cite{Sanch09}. Takasao \cite{Tak11} improved this last result for Walker walls, also in one dimension and in the presence of an external magnetic field. Carbou \cite{Carb10} proved the stability of a Walker wall in the three-dimensional model using the energy method and under a simplifying assumption that gets rid of the non-local part of the operator. Most of these works employ energy methods to conclude stability, that is, the analyses are based on performing \emph{a priori} energy estimates on the equations of magnetization evolution and relying on their intrinsic structure. 

This paper is devoted to studying the dynamical stability of static N\'eel walls. Our departure point is the one-dimensional thin film reduction of the micromagnetic energy proposed by Capella, Melcher, and Otto \cite{CMO07} (outlined previously in \cite{MuOs06} for numerical purposes), which establishes an effective system for the in-plane magnetization by taking the thin film layer limit. The resulting system underlies a wave-type dynamics for the N\'eel wall's phase. The authors prove the existence and uniqueness of a static N\'eel wall's phase profile in the absence of external fields, as well as the emergence of traveling wave solutions near the static profile under the influence of a small constant external forcing. The authors also outline the stability of these structures under small one-dimensional perturbations. The present analysis constitutes a follow-up of such formulation and a full study of the nonlinear stability of the static N\'eel wall under small, one-dimensional perturbations of the phase itself. As far as we know, this problem has not been studied before in the literature. 

One of the main technical difficulties pertains to the non-locality of the dynamical equation, even at a linear level. In contrast to previous studies, we adopt a spectral approach to the problem. Motivated by the ideas in \cite{CMO07}, in which the linearized operator around the static phase is defined and previously studied, we profit from this information and perform a full spectral stability analysis of this operator, that includes a proof of its relative compactness with respect to an asymptotic operator. In contrast with standard techniques, which are usually applied to local differential operators with bounded coefficients and which are based on truncating such coefficients with their asymptotic limits (see, e.g., \cite{KaPro13}, Section 3.1), in this work and by necessity (because we are studying a non-local operator) we develop a novel procedure that focuses on describing totally bounded sets in terms of $L^2$-equicontinuity and uniform decay in Fourier space (see Theorem \ref{main_res_L-Linf} below). This relative compactness plays a crucial role in the location of the essential spectrum of a block matrix operator matrix that encodes the linearization of the nonlinear wave equation for perturbations of the static wall. It is proved that both essential and point spectra are stable, that is, they belong to the stable half-plane of complex numbers with negative real part, except for the origin, which is associated with translations of the N\'eel wall (see Theorem \ref{mainspthm}). An important feature is the presence of an \emph{spectral gap}, which is a positive distance from the eigenvalue zero to the rest of the spectrum. This allows us to establish the exponential decay of the solutions to the spectral problem when projected outside the one-dimensional vector space generated by translations of the static profile. Upon application of the well-known Gearhart-Pr\"uss theorem \cite{CrL03,EN00} and after the establishment of uniform resolvent estimates, we conclude that the semigroup generated by the linear block matrix operator is exponentially decaying in the appropriate subspace. This information is then used to prove nonlinear stability. For that purpose, we apply an abstract result, due originally to Sattinger \cite{Sa76} and adapted to a Hilbert space setting by Lattanzio \emph{et al.} \cite{LMPS16}, that establishes nonlinear stability from spectral stability by controlling the growth of nonlinear terms and profiting from the fact that the manifold generated by the wave is one-dimensional (the group of translations). We regard our contributions not only new in the context of ferromagnetic wall stability analysis, but also of methodological nature: we advocate for spectral and nonlinear analysis as a feasible and effective method in the study of this type of problems. The unpublished note by Huber \cite{Hub10} warrants note as the only work (as far as we know) that performs a rigorous spectral analysis of the linearized operator around a N\'eel wall for a layer of small (but positive) thickness, $\ep > 0$. Huber does not prove the spectral stability of this structure but employs the spectral information to obtain time-periodic solutions in a vicinity of it. (We note that in layers with positive thickness, the linearized operators are sectorial, in contrast with the present case of a thin-film limit.)

\subsection*{Plan of the paper}

This paper is structured as follows. Section \ref{secprel} contains a brief description of the thin-film dynamical model in \cite{CMO07}, recalls some of the main properties of the static N\'eel wall's phase, and states the main result of this paper. Section \ref{sec:strategy} presents a general overview of the proof's strategy. Section \ref{seclinearopL} is devoted to study of the linearized (scalar) operator around the static N\'eel wall defined in \cite{CMO07}. In particular, it is shown that it is relatively compact to an asymptotic operator, a feature that plays a key role in the stability analysis. Section \ref{secspectral} establishes the spectral stability of the N\'eel wall's phase. The spectral problem is posed in terms of a block operator and the stability of both its essential and point spectra is established. Section \ref{secsemigroup} is devoted to proving the existence of an associated semigroup to the dynamic problem and showing the exponential decay of solutions to the linearized equations outside a one-dimensional space related to profile translations. The final Section \ref{sec:nonlinear_stability} contains the proof of Theorem \ref{maintheorem}.

\subsection*{Notations}
Along this manuscript, we denote the spaces  $L^2(\R, \C), \ H^1(\R, \C)$ and $H^2(\R, \C)$ of complex-valued functions by $L^2, \ H^1$ and $H^2$. Meanwhile, their real-valued version are denoted by $L^2(\R), \ H^1(\R)$ and $H^2(\R)$ respectively. The set of unitary vectors in $\R^n$ is denoted by $\mathbb{S}^{n-1}$.  The operators $\hat{\cdot}:L^2\to L^2$ and $\check{\cdot}:L^2\to L^2$ stand for the Fourier transform and its inverse, respectively. Also, $\xi$ represents the variable in the frequency domain. In the same fashion, the half-Laplacian is defined by the relation $(-\Delta)^{1/2}u = (|\xi|\hat{u})\check{}$, and $\|u\|_{\dot{H}^{1/2}}$ denotes the fractional $H^{1/2}$-norm of the function $u\in L^2$ given by $\|u\|_{\dot{H}^{1/2}}:=\nld{|\xi|^{1/2}\hat{u}}$. Finally, if $X$ is a Banach space and $\cA$, $\cT$ are two operators in $X$ such that $D(\cA) = D(\cT)$  then, the commutator $[\cA, \cT]:D(\cA)\subset X\to X$ is given by the difference $\cA \cT- \cT\cA $. Moreover, if $K\subset D(\cA)$, then the direct image set of $K$ under the operator $\cA$ is denoted by $\cA\, K$, namely $\cA\, K:=\{ \cA u \, |\, u\in K\}$. 

\section{Preliminaries and main result}
\label{secprel}

\subsection{The micromagnetic model}
The Landau and Lifshitz continuum theory of ferromagnetic materials\cite{LanLif35} is based on a magnetization field ${\bf m}: \tilde\Omega\to {\mathbb S}^2$, that represents the local average magnetic moment,  and a variational principle in terms of the {\it micromagnetic energy}. In the absence of an external field, the micromagnetic energy is given by 
\[
 \E(\bm) = \frac{1}{2}\Big( d^2 \int_{\widetilde{\Omega}} |\nabla \bm|^2 \, dx  + Q \int_{\widetilde{\Omega}} \Phi(\bm) \, dx + \int_{\R^3} |\nabla U|^2\Big).
\]
where $d>0$ and $Q>0$ are constants. The first  term is known as the {\em exchange energy} and it quantifies the energy due to magnetic-dipole interactions in the sample. So, $d$ is called as the {\em exchange length}. The second  term penalizes crystalline anisotropy via $\Phi$ which usually has the form of an even polynomial in $\bm \in \mathbb{S}^2$. Then, $Q$ measures the relative strength of anisotropy penalization. 
Finally, the last  term in the expression of $\E(\bm)$ is called the {\em stray energy}. The term $\nabla U$ is called the \textit{stray field} and it is uniquely defined via the distribution equation $\Delta U = \textrm{div}\,(\bm \boldsymbol{1}_{\tilde \Omega})$ ($\boldsymbol{1}_A$ denotes the indicator function of the set $A$).
The stray-field energy favors vanishing distributional divergence, namely, $\textrm{div} {\bf m} = 0$ in $\tilde\Omega$ and ${\bf m}\cdot n = 0$ on $\partial\tilde\Omega$, where $n$ is the outward normal to the boundary. 
The combination of the stray-field energy (which is a non-local term) and the non-convex saturation constraint $|{\bf m}|=1$ gives rise to pattern formation among magnetic domains where the magnetization is almost constant. Thin transition layers separating the magnetic domains are known as domain walls and may form complex patterns \cite{HuSch98}. 

\subsection{Stationary N\'{e}el wall profile in soft magnetic thin films}
A thin film is an infinitely extended magnetic material $\tilde\Omega = \R^2\times (0,\delta)$ where $\delta \ll d$. In this regime,  it is safe to assume that the magnetization is independent of the $x_3$ variable. We assume further that the magnetization is $\ell$-periodic in the ${\bf e}_2$ direction, namely,
$$
{\bf m}(x_1,x_2+\ell) = \bm(x_1,x_2) \quad\text{for any } x = (x_1,x_2)\in\R^2, 
$$ 
that the material has a uniaxial anisotropy in the $e_2$ direction, with $\Phi({\bf m}) = 1- m_2^2$. We consider transition layers connecting antipodal states on the easy axis
$$
{\bf m}:\R^2\to \mathbb{S}^2 \quad\text{ with } \bm(\pm \infty, x_2) = (0,\pm 1, 0) \quad
\text{for any } x_2\in \R.
$$
In this case, the stray energy is approximated at leading order by 
$$
E_{stray}({\bf m})= \frac{1}{2} 
\fint_0^\ell\int_\R \left(\frac{\delta}{2} \left| 
|\nabla|^{\frac{1}{2}} {\mathcal H}(m)
\right|^2  + m_3^2 \right) dx,
$$
where ${\bf m} = (m, m_3)$ with $m=(m_1,m_2)$ and formally ${\mathcal H}(m)= \nabla \Delta^{-1}\text{div } m$ see \cite{CMO07}. Thus, the micromagnetic energy becomes 
$$
\E({\bf m})= \frac{1}{2} 
\fint_0^\ell\int_\R \left( d^2|\nabla m |^2 +   \frac{\delta}{2} \left| 
|\nabla|^{\frac{1}{2}} {\mathcal H}(m)
\right|^2 + Q(1-m_2)^2 + m_3^2  \right) dx.
$$

N\'eel walls are one-dimensional transition layers observed in soft ferromagnetic thin films, that is, magnetic materials with relatively weak anisotropic energy.   Here, we consider a parameter regime of soft thin films so that the anisotropy and relative thickness  are balanced, more precisely 
\begin{equation}\label{reg-Neel}
    Q \ll 1, \quad \kappa^{-1} = \delta/d \ll 1  \quad \text{while } {\mathcal Q} = 4 \kappa^2 Q.
\end{equation}
Therefore, it is feasible to introduce the small parameter $\varepsilon = \sqrt{Q}$. By rescaling the length $x$ by $w = \delta /(2Q)$, and the energy by $\delta/2$, the micromagnetic energy becomes
\begin{equation}\label{Energy-epsilon}
E_\varepsilon ({\bf m}) = \frac{1}{2} 
\fint_0^L\int_\R \left({\mathcal Q}|\nabla m |^2 +    \left| 
|\nabla|^{\frac{1}{2}} {\mathcal H}(m)
\right|^2 + (1-m_2)^2 + \left(\frac{m_3}{\varepsilon}\right)^2  \right) dx,
\end{equation}
where $L= \ell/w$ and we assumed $\varepsilon \ll {\mathcal Q} \lesssim 1$.
Assuming further that $m=m(x_1)$ then ${\mathcal H}(m) = m_1 {\bf e}_1$ is independent of $x_2$ and the reduced variational principle for the one-dimensional wall transition is
 \begin{equation}\label{varppio}
 \begin{aligned}
E_\varepsilon ({\bf m}) = \frac{1}{2} 
\int_\R  & \Big(
{\mathcal Q} | {\bf m}'|^2 
+| |\nabla|^{\frac{1}{2}} m_1|^2 
+ (1-m_2)^2 
+ \left(\frac{m_3}{\varepsilon}\right)^2  \Big) dx \to \min,\\ 
 &{\bf m} : \R \to {\mathbb S}^2 \qquad \text{with} \;\;  {\bf m}(\pm\infty) =(0,\pm 1,0),
 \end{aligned}
\end{equation}
where ${\bf m}'=\frac{d{\bf m}}{dx_1}$.
In \cite{GC04} it is shown that for $\varepsilon_k\to 0$ there exists a sequence of minimizers ${\bf m}_{\varepsilon_k}$ of 
\eqref{varppio} with a locally convergent subsequence to ${\bf m} = (m,0)$ and satisfies 
 \begin{equation}\label{limit-varppio}
 \begin{aligned}
E_0 (m) &= \tfrac{1}{2} \left({\mathcal Q} 
\Vert m'\Vert_{L^2(\R)}^2 + \Vert m_1 \Vert_{\dot{H}^{1/2}(\R)}^2 + \Vert m_1\Vert_{L^2(\R)}^2    \right) \to \min,\\ 
 &m : \R \to {\mathbb S}^2 \qquad \text{with} \;\;  
 m(\pm\infty) =(0,\pm 1).
 \end{aligned}
\end{equation}
Since the left translation is an $L^2$-isometry, the expression of $E_0(m)$ is invariant spatial translations. This invariance is inherited by the energy, yielding that minimizers of \eqref{limit-varppio} are unique up to  translations. Despite this invariance, $E_0(m)$ is a strictly convex functional on $m_1$  because  $|m'|^2=(m_1')^2/(1-m_1^2)$. Thus, the variational principle \eqref{limit-varppio} has a minimizer for any ${\mathcal Q}>0$. The minimizer that satisfies $m_1(0)=1$ is called the {\em N\'eel wall profile}. We refer to $E_0(m)$ as the  {\em N\'eel wall energy}.

For our analysis, 
we introduce the phase $\theta:\R \to \R$ so that $m=(\cos\theta, \sin\theta)$ and the variational principle \eqref{limit-varppio}  becomes  
\begin{equation}
 \label{varprob}
 \begin{aligned}
 \mathcal{E}(\theta) &= \frac{1}{2} \big( {\mathcal Q}\|\theta'\|_{L^2}^2 + \|\cos \theta\|^2_{{\dot H}^{1/2}} + \|\cos \theta\|_{L^2}^2 \big) \; \rightarrow \;  \min\\ 
 \theta : \R &\to (-\pi/2,\pi/2), \qquad \text{with} \;\; \theta(\pm\infty) = \pm \pi/2.
 \end{aligned}
\end{equation}
Since we are interested in N\'eel wall's dynamic,  we refer to minimizers of \eqref{varprob} as the \emph{static N\'eel wall's phase}. From now on, we assume ${\mathcal Q}=1$ . Despite $\theta$ is a function of one variable, we abuse notation by letting $\partial_x\theta=\theta'$ and $\partial_{x}^2\theta =\theta''$.

The following proposition summarizes the basic properties of the static N\'eel wall phase.    

\begin{proposition}[properties of the static N\'eel wall's phase \cite{CMO07,Melc03}]
\label{propNeelw}
 There exists a static N\'eel wall solution with phase $\brt = \brt(x)$, $\brt : \R \to (-\pi/2,\pi/2)$, satisfying the following:
 \begin{itemize}
  \item[(a)] 
  \label{propa} $\brt$ is a strict minimizer of the variational problem \eqref{varprob}, with center at the origin, $\brt(0) = 0$, and monotone increasing, $\partial_x \brt > 0$ $\, \forall x \in \R$.
  \item[(b)]\label{propb} $\brt$ is a smooth solution to
  \begin{equation}\label{ELeq}
   \partial_x^2 \theta + \sin \theta (1+(-\Delta)^{1/2}) \cos \theta = 0,
  \end{equation}
which is the Euler-Lagrange equation for the variational problem \eqref{varprob}.
\item[(c)]\label{propc} $\partial_x \brt \in H^2$.
\item[(d)]\label{propd} For every $u \in H^1$ such that $u(0) = 0$ there holds
\begin{equation}
 \label{Hesspos}
 \Hess \mathcal{E}(\brt) \langle u,u \rangle_{L^2} \geq \|u \, \partial_x \brt\|^2_{L^2} + \Re \, b [u\sin \brt, u \sin \brt],
\end{equation}
where the bilinear form $b[\cdot,\cdot] : H^1 \times H^1 \to \C$, defined as,
\begin{equation}
\label{defbilinearB}
 b[f,g] = \int_\R (1+|\xi|) \hat f(\xi) \hat g(\xi)^* \, d\xi, \qquad f, g \in H^1,
\end{equation}
is equivalent to the standard inner product in $H^{1/2}$.
\end{itemize}
\end{proposition}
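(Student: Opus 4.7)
\emph{Existence, centering and monotonicity.} For part (a), I would apply the direct method to \eqref{varprob} over the admissible class $\mathcal{A}=\{\theta\in H^1_{\mathrm{loc}}(\R):\cos\theta\in H^{1/2}(\R),\ \theta(\pm\infty)=\pm\pi/2\}$. Since $\mathcal{E}$ controls $\|\theta'\|_{L^2}^2$ and $\|\cos\theta\|_{H^{1/2}}^2$, a minimizing sequence has weak limits in the relevant spaces, and each of the three quadratic summands is weakly lower semicontinuous, producing a minimizer. Reformulating $\mathcal{E}$ in terms of $m_1=\cos\theta$ makes it strictly convex in $m_1$, giving strict minimality and uniqueness modulo the symmetry $x\mapsto -x$. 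A symmetric-decreasing rearrangement of $m_1$ cannot increase any term (the nonlocal piece by the P\'olya--Szeg\H{o} inequality for the half-Laplacian), so $m_1=\cos\brt$ is even and decreasing in $|x|$, which translates to $\brt(0)=0$ and $\partial_x\brt>0$.

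\emph{Euler--Lagrange equation and regularity.} For (b), first variations $\brt+s\varphi$ with $\varphi\in C_c^\infty(\R)$ yield \eqref{ELeq} in the distributional sense. Since $\brt\in L^\infty$ and $\cos\brt\in H^{1/2}$, the nonlinear right-hand side lies in $L^2$, placing $\partial_x^2\brt\in L^2$; a standard bootstrap using the mapping properties of $(-\Delta)^{1/2}$ on Sobolev scales upgrades $\brt$ to $C^\infty$. For (c), differentiating \eqref{ELeq} exhibits $\partial_x\brt$ as a solution of a linear second-order equation whose right-hand side is a bounded smooth expression in $\brt$ and $(-\Delta)^{1/2}\cos\brt$, hence in $L^2$; iterating once more places $\partial_x\brt$ in $H^2$.

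\emph{The Hessian inequality.} A direct second-variation computation along $\theta_s=\brt+su$, combined with the Euler--Lagrange identity $(1+(-\Delta)^{1/2})\cos\brt=-\partial_x^2\brt/\sin\brt$, gives
\begin{equation*}
\Hess\mathcal{E}(\brt)\langle u,u\rangle_{L^2} = \|u'\|_{L^2}^2 + \int_\R u^2\cot\brt\,\partial_x^2\brt\,dx + \Re b[u\sin\brt,u\sin\brt],
\end{equation*}
so \eqref{Hesspos} is equivalent to the reduced Hardy-type bound
\begin{equation*}
\|u'\|_{L^2}^2 + \int_\R u^2\cot\brt\,\partial_x^2\brt\,dx \;\geq\; \|u\,\partial_x\brt\|_{L^2}^2
\end{equation*}
for every $u\in H^1$ with $u(0)=0$. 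My strategy would be the Sturm-type substitution $u=v\,\partial_x\brt$, admissible because $\partial_x\brt$ is everywhere positive and $u(0)=0$ forces $v(0)=0$; expanding $(u')^2$, integrating by parts, and using the differentiated version of \eqref{ELeq} to express $\partial_x^2\brt$ in terms of lower-order data should reduce the claim to a sign question for a nonlocal remainder involving $\sin\brt\,(1+(-\Delta)^{1/2})(\sin\brt\,\partial_x\brt)$. I expect this last Hardy/Sturm--Liouville step in a nonlocal setting to be the main technical obstacle: establishing the correct sign requires delicate estimates tying the bilinear form $b$ to the $\dot{H}^{1/2}$-regularity of $\sin\brt\,\partial_x\brt$, together with careful use of the centering $u(0)=0$ to absorb the removable singularity of $\cot\brt$ at the wall center.
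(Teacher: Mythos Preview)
The paper does not actually prove this proposition; it simply collects the results from \cite{CMO07,Melc03,Melc04} with pointers to the relevant lemmata. Your sketches for (a)--(c) are reasonable outlines of the arguments found in those references (direct method plus rearrangement for existence and monotonicity, first variation plus bootstrap for the Euler--Lagrange equation and regularity).

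For (d), however, your proposed substitution is the wrong one, and this is why you end up anticipating a ``nonlocal remainder'' that in fact does not arise. You correctly observe that the $b$-terms cancel, so the claim reduces to the purely \emph{local} inequality
\[
\|u'\|_{L^2}^2 - \int_\R c_\theta\,u^2\,dx \;\geq\; \|u\,\partial_x\brt\|_{L^2}^2,
\qquad u\in H^1,\ u(0)=0.
\]
The ground-state factorization that linearizes this is $u = w\sin\brt$, not $u = v\,\partial_x\brt$. With $w = u/\sin\brt$ (the centering $u(0)=0$ removes the singularity at the unique zero of $\sin\brt$), expanding $(u')^2$, integrating by parts, and using the Euler--Lagrange identity $c_\theta = -\cot\brt\,\partial_x^2\brt$ gives the exact identity
\[
\|u'\|_{L^2}^2 - \int_\R c_\theta\,u^2\,dx - \|u\,\partial_x\brt\|_{L^2}^2
\;=\; \int_\R (w')^2\sin^2\brt\,dx \;\geq\; 0,
\]
with no nonlocal term left over. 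This is essentially the infinitesimal version of the strict convexity of the exchange energy $\int (m_1')^2/(1-m_1^2)\,dx$ in the variable $m_1=\cos\theta$ noted in the paper, and it is the content of Lemma~3 in \cite{CMO07}. Your substitution $u=v\,\partial_x\brt$ is natural for the full (nonlocal) operator $\cL$, whose kernel is spanned by $\partial_x\brt$, but the reduced inequality involves only the local part, for which $\sin\brt$ is the relevant weight.
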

\begin{proof}
Property \hyperref[propa]{\rm{(a)}} results from combining Lemma 1 in \cite{CMO07} with the main results of \cite{Melc03} (Propositions 1 and 2). The proof of the smoothness of the N\'eel wall can be found in \cite{Melc04} (Proposition 2). Since $u$ is a minimizer, it satisfies equation \eqref{ELeq} (see Lemma 1 in \cite{CMO07}). This shows \hyperref[propb]{\rm{(b)}}. Moreover, it is proved in \cite{CMO07} (Theorem 1 and Lemma 1) that $\partial_x \brt$, $\partial_x^2 \brt \in L^2(\R)$. As pointed out by the authors, from the Euler-Lagrange equation \eqref{ELeq} the regularity arguments of Lemma 1 can be bootstrapped to show that $\partial_x^3 \brt \in L^2(\R)$. This shows \hyperref[propc]{\rm{(c)}}. Finally, property \hyperref[propd]{\rm{(d)}} is the content of Lemma 3 in \cite{CMO07}.
\end{proof}

\begin{corollary}
\label{corthetabded}
The N\'eel wall's phase $\brt$ belongs to $W^{2,\infty}(\R)$ 
\end{corollary}
\begin{proof}
The result follows immediately from the facts that $|\brt|\leq \pi/2$,  $\partial_x \brt \in H^2$ (see Proposition \ref{propNeelw} \hyperref[propc]{\rm{(c)}}) and the Sobolev's inequality: $\| u \|_{L^\infty}^2 \leq 2 \|u \|_{L^2} \| \partial_xu \|_{L^2}$ for all $u \in H^1$.
\end{proof}

\subsection{LLG dynamics}
The time evolution of the magnetization distribution on a ferromagnetic body $\widetilde{\Omega} \subset \R^3$ is governed by the Landau-Lifshitz-Gilbert (LLG) equation \cite{LanLif35,Gilb55,Gilb04}:
\begin{equation}
\label{eqLLG}
\bm_t+ \alpha \bm \times  \bm_t-\gamma \bm \times \bH_{\mathrm{eff}} = 0, 
\end{equation}
where $\bm : \widetilde{\Omega} \times (0,\infty) \to \mathbb{S}^2 \subset \R^3$ is the magnetization field, $\alpha > 0$ is a non-dimensional damping coefficient (Gilbert factor), and $\gamma > 0$ is the (constant) absolute value of the gyromagnetic ratio with dimensions of frequency (see, e.g., \cite{Gilb04}). The effective field, $\bH_{\mathrm{eff}} = \bh - \nabla \E(\bm)$, is the applied field $\bh$ and the negative functional gradient of the micromagnetic energy $\E(\bm)$. If we consider a single magnetic spin $\bm=\bm(t)$ under a constant magnetic field $\bh$ and neglect damping then, the magnetization $m$ will precess about the applied field $\bh$ with a frequency given by $\omega = \gamma |\bh|$.  When the damping is turned on, the vector $\bm$ will spiral down around $\bh$ until $\bm$ and $\bh$ become parallel. The typical relaxation time is $1/(\alpha\omega)$. 

In bulk materials and up to translations, there exists a one-dimensional optimal path connecting antipodal magnetization states known as the Bloch wall. Bloch walls are such that $m_1=0$ and the transition is perpendicular to the transition axis. In this case, the magnetization $\bm$ is divergence-free and the stray field energy vanishes. Now, for an applied external magnetic field $\bh = H {\bf e}_2$  the landscape changes because under the same initial conditions, explicit dynamic solutions are found and they show that the magnetization $\bm$ rotates to develop a non-vanishing $m_1$ component. The latter component implies a rotation of the other magnetization components advancing the domain wall \cite{HuSch98, Melc04}.  

\subsection{LLG wave-type dynamic thin film limit}
Thin films are incompatible with gyrotropic wall motion due to the incompatibility constraint of the in-plane magnetization imposed by the stray field. In this configuration, the competition between energy and dynamic forces becomes singular in the thin field limit. In \cite{CMO07}, an effective suitable limit is considered under the appropriate regime where the oscillatory features of the LLG dynamics are preserved in the limit. It turns out that the effective dynamics depend on the asymptotic regime as $\alpha$ and the relative thickness $\delta/d$ tend to zero. 

For the precise scaling and regime in \cite{CMO07} let $\varepsilon = \sqrt{Q}$ and consider \eqref{reg-Neel} when $\varepsilon \ll {\mathcal Q}$ while ${\mathcal Q} = (2\varepsilon d/\delta)^2 \lesssim 1$
is small but bounded from below. That is, $\varepsilon\sim \delta/d$ can be regarded as the relative thickness. Under these assumptions, we rescale space, time, and energy by 
$$
x\mapsto w x,\quad  t \mapsto t/(\gamma\varepsilon), \quad\text{and }\quad E_\varepsilon = (2/\delta) \E(\bm),
$$
where $w = \delta/(2\varepsilon^2)$. In this scaling, the mean effective field $\bH_{\mathrm{eff}}$ becomes
\[ 
    \bH_{\mathrm{eff}} = -\varepsilon^2\nabla E_\varepsilon(\bm).
\]
Notice that $E_\varepsilon(\bm)$ is given by \eqref{Energy-epsilon}. Therefore, the LLG equation \eqref{eqLLG} becomes, 
\begin{equation}\label{LLG-epsilon}
    \bm_t + \alpha \bm\times \bm_t + \varepsilon \bm \times \nabla E_\varepsilon(\bm) = 0.
\end{equation}
 To derive the effective equation for the in-plane magnetization it is necessary to write down  $E_\varepsilon(\bm)$ in terms of $m=(m_1,m_2)$ and  $m_3$, that is, 
\begin{equation*}
    E_\varepsilon(\bm) = E_0(m) 
    + \frac{1}{2}\fint_0^L\int_\R\Big({\mathcal Q} |\nabla m_3|^2 + \left(\frac{m_3}{\varepsilon}\right)^2\Big) dx 
\end{equation*}
where 
\begin{equation} \label{E_02D}
E_0(m) = \frac{1}{2}\fint_0^L\int_\R\Big(
{\mathcal Q}|\nabla m|^2 + ||\nabla|^\frac{1}{2}{\mathcal H}(m)|^2 + (1-m_2^2)
\Big) dx.
\end{equation}
Notice that for one-dimensional transition layers the energy $E_0$ coincides with the reduced N\'eel wall energy \eqref{limit-varppio}. 

In \cite{CMO07} it is shown that as 
\[
    \varepsilon\to 0 \quad\text{while}\quad 
    \alpha(\varepsilon)/\varepsilon \to \nu
\]
for some positive $\nu$, while keeping ${\mathcal Q}=1$ for every $\varepsilon >0$, there exist a sequence of solution $\bm_\varepsilon$ of \eqref{LLG-epsilon} $L$-periodic in the $x_2$ direction such that the in-plane magnetization $m_\varepsilon$ weakly converges to $m\in {\mathbb S}^1 $, (in the appropriate spaces) a weak solution of 
\begin{equation}\label{LLG-limit}
[\partial_t^2 m + \nu \partial_t m + \nabla E_0(m)]\perp {T_m \mathbb S}^1.  
\end{equation}

Because $E_0(m)$ coincides with the N\'eel wall energy, it is clear that under the appropriate boundary conditions at infinity (e.g. \eqref{limit-varppio}) the static N\'eel wall profile $\bar m=(\cos\bar\theta,\sin\bar\theta)$ is a static solution of \eqref{LLG-limit}. 

\subsection{Main result}

The static N\'eel wall solution and the wave-type dynamic equation \eqref{LLG-limit} are the starting point of the present work.  
We state our main result in terms of the magnetic phase $\theta:\R\times (0,\infty) \to \R$. As function of $\theta(x,t)$, equation \eqref{LLG-limit} with the boundary conditions given by \eqref{varprob} becomes 
\begin{equation}
 \label{reddyneq}
 \left\{ \ \ 
\begin{aligned}
&\partial_t^2 \theta + \nu \partial_t \theta + \nabla \cE(\theta) = 0, \\
&\theta(-\infty,t) =-\pi/2,\quad \theta(\infty,t) =\pi/2,\\
&\theta(x,0) =\theta_0(x) ,\quad \partial_t\theta(x,0) =v_0(x),
\end{aligned}
\right.
\end{equation}
where $(\theta_0,v_0)$ are some initial conditions and the energy $\cE(\theta)$ is as in \eqref{varprob}. After these definitions, we are ready to state our main result.

\begin{theorem}[Orbital stability of the static N\'eel wall]
\label{maintheorem}

Let $\cJ\subset H^1(\R)\times L^2(\R)$ be the set of initial conditions such that the Cauchy problem  \eqref{reddyneq} has a global solution. There exists $\varepsilon > 0$ sufficiently small such that if the pair $(\theta_0, v_0) \in \cJ$ satisfies
\[
\| \theta_0 - \brt \|_{H^1} + \| v_0 \|_{L^2} < \varepsilon,
\]
then, the solution to \eqref{reddyneq} with initial condition $(\theta(x,0), \partial_t \theta(x,0)) = (\theta_0, v_0)$ satisfies
\[
\| \theta (\cdot, t) - \brt(\cdot + \delta) \|_{H^1} \leq C \exp(- \omega t),
\]
for any $t > 0$, where $\delta \in \R$ is some shift and $C, \omega > 0$ are some constants that may depend on $(\theta_0, v_0)$ and $\varepsilon$.
\end{theorem}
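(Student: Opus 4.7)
The plan is to recast the Cauchy problem \eqref{reddyneq} as a first-order abstract evolution equation on the Hilbert space $X = H^1(\R)\times L^2(\R)$ and apply the Sattinger--Lattanzio \emph{et al.} \cite{LMPS16} orbital stability principle. Setting $u = \theta - \brt$ and $v = \partial_t \theta$, the unknown $U=(u,v)^\top$ satisfies
\[
\partial_t U = \cL U + \cN(U), \qquad U(0) = (\theta_0-\brt,\, v_0),
\]
where $\cL$ is the block operator matrix of Section \ref{secspectral}, built from the Hessian $\nabla^2\cE(\brt)$ and the damping $\nu$, and $\cN(U)=(0,\,-[\nabla\cE(\brt+u)-\nabla^2\cE(\brt)u])^\top$ collects the superlinear remainder. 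Since $\brt\in L^\infty$ with $\partial_x\brt,\partial_x^2\brt\in L^\infty$ by Corollary \ref{corthetabded}, Taylor expansion of the pointwise trigonometric terms and continuity of $(-\Delta)^{1/2}$ on the relevant spaces give the quadratic bound $\|\cN(U)\|_X \lesssim \|U\|_X^2$ for $\|U\|_X$ small.

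The next step invokes the spectral stability result (Theorem \ref{mainspthm}) together with the semigroup analysis of Section \ref{secsemigroup}. Translation invariance of \eqref{varprob} forces $\Psi_0:=(\partial_x\brt,\,0)^\top \in \Ker\cL$, and the spectral gap established in Section \ref{secspectral} ensures $\sigma(\cL)\setminus\{0\} \subset \{\lambda: \Re\lambda \le -\omega_0\}$ for some $\omega_0>0$. The associated Riesz projector $P_0$ decomposes $X = \lspan\{\Psi_0\}\oplus X_s$ into $\cL$-invariant closed subspaces, and the Gearhart--Pr\"uss theorem, applied to the uniform resolvent bounds of Section \ref{secsemigroup}, then yields
\[
\|e^{t\cL}P_s\|_{X\to X} \le C e^{-\omega t} \qquad (t\ge 0),
\]
for any $0<\omega<\omega_0$, where $P_s = I-P_0$.

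The nonlinear stage proceeds by modulation with a \emph{constant} shift. Given small initial data, the implicit function theorem applied to the finite-dimensional equation $\delta \mapsto P_0 (\theta_0-\brt(\cdot+\delta),\,v_0)=0$ produces a unique $\delta=\delta(\theta_0,v_0)\in\R$ so that the translated perturbation $W(t):=(\theta(\cdot,t)-\brt(\cdot+\delta),\,\partial_t\theta(\cdot,t))$ has $P_0 W(0)=0$. The Duhamel representation
\[
W(t) = e^{t\cL}W(0) + \int_0^t e^{(t-s)\cL}\cN(W(s))\,ds
\]
then splits into a stable part, exponentially decaying through the bound above, and a $\Psi_0$-component driven purely by the nonlinear term, which is quadratically small and can be re-absorbed into an infinitesimal correction of $\delta$ by the same implicit function scheme iterated in time. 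Combining $\|e^{t\cL}P_s\|\le C e^{-\omega t}$ with the estimate $\|\cN(W)\|_X \le C\|W\|_X^2$ and running a standard contraction argument in the weighted space $\{W:\sup_t e^{\omega t}\|W(t)\|_X<\infty\}$ closes the bound $\|W(t)\|_X \le Ce^{-\omega t}\|W(0)\|_X$, which is exactly the asserted orbital decay since $\|W(0)\|_X \lesssim \|\theta_0-\brt\|_{H^1}+\|v_0\|_{L^2}$.

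The principal obstacle I expect is the control of the nonlocal nonlinearity: bounding expressions of the form $[\sin(\brt+u)-\sin\brt](1+(-\Delta)^{1/2})[\cos(\brt+u)-\cos\brt]$ quadratically in $L^2$ requires Moser-type composition estimates for $f(\brt+u)$ with $f\in C^\infty$, $\brt\in L^\infty$, together with mapping properties of $(-\Delta)^{1/2}$ on $H^1$, and care must be taken that the commutators arising from the $x$-dependent coefficients (governed by $\partial_x\brt$, only in $H^2$) do not spoil the quadratic structure. A secondary subtlety is justifying that the shift $\delta$ may be chosen as a \emph{constant} rather than a time-dependent modulation parameter; this hinges precisely on the fact, supplied by Theorem \ref{mainspthm}, that translations exhaust the neutral directions of $\cL$ with a strict spectral gap to the remainder of $\sigma(\cL)$.
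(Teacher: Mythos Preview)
Your proposal is correct and follows essentially the same strategy as the paper: both recast the problem as a first-order system on $H^1\times L^2$, invoke the spectral stability of Theorem \ref{mainspthm} together with the Gearhart--Pr\"uss semigroup decay of Section \ref{secsemigroup}, establish quadratic bounds on the nonlocal nonlinear remainder via Taylor expansion of the trigonometric terms, and conclude by the Sattinger--Lattanzio \emph{et al.} orbital stability principle. The only cosmetic difference is that the paper packages the nonlinear step as a verification of the three abstract hypotheses of Theorem \ref{LPstability} (linear decay on the projected subspace, smoothness of $\delta\mapsto\phi(\delta)$, and the quadratic remainder bound) rather than sketching the Duhamel/modulation mechanism by hand as you do.
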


\begin{remark}
It is to be noticed that we are not proving the global existence of the solution for a given small initial perturbation. Theorem \ref{maintheorem} states that any eventual initial small perturbation of the static N\'eel profile, if exists, must decay to a translation of it. This type of behavior is also called  \emph{orbital} stability (or stability \emph{in shape}), as initial perturbations decay to an element of the orbit or manifold generated by the static wave which, in this case, is the one-dimensional manifold of translations. The existence of global solutions can be studied using standard semigroup techniques and with the help of the decaying estimates performed in this work; we do not pursue such analysis here. Instead, we focus on the stability problem alone. 
\end{remark}

\begin{remark}
The perturbations considered in Theorem \ref{maintheorem} only affect the in-plane magnetization $m=(m_1,m_2)$ through the phase $\theta$. In the regime where equation \eqref{reddyneq} is derived, perturbations on the out-of-plane component $m_3$ have infinite energy ( see equation \eqref{Energy-epsilon} and recall that $\epsilon\to 0$)  and are not part of the model.
\end{remark}

\section{Strategy of the proof}
\label{sec:strategy}
This work aims to establish the orbital stability of the Neel wall profile, i.e., Theorem \ref{maintheorem}. We address this problem by rephrasing 
equation \eqref{reddyneq} as a two-dimensional ODE system
\begin{equation*}
\partial_t W = F(W), 
\end{equation*}
where $\varphi = \partial_t \theta$,  $W = (\theta,\varphi)$, $W_0 = (u_0,v_0)$,  and $F(W) = (\varphi,-\nu \varphi - \nabla \cE(\theta))$.
In this setting, we apply the result of  Lattanzio {\em et al.} \cite{LMPS16} (Theorem~\ref{LPstability}) to prove orbital stability. The conditions of Lattanzio's theorem are the following: 

\begin{enumerate}[label=(\alph*)]
\item The flow $F$ vanishes on a one-dimensional manifold $\phi$ that contains the origin.

\item The difference between $\phi$ and its linearization at the origin has a supralinear growth in a neighborhood of the latter. 

\item The difference between $F$ and its linearization at $\phi$ has a supralinear growth in a neighborhood of the one-dimensional manifold. 

\item The linearization of the nonlinear ODE system around any point on the one-dimensional manifold $\phi$ has a solution whose projection on a suitable subspace of codimension one decay exponentially. 
\end{enumerate}

Neel walls's translation define the one-dimensional manifold in condition (a), namely $\phi(\delta) = (\brt(\cdot+\delta),0)$.  Because the Neel wall energy is invariant under translation, the zero-level set of $F$ contains the manifold $\phi$. Condition (b) holds because the  N\'eel wall's phase inherited its smoothness to the one-dimensional manifold $\phi$. 
In addition, $\nabla \cE$ is regular enough to grant that condition (c) holds. The complete proof of (a), (b), and (c) are presented in Section \ref{sec:nonlinear_stability}.

The proof that condition (d) holds is the most challenging; most of the paper is devoted to it (sections \ref{seclinearopL} to \ref{secsemigroup}). 
In the proof of condition (d), we are only concerned with the linearized ODE system, 
\begin{equation*}
\partial_t U = \cA^\delta U, \qquad \mbox{where} \qquad \cA^\delta: = \begin{pmatrix} 0 & \Id \\ -\opl^\delta & -\nu\Id \end{pmatrix}, \qquad
\end{equation*}
and  $\opl^\delta$ is the linearization of $\nabla \cE$ around $\brt(\cdot+\delta)$. Because we rely on the semigroup theory for the existence of solutions, we will concentrate on the operator $\cL:=\cL^0$ for the untranslated N\'eel wall phase $\brt$ and its associated block operator $\cA:=\cA^0$, since the whole operator family $\{\cL^\delta \ | \ \delta \in \R \}$ is isospectral \cite{KaPro13}.

Although the semigroup theory holds under rather general conditions, the nonlocal character of $\cA$ poses challenging problems.   
The proof of condition (d) follows by showing, first, that block matrix operator $\cA$ is the generator of a $C_0$-semigroup, and second, that the dynamics perpendicular to the central manifold is exponentially decaying. We have at least two strategies for the former step: apply the generalized \emph{Hille-Yosida theorem}, see \cite{EN00}, or the classical \emph{Lumer-Phillips theorem}, see e.g., \cite{ReRo04}. 
We take the latter approach(Section \ref{secsemigroup}) because the Hille-Yosida theorem requires bounds on the norm for all powers of the resolvent, but in our case, these bounds are hard to prove due to the nonlocal term.

For a linear and closed operator, $\cA:X \to Y$, between two Banach spaces, the resolvent set is defined as 
\[
\rho(\cA) := \{\lambda \in \C \, : \, \cA - \lambda \,\text{ is injective and onto, and } (\cA - \lambda)^{-1} \, \text{is bounded} \, \},
\]
and the spectrum $\sigma(\cA) = \C \backslash \rho(\cA)$ is the complex complement of the resolvent \cite{Kat80}. By Weyl's spectrum splitting~\cite{We10}, $\sigma(\cA) = \ess(\cA) \cup \ptsp(\cA)$ where 
\begin{equation*}
	\begin{aligned}
		\ptsp(\cA) &:= \{ \lambda \in \C\,: \; \cA - \lambda \,\text{ is Fredholm with index zero and non-trivial kernel} \},\\
		\ess(\cA) &:= \{ \lambda \in \C\,: \; \cA - \lambda \,\text{ is either not Fredholm or
			has index different from zero} \}.
\end{aligned}
\end{equation*}
In this splitting the point spectrum $\ptsp(\cA)$  is a discrete set of {\em isolated eigenvalues} \cite{Kat80,KaPro13}.

Because the spectral mapping theorem is not valid for $C_0$-semigroups, see \cite{EN00}, we use {\em Gearhart-Pr\"uss theorem}  to prove the second step in the proof of condition (d). We recall that Gearhart-Pr\"uss theorem states that any $C_0$-semigroup $\{e^{t\cT}\}_{t\geq 0}$ on a Hilbert space $H$ is exponentially decaying if and only if $\cT$'s spectral bound is negative and its resolvent satisfies $\sup_{\Re \lambda > 0} \|(\cT - \lambda)^{-1}\| < \infty$. We present the proof of these condition in Lemma \ref{elcoro} and Lemma \ref{lemF6} for a suitable restriction of the operator $\cA$, respectively.
In our problem, $\sigma(\cA)$ and $\sigma(\cL)$ are linked.  Furthermore,  if $\lambda \in\ptsp(\cA)$, then $-\lambda(\lambda +\nu)\in \sigma(\cL)$, (Subsection \ref{subsec:pointstability}). Therefore, we only need to characterize the spectrum of $\cL$ to localize $\ptsp(\cA)$. Although $\cL$ is a nonlocal operator, its spectrum is easily localized since it is self-adjoint and coercive on the $L^2$-orthogonal complement of $\partial_x \brt$ (Theorem \ref{main_res_L}).  
Despite $\ess(\cA)$ also depends on $\sigma(\cL)$, its localization is not straightforward. We rely on Weyl’s essential spectrum theorem, \cite{KaPro13},  that implies $\ess(\cA)=\ess(\cA_\infty)$ provided $\cA$ is a relatively compact perturbation of an auxiliary operator $\cA_\infty$, (Definition \ref{initial_def} \ref{def_relative_perturbation}).
As $\cA$, the auxiliary operator $\cA_\infty$ has a block structure written in terms of a positive-defined and self-adjoint linear operator $\cL_\infty$. In addition, since $\cL_\infty$ is an invertible operator, the localization of $\cA_\infty$'s complete spectrum is possible. As expected,  $\sigma(A_\infty)$ is contained in the negative real part complex semi-plane. 

Ultimately, due to the structure of the block matrix operators, the relative compactness between $\cA$ and $\cA_\infty$ follows from the operator's $(\cL_\infty-\cL): H^2\to L^2$ compactness. Therefore, we present the spectral analysis of $\cL_\infty$ and $\cL$ in Section \ref{seclinearopL}. 

\section{The linearized operator around the static N\'eel wall's phase}
\label{seclinearopL}

Let $\cL$ be the $L^2$-linearization of the $\nabla\cE$ around the static N\'eel wall's phase $\brt$. In this section, we examine the properties of the operator $\cL$ and locate its spectrum. In addition, we analyze the auxiliary asymptotic operator $\opli$, and  show that the difference $\cL_\infty-\cL:H^2\to L^2$ is a continuous and compact operator. 

The main results in this section are the following:


\begin{theorem}
\label{main_res_L}
    Let $\brt$ be the N\'eel wall phase and the operator $\cL:L^2\to L^2$ with domain $D(\cL)=H^2$ given by   
    \begin{equation}
        \label{defL0}
        \cL u := - \partial^2_xu + \mathcal{S}u - c_\theta u, \qquad u \in D(\cL),
    \end{equation}
    where $\cS:L^2\to L^2$ is a nonlocal operator, with domain $D(\cS)=H^1$, defined as
\begin{equation}
\label{defS}
 \cS u := \sin \brt(1+(-\Delta)^{1/2}) (u\sin \brt), \quad u \in D(\cS),
\end{equation}
and 
\begin{equation}
\label{defctheta}
 c_\theta := \cos \brt (1+(-\Delta)^{1/2}) \cos \brt.
\end{equation}
Then, $\cL$ is a closed densely defined self-adjoint linear operator whose $L^2$-spectrum satisfies  $$\sigma(\cL)\subset \{0\}\cup [\Lambda_0,\infty),$$ for some fixed positive constant $\Lambda_0$.
\end{theorem}

\begin{theorem}
\label{main_res_Linf}
    Let $\cL_\infty:L^2\to L^2$ with domain $D(\cL_\infty)=H^2$ be given by   
    \begin{equation}
        \label{defLinf}
        \cL_\infty u := - \partial^2_xu + (1+(-\Delta)^{1/2}) u.
\end{equation}
Then, $\cL_\infty$ is a closed densely defined self-adjoint invertible linear operator whose $L^2$-spectrum satisfies  
$$\sigma(\cL_\infty)\subset [1,\infty).$$
\end{theorem}
\begin{theorem}
\label{main_res_L-Linf}
    The linear operator $\cL_\infty-\cL:H^2\to L^2$ is continuous and compact. 
\end{theorem}

Theorem \ref{main_res_L-Linf} is a consequence of the Kolmogorov-Riesz Theorem \cite{Kolm31},\cite{Riesz33} that establishes the equivalence between the $L^p$-equicontinuity and $L^p$-uniform decaying, and the precompactness (or totally boundedness) on bounded sets in $L^p$ (Theorem \ref{themKolmogorov}). 
The $L^2$-equicontinuity of the image of $H^2$-bounded sets under $\cL_\infty-\cL$ follows from the continuity of the operator from $H^2$ to $H^1$. The $L^2$-uniform decaying is more complex. 


By introducing a smooth function $g_\delta$ that matches asymptotic behaviors of $\sin\brt$ at plus and minus infinity, we split $\opli-\cL$ as the sum of operators whose coefficients either have compact support or vanish at $\pm \infty$. For most local operators, the latter is enough to prove the $L^2$-uniform decaying property. However, since $\opli-\cL$ involves the nonlocal operator $(-\Delta)^{1/2}$ we require an extra step to show the  $L^2$ decaying property of the image of $H^2$-bounded sets under $[g_\delta,\cH]\partial_x$, where $\cH$ is the Hilbert transform, see Proposition \ref{propononlocal}.


In the Harmonic Analysis literature, it is well-known that the commutator $[f,K]:L^p(\R^n)\to L^p(\R^n)$ is compact for $1<p<\infty$ if $K$ is a Calder\'on-Zygmund integral operator with smooth kernel, and $f\in\cup_{q>1}L^q_{loc}(\R^n)$ such that $f$ has continuous mean oscillation $(CMO(\R^n))$\footnote{In some context\cite{LL22}, $CMO(\R^n)$ is also called $VMO(\R^n)$. However, this notion differs from the original definition \cite{Sa75,DM14,Da02}.}, see \cite{Uc78}. 
In our case, the proof of compactness of $[g_\theta, \cH]\partial_x: H^2\to H^1$ does not follow from the continuity of $\partial_x: H^2\to H^1$ and the the latter result, because $g_\theta$ does not belong to $CMO(\R^n)$. 

In subsection \ref{subsec:L_properties} below, we introduce the basic properties of operators $\cL$ and $\cS$, and in subsection \ref{subsec:L_spec}, we determine explicit bounds for the spectrum of $\cL$ and prove Theorem \ref{main_res_L}. Subsection \ref{subsec:asymptoticL} is devoted to analyzing the auxiliary operator $\cL_\infty$ and the prove of Theorem \ref{main_res_Linf}. In subsection~\ref{subsec:relcompactness}, we prove Theorem \ref{main_res_L-Linf} that, despite being key to showing the spectral stability of operator $\cA$, we believe it is important on its own because it gives an insight into the application of  Weyl's essential spectrum theorem in the context of non-local operators, that up to our knowledge is new.




\subsection{Basic properties}

We begin this section with the definitions of relative boundedness and relatively compact perturbations.  
\begin{definition}\label{initial_def} Let $\cP$ and $\cS$ be two linear operator on a Banach space $X$. We say that: 
\begin{enumerate}[label=(\roman*)]
    \item \label{def_relative_bound} The operator $\cS$ is relatively bounded with respect to $\cP$ (or $\cP$-bounded) if $D(\cP)\subset D(\cS)$ and 
    for every $u\in D(\cP)$
\[
\|\cS u\| \leq a\|u\|+b\|\cP u\|, 
\] 
where $a, b$ are non negative constants. The greatest lower bound $b_0$ of all possible constants $b$ is called the relative bound of $\cS$ with respect to $\cP$ (or the $\cP$-bound of $\cS$).

\item \label{def_relative_perturbation} The linear operator $\cS$ is a relatively compact perturbation of $\cP$ if for some $\lambda \in \res{\cP}$ the operator $(\cP-\cS)(\lambda \Id - \cP)^{-1}:X\to X$ is compact.
 
\end{enumerate}\end{definition}
 
%

\label{subsec:L_properties}

For a function $u\in L^2$, we define the Half-laplacian in terms  of the Fourier transform as follows, 
\[
(-\Delta)^{1/2} u := (|\xi|\widehat{u}(\xi))^\vee.
\]
Reagarded as an operator from $H^1$ to $L^2$, the Half-laplacian is a bounded linear operator.

From harmonic analysis, we know that the Half-laplacian is related to the Hilbert transform. The precise relationship is presented in the following lemma, whose proof can be found in many references(see, e.g., \cite{CCH21,King-v1-09,Ner75}). 


\begin{lemma}
\label{lemma:Hilbert}
    Let $\cH : L^2\to L^2$ be the Hilbert transform given by
    \[
    u\mapsto  \PV \frac{1}{\pi}  \int_\R \frac{u(s)}{x-s} \, ds.
    \]
    Then, $\cH$ is an isometry on $L^2$. Moreover, if $u\in H^1$ we have that 
    \[
    (-\Delta)^{1/2}u= \cH(\partial_x u) = \partial_x \cH u.
    \]
\end{lemma}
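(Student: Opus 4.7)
The plan is to reduce every claim to a Fourier multiplier identity. First I would identify the Fourier symbol of the Hilbert transform. Since $\cH u(x) = \frac{1}{\pi}\,\PV \int_{\R} \frac{u(s)}{x-s}\, ds$ is the convolution of $u$ with the tempered distribution $\PV(1/\pi x)$, one computes, by approximating with the truncated kernels $\mathbf{1}_{|x|>\varepsilon}/(\pi x)$ and passing to the limit as $\varepsilon \downarrow 0$ in $\mathcal{S}'(\R)$, the classical identity
\[
\widehat{\cH u}(\xi) = -i\,\sgn(\xi)\,\hat{u}(\xi),
\]
valid first for Schwartz $u$ and then, by density, for every $u \in L^2$.

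From this representation the isometry statement is immediate: since $|{-i\,\sgn(\xi)}| = 1$ almost everywhere, Plancherel's theorem yields $\|\cH u\|_{L^2} = \|\widehat{\cH u}\|_{L^2} = \|\hat{u}\|_{L^2} = \|u\|_{L^2}$ for every $u \in L^2$.

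For the two identities with the half-Laplacian, assume $u \in H^1$ so that $\partial_x u$, $\cH u$ and $(-\Delta)^{1/2} u$ all lie in $L^2$. Using $\widehat{\partial_x u}(\xi) = i\xi\,\hat{u}(\xi)$ together with the multiplier formula above gives
\[
\widehat{\cH(\partial_x u)}(\xi) = -i\,\sgn(\xi)\cdot i\xi\,\hat{u}(\xi) = |\xi|\,\hat{u}(\xi),
\]
and symmetrically
\[
\widehat{\partial_x \cH u}(\xi) = i\xi \cdot(-i\,\sgn(\xi))\,\hat{u}(\xi) = |\xi|\,\hat{u}(\xi),
\]
both of which coincide with $\widehat{(-\Delta)^{1/2} u}(\xi)$. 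Since the Fourier transform is injective on $L^2$, this yields the pointwise identities $(-\Delta)^{1/2} u = \cH(\partial_x u) = \partial_x \cH u$ in $L^2$.

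The only subtle step is the first one, namely extracting the symbol $-i\,\sgn(\xi)$ from a convolution whose kernel is not in $L^1$ and must be interpreted distributionally; the remaining arguments are standard consequences of Plancherel's theorem and the multiplicative structure of Fourier multipliers.
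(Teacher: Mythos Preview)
Your proof is correct. The paper itself does not prove this lemma at all: it explicitly states the result ``without proof, inasmuch as the latter can be found in many references'' and cites \cite{CCH21,King-v1-09,Ner75}. Your Fourier multiplier argument is exactly the standard proof one finds in those references, so there is nothing to compare.
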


The following proposition summarizes the basic properties of the linearized operator $\cL$ and the N\'eel wall's phase, which have already been proved in \cite{CMO07}.

\begin{proposition}
\label{propL0}
The operator $\cL$ and the static N\'eel wall's phase $\brt$ satisfy:
 \begin{itemize}
\item[(a)]\label{propaa} $\partial_x \brt \in D(\cL)$ with $\cL \partial_x \brt = 0$. 
 \item[(b)]\label{propbb} For all $f \in L^2$ such that $f \perp \partial_x \brt$ in $L^2$ there exists a solution $u \in H^2$ to the equation $\cL u = f$. The solution is unique up to a constant multiple of $\partial_x \brt$.
\item[(c)]\label{propcc} There exists a uniform constant $\Lambda_0 > 0$ such that if $u \in H^1$ and $\langle u, \partial_x \brt \rangle_{L^2} = 0$, then
\begin{equation}
 \label{L0bound}
 \langle \cL u, u\rangle_{L^2} \geq \Lambda_0 \|u\|_{L^2}^2.
\end{equation}
\item[(d)]\label{propdd} Let $f \in \{\partial_x \brt\}^\perp \subset L^2$. Then the equation $\cL u = f$ has a strong solution $u \in H^2$, unique up to a constant multiple of $\partial_x \brt$. Moreover, if $u \in \{\partial_x \brt\}^\perp$, then
\begin{equation}
 \label{H2bound}
 \|u\|_{H^2} \leq C \|f\|_{L^2},
\end{equation}
for some $C > 0$.
 \end{itemize}
\end{proposition}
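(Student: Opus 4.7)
The plan is to verify each of the four properties in turn, essentially translating the relevant results of \cite{CMO07} to the notation of this excerpt and filling in routine bootstrapping.

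For \hyperref[propaa]{(a)}, I would differentiate the Euler-Lagrange equation \eqref{ELeq} once in $x$ and rearrange:
\[
-\partial_x^2(\partial_x\brt) + \sin\brt(1+(-\Delta)^{1/2})(\sin\brt\,\partial_x\brt) - \cos\brt(1+(-\Delta)^{1/2})(\cos\brt)\,\partial_x\brt = 0,
\]
which is exactly $\cL(\partial_x\brt)=0$. The validity of the differentiation and the membership $\partial_x\brt\in D(\cL)=H^2$ follow from Proposition \ref{propNeelw}\hyperref[propc]{(c)}.

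For \hyperref[propcc]{(c)}, I would first identify $\langle \cL u, u\rangle_{L^2}$ with $\Hess\mathcal{E}(\brt)\langle u,u\rangle$ by a direct second-variation computation on \eqref{varprob}, using Corollary \ref{cor:operatorS} to rewrite the $\cS$-term as $b[s_\theta u, s_\theta u]$. Proposition \ref{propNeelw}\hyperref[propd]{(d)} then gives the lower bound $\langle \cL u,u\rangle \geq \|u\partial_x\brt\|_{L^2}^2 + \Re b[u s_\theta, u s_\theta]$ on centered variations $\{u\in H^1 : u(0)=0\}$. The key step is to transfer this lower bound to the $L^2$-orthogonal complement $\{\partial_x\brt\}^\perp$: since $\partial_x\brt>0$ is bounded below on any compact set (Proposition \ref{propNeelw}\hyperref[propa]{(a)}) and $s_\theta(x)\to\pm 1$ as $x\to\pm\infty$, the weighted $L^2$ term controls $u$ on compact sets while the $\dot H^{1/2}$-equivalent form $b[s_\theta u, s_\theta u]$ controls $u$ at infinity. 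A standard compactness/contradiction argument (with a minimizing sequence that, by Rellich-Kondrachov, converges strongly on compacts and weakly to a kernel element that must be a multiple of $\partial_x\brt$, hence zero by orthogonality) then yields the uniform bound $\langle \cL u,u\rangle \geq \Lambda_0\|u\|_{L^2}^2$.

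For \hyperref[propbb]{(b)} and \hyperref[propdd]{(d)}, I would first upgrade the $L^2$-coercivity to $H^1$-coercivity on $\{\partial_x\brt\}^\perp$: decomposing $\langle \cL u, u\rangle = \|\partial_x u\|_{L^2}^2 + b[s_\theta u, s_\theta u] - \langle c_\theta u, u\rangle_{L^2}$ and using $b[\cdot,\cdot]\geq 0$ together with the uniform bound $\|c_\theta\|_\infty<\infty$ from Corollary \ref{corcsbded} and \hyperref[propcc]{(c)}, one obtains $\|u\|_{H^1}^2 \leq C\langle \cL u, u\rangle_{L^2}$. The bilinear form is continuous on $H^1\cap\{\partial_x\brt\}^\perp$, so Lax-Milgram produces a unique weak solution $u$ in this subspace for each $f\in\{\partial_x\brt\}^\perp$. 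To upgrade to $H^2$ and obtain \eqref{H2bound}, rewrite the equation as $-\partial_x^2 u = f - \cS u + c_\theta u$; the estimate $\|\cS u\|_{L^2} \leq C\|u\|_{H^1}$ (following from $s_\theta, \partial_x s_\theta\in L^\infty$ and the mapping property of $(-\Delta)^{1/2}:H^1\to L^2$) shows the right-hand side is in $L^2$ with norm $\lesssim \|f\|_{L^2}$, yielding $u\in H^2$ and the desired bound.

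The main obstacle is the transfer step in \hyperref[propcc]{(c)} from the centering condition $u(0)=0$ in Proposition \ref{propNeelw}\hyperref[propd]{(d)} to the orthogonality condition $\langle u,\partial_x\brt\rangle_{L^2}=0$, since the weight $\partial_x\brt$ is not bounded away from zero globally, so neither term in the Hessian bound alone controls $\|u\|_{L^2}$ uniformly; the argument must exploit the different regions of strength of the two terms, and the spectral gap $\Lambda_0$ emerges only after excluding the one-dimensional kernel.
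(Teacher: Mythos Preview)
The paper does not prove this proposition directly: its entire proof is the single sentence ``The proof follows from Lemmata 4 and 5, together with Proposition 1 in \cite{CMO07}.'' So there is no argument in the paper to compare against; you are effectively reconstructing the content of those cited results.

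Your sketch is a reasonable reconstruction and hits the right ideas: differentiating the Euler--Lagrange equation for \hyperref[propaa]{(a)}, identifying $\langle \cL u,u\rangle$ with the Hessian and invoking the strict-minimality bound for \hyperref[propcc]{(c)}, and Lax--Milgram plus elliptic bootstrapping for \hyperref[propbb]{(b)}/\hyperref[propdd]{(d)}. The one place that deserves more care is exactly the step you flag as the main obstacle: passing from the centering condition $u(0)=0$ in Proposition~\ref{propNeelw}\hyperref[propd]{(d)} to the orthogonality condition $u\perp\partial_x\brt$. Your proposed compactness/contradiction argument is plausible but as written is underspecified: a minimizing sequence $u_n$ with $\|u_n\|_{L^2}=1$ and $\langle \cL u_n,u_n\rangle\to 0$ need not be bounded in $H^1$ a priori (you would first have to use the structure of the quadratic form to extract an $H^1$ bound, which is not automatic since the $c_\theta$ term has the wrong sign), and the Rellich step only gives local strong convergence, so you must also rule out loss of mass at infinity using that $s_\theta\to\pm1$ and $b[\cdot,\cdot]\gtrsim\|\cdot\|_{L^2}^2$ there. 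These are fixable, but the argument is more delicate than the one line you give it. Note also that you cannot lean on the self-adjointness of $\cL$ or the location of $\ess(\cL)$ here, since in the paper's logical order those results (Theorem~\ref{thmLselfadj} and the $\cL_\infty$ analysis) come \emph{after} this proposition.
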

\begin{proof}
The proof follows from Lemmata 4 and 5, together with Proposition 1 in \cite{CMO07}. 
\end{proof}

For notational convenience, we denote
$s_{\theta} := \sin \brt$.
It can be shown\cite{CMO07}, that $s_\theta$ and $c_\theta$ both are real, smooth and bounded for in all $\R$ with $c_\theta \in H^2$. Moreover, 
\begin{corollary}
\label{corcsbded}
There exists a uniform constant $C > 0$ such that  
\begin{equation}
\label{boundscs}
\|c_\theta\|_{W^{1,\infty}}<C, \qquad  \|s_\theta\|_{W^{2,\infty}}<C.
\end{equation}
\end{corollary}
\begin{proof}
The proof follows directly from Corollary \ref{corthetabded} and the regularity of $c_{\theta}$ and $s_{\theta}$.
\end{proof}

The following lemma shows that the nonlocal operator $\cS$ is symmetric and its is related with the sesquilinear form  \eqref{defbilinearB}. 
 
\begin{lemma}\label{cor:operatorS} Let $\cS:L^2\to L^2$ with $D(\cS) = H^1$ be defined as in \ref{defS}. Then, $\cS$ is a symmetric operator and 
\[\pld{\cS u}{v} = b[s_{\theta}u,s_{\theta}v],
\]
for every $u,v \in D(\cS)$.
\end{lemma}
\begin{proof}Let $u, v \in H^1$. 
By Plancherel's theorem, we have that
\[
 \begin{aligned}
  \langle \cS u, v \rangle_{L^2} 
  = \int_\R (1 + |\xi|) \widehat{(s_{\theta} u)}(\xi) \widehat{(s_{\theta} v)}(\xi)^* \, d\xi
  = b[s_{\theta}u,s_{\theta}v].
 \end{aligned}
\]
 The symmetry of $\cS$ follows because  $H^1\subset L^2$ densely and by the hermiticity of $b$, we get that
\[
\pld{\cS u}{v} = b[s_{\theta}u, s_{\theta}v] =b[s_{\theta}v, s_{\theta}u]^* =\pld{\cS v}{u}^* =\pld{u}{\cS v}.
\]
\end{proof}


We finish this subsection by showing  self-adjointness of the operator $\cL$. 


\begin{lemma}
\label{thmLselfadj}
The operator $\cL : L^2 \to L^2$ with domain $D(\cL) = H^2$ is self-adjoint and closed.
\end{lemma}
\begin{proof}
First, note that $\cL$ is clearly a symmetric operator, because its domain is dense in $L^2$ and there holds
\[
\begin{aligned}
\langle \cL u, v \rangle_{L^2} = 
\langle  u, - \partial_x^2 v \rangle_{L^2} + \langle  u, \cS v \rangle_{L^2} + \langle  u, c_{\theta} v \rangle_{L^2}
= \langle u, \cL v \rangle_{L^2},
\end{aligned}
\]
for all $u, v \in H^2$ which follows 
by an integration by parts, an application of Lemma \ref{cor:operatorS} and the fact that $c_{\theta}$ is real.

It is well-known that for every $u \in H^2$ there holds the estimate 
\begin{equation}
\label{Katoineq}
\| \partial_xu \|_{L^2} \leq k \| \partial^2_xu \|_{L^2} + \frac{2}{k} \| u \|_{L^2},
\end{equation}
for any arbitrary $k > 0$ (see Kato \cite{Kat80}, p. 192). Let 
\[
 \left\{
\begin{aligned}
&\tcS : L^2 \to L^2,\\
&D(\tcS) = H^1,\\
& \tcS u := s_{\theta} (-\Delta)^{1/2} (s_{\theta} u), \quad u \in D(\tcS),
 \end{aligned}
 \right.
\]
so that $\cS = s_{\theta}^2 \Id + \tcS$. Following the arguments of Lemma \ref{cor:operatorS}, it follows that $\tcS$ is a symmetric operator. By Corollary \ref{corcsbded}, 
there exists a constant $C_0 > 0$ such 
$\|\partial_xs_{\theta} \|_\infty \leq C_0$. Therefore,
\[
\begin{aligned}
\| \tcS u \|_{L^2} 
&\leq \left( \int_\R | (-\Delta)^{1/2} (s_{\theta}(x) u) |^2 \, dx \right)^{1/2} = \left( \int_\R |\xi|^2 | \widehat{(s_{\theta} u)}(\xi) |^2 \, d\xi \right)^{1/2}\\
&\leq \| \partial_x( s_{\theta} u ) \|_{L^2} \leq \| \partial_xs_{\theta} \|_\infty \| u \|_{L^2} + \| s_{\theta} \|_\infty \| \partial_xu \|_{L^2} \leq C_0  \| u \|_{L^2} + \| \partial_xu \|_{L^2},
\end{aligned}
\]
and by  \eqref{Katoineq} we get
\[
\| \tcS u \|_{L^2} \leq k \| -\partial^2_xu \|_{L^2} + \Big( C_0 + \frac{2}{k}\Big) \| u \|_{L^2},
\]
for all $u \in H^2$ and any arbitrary $k > 0$. Because $D(-\partial_x^2) = H^2 \subset D(\tcS) = H^1$ 
we conclude that the symmetric operator $\tcS$ is relatively bounded with respect to $-\partial_x^2$, and its relative bound is zero. Consequently, we may apply Kato-Rellich's theorem (see \cite{ReedS2}, Theorem X.12, p. 162) to conclude that the operator $\tcS - \partial_x^2 : L^2 \to L^2$ with domain $D( \tcS - \partial_x^2) = D(- \partial_x^2) = H^2$ is self-adjoint.

Now, let us write $\cL = - \partial_x^2 + \cS - c_{\theta} \Id = - \partial_x^2 + \tcS + \beta \Id$, where $\beta := s_{\theta}^2 - c_{\theta}$ is a bounded real smooth coefficient. Clearly,
\[
\| \beta u \|_{L^2} \leq \| \beta \|_\infty \| u \|_{L^2} \leq \| \beta \|_\infty \| u \|_{L^2} + k \| (\tcS - \partial_x^2) u \|_{L^2},
\]
for all $u \in H^2$ and for any $k > 0$. Since $D(\tcS - \partial_x^2) = H^2 \subset D(\beta \Id) = L^2$, we conclude that the symmetric operator $\beta \Id$ is $(\tcS - \partial_x^2)-$bounded with relative bound equal to zero. Upon application of Kato-Rellich's theorem, we conclude that the operator $\cL = - \partial_x^2 + \tcS + \beta \Id$ with domain $D(\cL) = H^2$  is self-adjoint. 

Finally, the closeness of $\cL$ follows because every self-adjoint operator is closed. 
\end{proof}

\subsection{The spectrum of $\cL$}
\label{subsec:L_spec}
The operator $\cL$ is a self-adjoint operator by Lemma \ref{thmLselfadj}, and its $L^2$-spectrum is real. From Proposition \ref{propL0}, $\partial_x \brt \in L^2$ is an eigenfunction of $\cL$ associated with the eigenvalue $\lambda = 0$. 
Moreover, the kernel of $\cL$ is spanned by $\partial_x \brt$ and has dimension one. Otherwise, there exists $u \in H^2 \cap \ker (\cL)$, $u \neq 0$, with $u = u_\perp + \alpha \partial_x \brt$ for some $\alpha \in \C$ where $\pld{u_\perp}{\partial_x \brt}=0$. By Proposition \ref{propL0} \hyperref[propcc]{\rm{(c)}}, 
\[
0 = \langle \cL(u_\perp + \alpha \partial_x \brt), u_\perp + \alpha \partial_x \brt \rangle_{L^2} = \langle \cL u_\perp, u_\perp \rangle_{L^2} \geq \Lambda_0 \|u_\perp\|_{L^2}^2,
\]
so that $u_\perp = 0$, a contradiction.  Therefore, the geometric multiplicity of $\lambda = 0$ is one. Since in a Hilbert, the algebraic and geometric multiplicities coincide  for self-adjoint  (see Kato \cite{Kat80}, p. 273), and we get the following result: 


\begin{lemma}
\label{corzeroL0}
 $\lambda = 0$ is a simple eigenvalue of the operator $\cL: L^2 \to L^2$, with eigenfunction $\partial_x \brt \in D(\cL) = H^2$. Moreover, $0\in\ptsp(\cL)$.
\end{lemma}


Now, we give:

\begin{proof}[\bf Proof of Theorem \ref{main_res_L}]
By Theorem \ref{thmLselfadj} $\cL$ is a closed densely defined self-adjoint linear operator. It remains to localize its $L^2$-spectrum.

Let $L^2=\lspan{\{\partial_x \brt\}}\oplus L^2_\perp$ where $L^2_\perp$ is $\partial_x\brt$'s $L^2$-orthogonal complement.
 The operator $\cL$ is self-adjoint; hence its spectrum is real. In addition, by Lemma \ref{corzeroL0}, $\lambda = 0$ is an isolated simple eigenvalue, and  from 
the spectral decomposition theorem  $\sigma(\cL)_{|L^2_\perp} = \sigma(\cL) \backslash  \{0\}$ (see Theorem III-6.17, p. 178, in Kato \cite{Kat80}).

Now, we claim that
\[
 \left\{
 \begin{aligned}
&\cL_{|L^2_\perp} : L^2_\perp \to L^2_\perp,\\
&D(\cL_{|L^2_\perp}) = D(\cL) \cap L^2_\perp = H^2 \cap L^2_\perp,\\
& \cL_{|L^2_\perp} u := \cL u, \quad u \in D(\cL_{|L^2_\perp}).
 \end{aligned}
 \right.
 \]
is also self-adjoint.
 
 Clearly, $\cL_{|L^2_\perp}$ is symmetric because $\cL$ is symmetric and $D(\cL_{|L^2_\perp})\subset L^2_\perp$ densely. 
 In order to show that $\cL_{|L^2_\perp}$ is self-adjoint it suffices to show that $(\cL_{|L^2_\perp} \pm i)(D(\cL) \cap L^2_\perp) = L^2_\perp$ (see, e.g., Theorem VIII.3, p. 256, in \cite{ReedS1}). Let $\cP_0$ the orthogonal projection from $L^2$ to $\lspan \{\partial \brt\}$. We already know that $\cL \pm i : D(\cL) \to L^2$ is surjective because $\cL$ is self-adjoint, and $\cL$ and $\cP_0$ commute. Therefore, for $v \in L^2_\perp$ there exist elements $u_\pm \in D(\cL) = H^2$ such that $(\cL \pm i ) u_\pm = v$. Thus,  $(\Id - \cP_0) u_\pm \in L^2_\perp$, and
 \[
 (\cL \pm i)(\Id - \cP_0) u_\pm 
 = v - \cP_0 (\cL \pm i ) u_\pm\\= (\Id - \cP_0) v = v
 \]
That is, $(\cL_{|L^2_\perp} \pm i ) : D(\cL) \cap L^2_\perp \to L^2_\perp$ is surjective, and $\cL_{|L^2_\perp}$ is self-adjoint as claimed.
 
 Finally, from Rayleigh's formula for semi-bounded self-adjoint operators (\cite{Kat80}, p. 278) and Proposition \ref{propL0} \hyperref[propcc]{\rm{(c)}}, we have 
 \[
 \langle \cL_{|L^2_\perp} u, u\rangle_{L^2} = \langle \cL u, u\rangle_{L^2} \geq \Lambda_0 \|u\|_{L^2}^2,
 \]
 for all $u \in D(\cL) \cap L^2_\perp = H^2 \cap \{ \partial_x \brt \}^{\perp}_{L^2}$.  Therefore, $\sigma(\cL_{L^2_\perp}) \subset [\Lambda_0, \infty)$ and Kato's decomposition theorem yields $\sigma(\cL)_{L^2} \subset \{ 0 \} \cup [\Lambda_0, \infty)$ as claimed.

\end{proof}

\subsection{The asymptotic operator $\cL_\infty$}
\label{subsec:asymptoticL}
This subsection examines the operator $\opli$ defined in Theorem $\ref{main_res_L-Linf}$. By replacing $\cL$'s coefficients with their limits at $x \to \pm \infty$, we obtain two limiting operators $\cL_{+\infty}$ and $\cL_{-\infty}$. In the present case,  $\cL_{+\infty}=\cL_{-\infty}=\opli$. 

Let $a_\infty[\cdot, \cdot] : H^1 \times H^1 \to \C$ be the sesquilinear form 
\begin{equation*}
a_\infty[u,v] 
:= \langle \partial_xu, \partial_xv \rangle_{L^2} + b[u,v],
\end{equation*}
where $b[\cdot,\cdot]$ is as  in \eqref{defbilinearB}.
For $f\in L^2$, the weak formulation of the nonhomogeneous linear problem  
\begin{equation}
\label{equLinfty}
\cL_\infty u = f,
\end{equation} 
is given by 
$a_\infty[u,v] = \langle \cL_\infty u, v \rangle_{L^2} = \langle f, v \rangle_{L^2}$ 
for every $v \in H^1$.  In terms of the Fourier transform, it is straightforward that 
the bilinear form $a_\infty[\cdot,\cdot]$ is uniformly elliptic and continuous in $H^1$.

\begin{lemma}
\label{lemsolLinf}
For every $f \in L^2$ there exists a unique solution $u \in H^2$ to \eqref{equLinfty}. Moreover, 
\begin{equation}
\label{estufH2}
\| u \|_{H^2} \leq \| f \|_{L^2}.
\end{equation}
\end{lemma}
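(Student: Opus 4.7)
The operator $\cL_\infty$ has constant coefficients, so it is diagonalized by the Fourier transform, with symbol
\[
p(\xi) \;:=\; 1 + |\xi| + \xi^2.
\]
My plan is to exploit this directly. Using the elementary inequality $|\xi| \leq \tfrac{1}{2}(1+\xi^2)$, one obtains the two-sided bound
\[
1 + \xi^2 \;\leq\; p(\xi) \;\leq\; \tfrac{3}{2}(1+\xi^2),
\]
so in particular $p(\xi)\geq 1$ and $p$ is comparable to the Bessel-type symbol of $H^2$.

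For existence, given $f \in L^2$, I would simply define $u$ by prescribing its Fourier transform,
\[
\hat u(\xi) \;:=\; \hat f(\xi)/p(\xi).
\]
Since $p \geq 1$ we have $|\hat u| \leq |\hat f|$, so $\hat u \in L^2$; moreover the upper bound on $p$ gives
\[
(1+\xi^2)\,|\hat u(\xi)| \;=\; \frac{1+\xi^2}{p(\xi)}\,|\hat f(\xi)| \;\leq\; |\hat f(\xi)|,
\]
so that $(1+\xi^2)\hat u \in L^2$, i.e., $u \in H^2$. Plancherel's identity then yields $\|u\|_{H^2} \leq C\|f\|_{L^2}$, which is \eqref{estufH2}. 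By construction $p(\xi)\hat u(\xi) = \hat f(\xi)$, which is precisely $\cL_\infty u = f$ as an identity in $L^2$. Uniqueness is immediate from the same symbol: if $w \in H^2$ satisfies $\cL_\infty w = 0$, then $p(\xi)\hat w(\xi) = 0$ a.e.\ in $\xi$, and since $p \geq 1 > 0$ this forces $\hat w \equiv 0$, hence $w = 0$.

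A fully equivalent route, more consistent with the variational setup already in place, is to apply the Lax–Milgram theorem to the weak formulation \eqref{weakequLinfty}: by Lemma \ref{lemAH1}, $a_\infty[\cdot,\cdot]$ is bounded, symmetric, and coercive on $H^1$, while $v \mapsto \langle f, v\rangle_{L^2}$ is a bounded antilinear functional on $H^1$. This produces a unique weak solution $u \in H^1$, and the regularity upgrade $u \in H^2$ together with the estimate \eqref{estufH2} is then recovered by the Fourier argument above. I do not anticipate any substantive obstacle here, since the constant-coefficient structure of $\cL_\infty$ reduces both existence, uniqueness, and the $H^2$ estimate to the elementary two-sided bound on $p(\xi)$.
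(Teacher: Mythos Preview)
Your proof is correct and essentially the same as the paper's. The paper first invokes Lax--Milgram (via Lemma \ref{lemAH1}) to obtain a unique weak solution $u\in H^1$ and then passes to Fourier space to deduce $(1+|\xi|+\xi^2)\hat u=\hat f$ a.e.\ and the $H^2$ bound, whereas you go straight to the Fourier symbol to define $u$ and obtain existence, uniqueness, and the estimate in one stroke; since you also describe the Lax--Milgram route explicitly, the two arguments are effectively identical.
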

\begin{proof}
Since $a_\infty[\cdot,\cdot]$ is uniformly elliptic and continuous in $H^1$, for each $f \in L^2$ there exists a unique weak solution $u \in H^1$ to \eqref{equLinfty} by the Lax-Milgram theorem. Hence,
\[
\langle \partial_xu, \partial_x\varphi \rangle_{L^2} + \langle (1+(-\Delta)^{1/2})u, \varphi \rangle_{L^2} = \langle f, \varphi \rangle_{L^2}.
\]
for any test function $\varphi \in C_0^\infty(\R)$, and by Plancherel's identity we get
\[
\int_\R \big[ (1 + |\xi| + \xi^2) \widehat{u}(\xi) - \widehat{f}(\xi) \big] \widehat{\varphi}(\xi)^* \, d\xi = 0,
\]
 for all $\varphi \in C_0^\infty(\R)$. Therefore, $(1 + |\xi| + \xi^2) \widehat{u}(\xi) = \widehat{f}(\xi)$ a.e. in $\xi \in \R$, and
\[
\| u \|_{H^2}^2 = \int_\R (1 + \xi^2)^2 |\widehat{u}(\xi)|^2 \, d\xi = \int_\R \Big( \frac{1+\xi^2}{1+|\xi|+\xi^2}\Big)^2 |\widehat{f}(\xi)|^2 \, d\xi \leq \|f \|_{L^2}^2.
\] 
\end{proof}

Now, we prove Theorem \ref{main_res_Linf}
\begin{proof}[\bf Proof of Theorem \ref{main_res_Linf}]
By definition, $\cL_\infty$ is densely defined in $L^2$, and its invertibility follows from Lemma \ref{lemsolLinf}. As in the proof of Lemma \ref{thmLselfadj}, the self-adjointness of $\opli$ follows by Kato-Rellich Theorem, and $\sigma(\cL_\infty) \subset \R$. Moreover, $\cL_\infty$ is semi-bounded since  $\langle \cL_\infty u, u \rangle_{L^2} = a_\infty[u,u] \geq \| u \|_{H^1}^2 \geq \| u \|_{L^2}^2$ for $u \in D(\cL_\infty) = H^2$. Finally, using Lemma \ref{lemsolLinf}, we get
\[
\inf_{0 \neq v \in D(\cL_\infty)} \frac{\langle \cL_\infty v, v \rangle_{L^2}}{\| v \|_{L^2}^2} = \inf_{0 \neq v \in D(\cL_\infty)} \frac{a_\infty[v,v]}{\| v \|_{L^2}^2} \geq 1,
\] 
and by Rayleigh's spectral bound for semi-bounded self-adjoint operators in Hilbert spaces (cf. \cite{Kat80}, p. 278) we conclude that $\inf \sigma(\cL_\infty)>1$.
\end{proof}

\begin{remark}
\label{lemressol}
By definition of the resolvent set and Theorem \ref{main_res_Linf}, for any 
$\mu \in \C \backslash [1,\infty) \subset \rho(\cL_\infty)$, there exists $u\in H^2$ solution of $(\cL_\infty - \mu) u = f$. Moreover, arguing as in the proof of Lemma~\ref{lemsolLinf}, there exists a constant $C=C(\eta)>0$ such that 
\begin{equation}
\label{H2est}
\| u \|_{H^2} \leq C(\mu) \| f \|_{L^2}.
\end{equation}
\end{remark}

\subsection{Relative compactness}
\label{subsec:relcompactness} The goal of this subsection is the proof of Theorem \ref{main_res_L-Linf}.
We begin by establishing the continuity of $\opli-\opl$ as an operator from $H^2$ to $H^1$.

\begin{lemma}
\label{themcontinuity}
$\cL_\infty - \cL$ continuously maps $H^2$ into $H^1$.
\end{lemma}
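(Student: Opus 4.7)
The plan is to write out $(\cL_\infty-\cL)u$ explicitly, split it into a local multiplicative part and a nonlocal part, and bound the nonlocal part by trading one factor of $(-\Delta)^{1/2}$ for a derivative via the Hilbert transform identity (Lemma \ref{lemma:Hilbert}), which converts the difficulty into a commutator-type estimate with the smooth, bounded coefficient $s_\theta$.

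Concretely, using the identity $1-s_\theta^2=\cos^2\brt$, one rewrites
\[
(\cL_\infty-\cL)u \;=\; (\cos^2\brt+c_\theta)\,u \;+\; \bigl[(-\Delta)^{1/2}u - s_\theta(-\Delta)^{1/2}(s_\theta u)\bigr].
\]
The first summand is controlled by Corollaries \ref{corthetabded} and \ref{corcsbded}: $\cos^2\brt+c_\theta$ is bounded in $L^\infty$ with derivative in $L^\infty$, so multiplication by it continuously maps $H^2\subset H^1$ into $H^1$ with norm bounded by a universal constant. For the nonlocal bracket, invoke $(-\Delta)^{1/2}=\cH\partial_x$ to get
\[
(-\Delta)^{1/2}u - s_\theta(-\Delta)^{1/2}(s_\theta u) \;=\; \cos^2\brt\cdot\cH(\partial_x u) \;-\; s_\theta[\cH,s_\theta](\partial_x u) \;-\; s_\theta\,\cH(\partial_x s_\theta\cdot u),
\]
after using $\cH(s_\theta v)=s_\theta\cH v+[\cH,s_\theta]v$ with $v=\partial_x u$.

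Each of the three pieces is then controlled separately. For the first, $\cH$ is an $L^2$-isometry and $\partial_x\cH(\partial_x u)=\cH(\partial_x^2 u)$, so $\cH(\partial_x u)\in H^1$ whenever $u\in H^2$, and multiplication by the smooth bounded $\cos^2\brt$ preserves $H^1$. For the third, $\partial_x s_\theta\cdot u\in H^1$ because $\partial_xs_\theta\in L^\infty$ and $\partial_x^2 s_\theta\in L^\infty$ (Corollary \ref{corcsbded}), so $\cH(\partial_x s_\theta\cdot u)\in H^1$ and multiplication by $s_\theta$ preserves $H^1$. The delicate term is the commutator $s_\theta[\cH,s_\theta](\partial_x u)$: here one uses the Calder\'on-type commutator estimate $\|[\cH,a]v\|_{L^2}\le C\|a'\|_{L^\infty}\|v\|_{L^2}$, applied with $a=s_\theta$ (and then with $a=\partial_x s_\theta$ after differentiating), together with the identity
\[
\partial_x\bigl([\cH,s_\theta](\partial_x u)\bigr) \;=\; [\cH,\partial_x s_\theta](\partial_x u) \;+\; [\cH,s_\theta](\partial_x^2 u),
\]
yielding $\|s_\theta[\cH,s_\theta](\partial_x u)\|_{H^1}\le C\|u\|_{H^2}$.

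Summing the three contributions gives a constant $C>0$, depending only on $\|s_\theta\|_{W^{2,\infty}}$, $\|c_\theta\|_{W^{1,\infty}}$, and the $L^2$-norm of $\cH$, such that $\|(\cL_\infty-\cL)u\|_{H^1}\le C\|u\|_{H^2}$ for all $u\in H^2$, which is the claim. The main obstacle is the commutator $[\cH,s_\theta]$: without the Calder\'on estimate one would be stuck trying to control a principal value integral whose kernel is not integrable, so this is where the Lipschitz regularity of $s_\theta$ (provided by $\partial_x\brt\in L^\infty$) is essential.
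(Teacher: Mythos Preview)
Your argument is correct, but it takes a considerably more elaborate route than the paper's. The paper does not exploit any cancellation between the two nonlocal terms: it simply bounds each of the three summands in
\[
(\cL_\infty-\cL)u = (1+(-\Delta)^{1/2})u \;-\; s_\theta(1+(-\Delta)^{1/2})(s_\theta u) \;+\; c_\theta u
\]
separately. The elementary Fourier inequality $(1+|\xi|)^2\le 2(1+\xi^2)^2$ gives $\|(1+(-\Delta)^{1/2})v\|_{H^1}^2\le 2\|v\|_{H^2}^2$, and since multiplication by $s_\theta$ (with $s_\theta,\partial_x s_\theta,\partial_x^2 s_\theta\in L^\infty$) maps $H^2$ to $H^2$ and $H^1$ to $H^1$, the second term is controlled by $\|s_\theta u\|_{H^2}\le C\|u\|_{H^2}$. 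No Hilbert transform and no Calder\'on commutator estimate are needed.

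Your decomposition, by contrast, isolates the commutator $[\cH,s_\theta]$ and invokes the classical Calder\'on bound $\|[\cH,a]v\|_{L^2}\le C\|a'\|_{L^\infty}\|v\|_{L^2}$, which is not stated in the paper and is substantially deeper than what the lemma requires. What your approach buys is structural insight: the rewriting you perform is much closer in spirit to the decomposition used later in the proof of Theorem~\ref{themcompct_L} (relative compactness), where one genuinely needs to extract decay or localization from the difference $\cL_\infty-\cL$, and where commutator manipulations with $\cH$ become essential. For the present lemma, however, mere $H^2\to H^1$ boundedness is a much softer statement, and the paper's three-line Plancherel argument suffices.
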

\begin{proof}
Let $u \in H^2$ then, we have that 
\begin{equation}
\label{diff_L}
(\cL_\infty - \cL) u = (1+(-\Delta)^{1/2})u - s_{\theta}(1+(-\Delta)^{1/2})(s_{\theta}u) + c_{\theta} u.
\end{equation}
By Corollary \eqref{corcsbded}, we have the bounds
\[
\| c_{\theta} u \|_{H^1}^2 
\leq 2\|c_\theta\|^2_{W^{1,\infty}} \| u\|_{H^1}^2\leq C\| u\|_{H^2}^2,
\]
for some $C > 0$. Moreover,
\[
\begin{aligned}
\| (1+(-\Delta)^{1/2})u \|_{H^1}^2 &= \int_\R (1+\xi^2) \big| \big((1+(-\Delta)^{1/2})u\big)^{\wedge}(\xi) \big|^2 \, d \xi \\
&\leq 2 \int_\R (1+\xi^2)^2 |\widehat{u}(\xi)|^2 \, d \xi = 2 \| u\|_{H^2}^2.
\end{aligned}
\]
From the later inequality and Corollary \eqref{corcsbded}, there exists $\widetilde{C} > 0$ such that
\[
\| s_{\theta}(1+(-\Delta)^{1/2})(s_{\theta}u) \|_{H^1}^2 
\leq 2 \|s_\theta\|^2_{W^{1,\infty}} \| (1+(-\Delta)^{1/2})(s_{\theta}u) \|_{H^1}^2
\leq \widetilde{C} \| u \|_{H^2}^2.
\]
We combine all  estimates above to conclude that there exists a constant $C > 0$ such that
\begin{equation}
\label{estLLinf}
\| (\cL_\infty - \cL) u \|_{H^1} \leq C \| u \|_{H^2},
\end{equation} 
for all $u \in D(\cL) = H^2$. This shows the result.
\end{proof}

At this point, we cite two theorems, one due to Kolmogorov \cite{Kolm31} and Riesz \cite{Riesz33} (see, for example, \cite{HaHe10} and the references therein) and the other due to Pego \cite{P85a},  describing totally bounded sets in $L^p$ and $L^2$, respectively. We recall that precompactness and totally boundedness are equivalent concepts for sets contained in complete metric spaces.

\begin{theorem}[Kolmogorov-Riesz \cite{Kolm31,Riesz33}] 
\label{themKolmogorov} 
A bounded set $\cF\subset L^p(\R^n)$ with $1\leq p<\infty$ is totally bounded if and only if  
\begin{itemize}
    \item[(a)] {\em ($L^p$-equicontinuity)} $\lim_{h\to 0}\int_{\R^n}|u(x+h)-u(x)|^p\,dx = 0$ uniformly for $u\in \cF$, and 
    \item[(b)] {\em ($L^p$-uniform decay)} $\lim_{R\to\infty}\int_{|x|>R}|u(x)|^p\,dx = 0$ uniformly for $u\in \cF$.
\end{itemize}
\end{theorem} 

\begin{theorem}[Pego \cite{P85a}] 
\label{themPego} 
Let $\cF$ be a bounded set of $L^2(\R^n)$ and $\widehat{\cF}:=\{\widehat{u}\,|\, u\in \cF\}$. The functions for $\cF$ are $L^2$-equicontinuous 
if and only if the functions for $\widehat{\cF}$ decay uniformly in $L^2$ and vice versa.
\end{theorem}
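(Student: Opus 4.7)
The plan is to work in Fourier space via Plancherel's identity, which translates the $L^2$ shift $\tau_h u(x) := u(x-h)$ into multiplication by $e^{-ih\cdot \xi}$:
\[
\|\tau_h u - u\|_{L^2}^2 = \int_{\R^n} \bigl| e^{-ih\cdot \xi} - 1 \bigr|^2 \, |\widehat{u}(\xi)|^2 \, d\xi.
\]
This identity is what ties the two conditions in the statement together: $L^2$-equicontinuity of $\cF$ means the left-hand side is small uniformly in $u\in\cF$ as $|h|\to 0$, while uniform decay of $\widehat{\cF}$ in $L^2$ means that the high-frequency tail of $|\widehat u|^2$ is small uniformly in $u \in \cF$.

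First I would dispatch the easy direction, that uniform $L^2$-decay of $\widehat{\cF}$ implies $L^2$-equicontinuity of $\cF$. Given $\epsilon>0$, use uniform decay to pick $R>0$ with $\int_{|\xi|>R}|\widehat u|^2\,d\xi<\epsilon^2/8$ for every $u\in\cF$. Split the Plancherel integral at $|\xi|=R$; use $|e^{-ih\cdot\xi}-1|^2\leq |h|^2 R^2$ on the low-frequency part and $|e^{-ih\cdot\xi}-1|^2\leq 4$ on the high-frequency part. Boundedness of $\cF$ in $L^2$ then gives $\|\tau_h u-u\|_{L^2}^2\leq |h|^2 R^2 \sup_{u\in\cF}\|u\|_{L^2}^2 + \epsilon^2/2$, which is less than $\epsilon^2$ once $|h|$ is small, uniformly in $\cF$.

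The main obstacle is the converse: from equicontinuity of $\cF$ recover uniform frequency decay of $\widehat{\cF}$. The trick is to \emph{average} $\|\tau_h u-u\|_{L^2}^2$ over $h$ in a small ball $B_\delta(0)\subset \R^n$. Fubini gives
\[
\frac{1}{|B_\delta|}\int_{B_\delta(0)}\|\tau_h u-u\|_{L^2}^2\,dh = \int_{\R^n} K_\delta(\xi)\,|\widehat u(\xi)|^2\,d\xi,
\]
where $K_\delta(\xi) := \frac{1}{|B_\delta|}\int_{B_\delta(0)}|e^{-ih\cdot\xi}-1|^2\,dh = 2 - \frac{2}{|B_\delta|}\int_{B_\delta(0)}\cos(h\cdot\xi)\,dh$. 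The analytic heart of the argument is a uniform lower bound $K_\delta(\xi)\geq c_0>0$ whenever $\delta|\xi|\geq M$, for absolute constants $c_0,M>0$. In one dimension this is explicit, since $K_\delta(\xi) = 2 - 2\sin(\delta\xi)/(\delta\xi)$ tends to $2$ as $|\delta\xi|\to\infty$; in higher dimensions one reduces to the radial integral by rotation invariance (the result is expressible via a Bessel function that vanishes at infinity), and obtains the same asymptotics. With this lower bound in hand, for given $\epsilon > 0$ take $\delta$ so small that equicontinuity forces the left-hand side of the displayed averaging identity to be at most $c_0\epsilon^2$ uniformly over $\cF$; the inequality $K_\delta(\xi) \geq c_0$ on $|\xi|\geq M/\delta$ then yields $\int_{|\xi|\geq M/\delta}|\widehat u|^2\,d\xi \leq \epsilon^2$ uniformly over $u\in\cF$, which is exactly uniform decay of $\widehat{\cF}$.

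The \textbf{vice versa} clause requires no additional work: since the Fourier transform is an $L^2$-isometry and the inverse transform amounts (up to a harmless constant) to the forward transform composed with $\xi\mapsto -\xi$, applying the two implications just proved to $\widehat{\cF}$ in place of $\cF$ immediately yields the dual equivalence between $L^2$-equicontinuity of $\widehat{\cF}$ and uniform $L^2$-decay of $\cF$.
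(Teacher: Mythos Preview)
Your proof is correct. The paper itself does not prove this statement at all: its ``proof'' consists entirely of the citation ``See Theorem~1 in \cite{P85a}.'' So there is no argument in the paper to compare against; you have supplied a complete self-contained proof where the authors simply defer to the literature. Your approach via Plancherel and the averaging trick (producing the kernel $K_\delta(\xi)=2-\tfrac{2}{|B_\delta|}\int_{B_\delta}\cos(h\cdot\xi)\,dh$ and exploiting its uniform positive lower bound for $\delta|\xi|$ large) is in fact essentially Pego's original argument, so in that sense you have reconstructed what the paper is citing.
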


Before using Theorems \ref{themKolmogorov} and \ref{themPego} in our proofs, we need the following preparatory results. 

\begin{proposition} 
\label{proptotbon}
Let $\cF$ be a bounded set in $H^1$ and  $\phi\in H^1$. Then the set $\phi\cF:=\{\phi u \,|\, u\in \cF\}$ is precompact in $L^2$.
\end{proposition}
\begin{proof}
    Let $\{v_k\}_{k\in \N}\subset \phi\cF$. By definition, there exists a bounded sequence $\{u_k\}_{k\in\N}\subset \cF$ such that $v_k =\phi u_k$ for each $k\in \N$.  The sequence, $\{u_k\}_{k\in\N}$ contains an $H^1$-weakly convergent subsequence $\{v_{k_n}\}_{n\in\N}$ that, without loss of generality, has limit $u=0$. 

    We claim that $v_{k_n}=\phi u_{k_n}\to 0$ strongly in $L^2$. Let $\ep>0$, $\eta\in C_0^\infty$ be a standard cut-off function such that $\supp \eta \subset B_1$, and $\eta_R(x) := \eta(x/R)$. Now, we split $u_{k_n}$ as the sum of $u_{k_n}\eta_R$ and $u_{k_n}(1-\eta_R)$. Because $\phi\in L^2$ and $u_{k_n}$ is uniformly bounded in $L^\infty$ (by Sobolev inequality), there exists $R>0$ such that for every $k\in \N$,
    \[
    \nld{\phi (1-\eta_R)u_{k_n}}<\ep/2.
    \]
    Moreover, $H^1$-weak convergence implies locally uniform convergence, therefore there exists $N_0\in \N$ such that for each $n>N_0$ 
    \[
    \nld{\phi \eta_R u_{k_n}}<\ep/2.
    \]
    Combining both inequalities $v_{k_n}\to 0$ strongly in $L^2$, and the proof is complete.
\end{proof}

In the proof of the following results, if a set $\cK$ belongs to an operator $\cG$'s domain, we denote by $\cG\cK$ the image of $\cK$ under the action of $\cG$.

\begin{proposition} \label{propononlocal}
    Let $\delta>0$, $g_\delta\in H^1\cap C^\infty$ be a bounded monotonic function such that $\supp\partial_x g_\delta \subset [-\delta,\delta]$, and  $\cH:L^2\to L^2$ be the Hilbert transform. Then, the linear operator $[g_\delta, \cH]\partial_x:H^2\to H^1$ is compact in $L^2$.
\end{proposition}

\begin{proof}
The result follows from Theorem~\ref{proptotbon} by showing that for any bounded $\cF\subset H^2$ the set $[g_\delta,\cH]\partial_x \cF$ is $L^2$-equicontinuous and uniformly $L^2$-decaying. 
 We present the proof in two steps.


First, let $\cG\subset H^1$ be bounded. Then, for any $v\in\cG$, it holds that 
 \[
\begin{aligned}
\int_{\{|\xi|>R\}}|\hat{v}(\xi)|^2 \, d\xi 
    & \leq \frac{1}{1+R^2}\int_{\R}(1+\xi^2)|\hat{v}(\xi)|^2\, d\xi = \frac{\nhu{v}^2}{1+R^2} \leq \frac{M^2}{1+R^2}.
\end{aligned}
\]
Therefore, $\widehat{\cG}$ is $L^2$-uniformly decaying, and $\cG$ is $L^2$-equicontinuous by Theorem \ref{themPego}.

Thus, the  $L^2$-equicontinuity 
 of $[g_\delta,\cH]\partial_x \cF$ follows by  showing its boundedness in $H^1$.
For any $u\in \cF$, we get that
\[
\nld{[g_\delta,  \cH ]\partial_xu}^2 \leq \nld{g_\delta  \cH \partial_xu}^2 + \nld{ \cH (g_\delta\partial_xu)}^2 \leq 2\|g_\delta\|_{W^{1,\infty}}^2\nld{\partial_xu}^2,
\]
by the continuity of $\cH$ in $L^2$. Analogously, we obtain the estimate   
 \[
\nld{\partial_x([g_\delta,  \cH ]\partial_xu)}^2 \leq 
\nld{(\partial_xg_\delta)  \cH \partial_xu}^2+\nld{g_\delta  \cH \partial^2_xu}^2 + \nld{ \cH \partial_x(g_\delta\partial_xu)}^2\\
\leq  2\|g_\delta\|_{W^{1,\infty}}^2\nhu{\partial_xu}^2.
\]
 Therefore, $\nhu{[g_\delta,  \cH ]\partial_xu} \leq  C\nhd{u}$ for some positive constant $C$, and we conclude by the boundedness of $\cF$ in $H^2$.

 Second, we show the $L^2$-uniform decay of $[g_\delta,\cH]\partial_x \cF$.
 Let $R>2\delta$ be a fixed constant. We claim that 
 \begin{equation}
\label{eq:commutator}
\pi[g_\delta,  \cH ]\partial_xu(x) = C\int^{\infty}_{\delta} \frac{\partial_xu(-\sgn(x)y)}{y+|x|} \, dy + \int_{-\delta}^{\delta} \frac{g_\delta(y)-g_\delta(\sgn(x)\delta)}{y-x} \, \partial_xu(y) \, dy,
\end{equation}
for $|x|>R$.

We postpone the proof of the claim and prove the result.
By integration by parts on former term on the right hand side of \eqref{eq:commutator} we get
\[
\begin{aligned}
\int^{\infty}_{\delta} \frac{\partial_xu(-\sgn(x)y)}{y+|x|} \, dy 
= & -\sgn(x)\int^{\infty}_{\delta} \frac{u(-\sgn(x)y)-u(-\sgn(x)\delta)}{(y+|x|)^2} \, dy.
\end{aligned}
\]
Thus, if $M>0$ is a uniform bound for $\cF$, then  $\|u\|_{L^\infty}\leq M$ for every $u\in \cF$, and
\[
\left|\int^{\infty}_{\delta} \frac{\partial_xu(-\sgn(x)y)}{y+|x|} \, dy\right|
\leq 2M\int^{\infty}_{\delta} \frac{1}{(y+|x|)^2} \, dy = \frac{2M}{\delta +|x|}.
\]
By integration on $x$ over $\R\setminus [-R,R]$, we get
\begin{equation}
\label{eq:commutator_p1}
\int_{|x|>R}\left(\int^{\infty}_{\delta} \frac{\partial_xu(-\sgn(x)y)}{y+|x|} \, dy\right)^2 dx
\leq 4M^2\int_{|x|>R}\frac{dx}{(\delta+|x|)^2} \leq \frac{8M^2}{\delta+R}.
\end{equation}
Hence,  the former term on the right hand side of \eqref{eq:commutator} is $L^2$-uniformly decaying. 

Now we consider the latter term on the right hand side of  \eqref{eq:commutator}.
Let $C_0:=|g_\delta(-\delta)|+|g_\delta(\delta))|$. By recalling that $g_\delta$ is monotone, from Jensen's inequality, we get 
\[
\begin{aligned}
\int_{|x|>R}\left(\int_{-\delta}^{\delta} \frac{g_\delta(y)-g_\delta(\sgn(x)\delta)}{y-x} \, \partial_xu(y) \, dy\right)^2 \! dx &\leq 2\delta \int_{|x|>R}\int_{-\delta}^{\delta} \frac{\left(g_\delta(y)-g_\delta(\sgn(x)\delta)\right)^2}{\left(y-x\right)^2} \, (\partial_xu(y))^2 \, dy dx\\
&\leq  8C_0^2\delta \int_{|x|>R}\int_{-\delta}^{\delta} \frac{1}{\left(y-x\right)^2} \, (\partial_xu(y))^2 \, dy dx.
\end{aligned}
\]
Since $\partial_xu\in L^2$ and $(y-x)^2 \geq (|x|-\delta)$ for every $y\in(-\delta,\delta)$, we obtain
\begin{equation}
\label{eq:commutator_p2}
\rint_{|x|>R}\left(\int_{-\delta}^{\delta} \frac{g_\delta(y)-g_\delta(\sgn(x)\delta)}{y-x} \, \partial_xu(y) \, dy\right)^2 dx\leq   \rint_{|x|>R}\frac{8C_0^2\delta\nld{\partial_xu}^2 dx}{\left(|x|-\delta\right)^2}
\leq  \frac{16C_0^2\delta M^2}{R-\delta}.
\end{equation}
Combining  Young's inequality with equations  \eqref{eq:commutator_p1} and \eqref{eq:commutator_p2}, we conclude that
\[
\begin{aligned}
\int_{|x|>R}([g_\delta,  \cH ]\partial_x u)^2 dx
&\leq \frac{16M^2(C^2+2C_0^2\delta)}{\pi(R-\delta)}.
\end{aligned}
\]
Therefore, $[g_\delta,\cH]\partial_x \cF$ is $L^2$-uniformly decaying, and the result follows.
\medskip

Finally, we prove claim \eqref{eq:commutator}. Let $v=\partial_x u$ for notational convenience. 
By the fundamental theorem of calculus and Lemma \ref{lemma:Hilbert}, 
\[
\pi[g_\delta,  \cH ]v(x) 
=  \lim_{\ep \to 0}\int_{|h|>\ep} \frac{g_\delta(x+h)-g_\delta(x)}{h}v(x+h) \, dh
=  \lim_{\ep \to 0}\int_{|h|>\ep} \frac{1}{h}\int_x^{x+h}g_\delta'(t)\,dt \, v(x+h) \, dh.
\]
Assuming that $x>R$ and letting $\ep<\delta$, the expression $\pi[g_\delta,  \cH ]v(x)$ becomes
\[
\begin{aligned}
\pi[g_\delta, \cH]v(x) = & \int_{-\infty}^{-x+\delta} \!\frac{1}{h}\int_{x}^{x+h} \!g_\delta'(t)\,dt \, v(x+h) \, dh + \\
& + \lim_{\ep \to 0} \left[\int_{-x+\delta}^{-\ep} \frac{1}{h}\int_{x}^{x+h} \!g_\delta'(t)\,dt \, v(x+h) \, dh + \int_{\ep}^{\infty}\frac{1}{h}\int_{x}^{x+h}\!g_\delta'(t)\,dt \, v(x+h) \, dh \right].
\end{aligned}
\]
In the latter equality, the last two integrals vanish since $\supp g' \subset [-\delta,\delta]$ and $\delta<x+h$ for $h>\delta-x$.  Furthermore, the first term is given by 
\[
\pi[g_\delta,  \cH ]v(x) = \int_{-\infty}^{-x-\delta} \frac{1}{h}\int_{x}^{x+h}g_\delta'(t)\,dt \, v(x+h) \, dh + \int_{-x-\delta}^{-x+\delta} \frac{1}{h}\int_{x}^{x+h}g_\delta'(t)\,dt \, v(x+h) \, dh.
\]
Since $\supp g'_\delta\subset [x+h,x]$ for $h\leq -x-\delta$, the integral   $\int_{|x|\leq \delta}g'(x)dx$ is independent of $h$. Hence,
\[
\pi[g_\delta,  \cH ]v(x) =-C\int_{-\infty}^{-x-\delta} \frac{v(x+h)}{h} \, dh + \int_{-x-\delta}^{-x+\delta} \frac{1}{h}\int_{\delta}^{x+h}g_\delta'(t)\,dt \, v(x+h) \, dh,
\]
 where $C=\int_{|x|\leq \delta}g'(x)dx$.  

Now, by letting $y=x+h$, the fundamental theorem of calculus yields 
\[
\begin{aligned}
\pi[g_\delta,  \cH ]v(x) = &-C\int_{-\infty}^{-\delta} \frac{v(y)}{y-x} \, dy + \int_{-\delta}^{\delta} \frac{1}{y-x}\int_{\delta}^{y}g_\delta'(t)\,dt \, v(y) \, dy,\\
= &-C\int_{-\infty}^{-\delta} \frac{v(y)}{y-x} \, dy - \int_{-\delta}^{\delta} \frac{g_\delta(\delta)-g_\delta(y)}{y-x} \, v(y) \, dy.\\
\end{aligned}
\]
Using an analogous argument for $x<-R$ we conclude \eqref{eq:commutator} and the proof is finished.

\end{proof}

Now, we prove this subsection's main result.

\begin{proof}[\bf Proof  of Theorem \ref{main_res_L-Linf}]
Let $\delta>0$ fixed,   
$g_\delta\in C^{\infty}$ be an increasing antisymmetric function such that $g_{\delta}(x)=x/|x|$ for $|x|\geq \delta$, $u\in H^2$, and denote $(1+(-\Delta)^{1/2})$ by $\cT$. Adding and substrating $g_{\delta}^2(x)\cT u+g_{\delta}(x)\cT(s_{\theta}u)+g_{\delta}(x)\cT(g_{\delta}(x)u)$ to $(\opli-\opl)u$, we recast the later as  
\begin{equation}
\label{recast_L}
(\cL_\infty - \cL) u = g_\delta[g_\delta, \cT] u+ \cQ_1 u + \cQ_2 u + \cQ_3 (s_{\theta}u)+\cQ_4 u,
\end{equation}
where 
$[g_\delta, \cT]$ denotes the commutator of $g_\delta \Id$ with $\cT$,  and 
\begin{align}
\label{aux_A12}
\cQ_1 u &:=[1-g^2_{\delta}]\cT u, &\cQ_2 u&:= g_{\delta}\cT[(g_{\delta}-s_{\theta})u],\\
\label{aux_A34}
\cQ_3 u&:=[g_{\delta}-s_{\theta}]\cT u, &\cQ_4 u&:= c_{\theta} u.
\end{align}

Now, we analyse the compactness of $[g_\delta,\cT]$ and the $\cQ$s operators. Let $\cF\subset H^2$ such that $\sup_{u\in \cF}\nhd{u}<M$ for some $M>0$. By the continuity of $\cT:H^2\to H^1$ (see proof in Lemma \ref{themcontinuity}) and Corollary~\ref{corthetabded}, we have that
\[
\nhu{c_\theta}^2\leq 2\|\cos \brt\|^2_{W^{1,\infty}}\nhu{\cT \cos \brt}^2\leq C\nhd{\cos \brt}^2,
\]
for some positive $C$. Hence,  $c_\theta\in H^1$ and  $1-g^2_\delta \in H^1$ by assumption. Therefore, $\cQ_1 \cF$ and $\cQ_4 \cF$ are precompact sets in $L^2$ by   Proposition \ref{proptotbon} and $\cQ_1$ and $\cQ_4$'s $L^2$-compactness follows. Analogously, $\cQ_3 \cF$ is precompact in $L^2$ since $g_\delta-s_\theta\in H^1$ and $\cT:H^2\to H^1$ is continuous. Thus, $\cQ_3:H^2\to H^1$ is compact in $L^2$. Let $\Id$ be the identity operator on $H^2$, then    
$s_\theta \Id$ is continuous on $H^2$, and the composition  $\cQ_3\circ s_\theta I:H^2\to H^1$ is compact in $L^2$. 
Because $g_\delta-s_\theta\in H^1$ and $H^2$ is continuously embedded into $H^1$,  $[g_\delta-s_\theta]\cF$ is precompact in $H^1$. Thus, $\cQ_2:H^2\to H^1$ is compact in  $L^2$ by the continuity of $g_\delta \cT$ from $H^1$ to $L^2$.
Since $g_\delta \Id:L^2\to L^2$
is continuous, the precompactness of $[g_\delta,\cT]\cF$ in $L^2$ implies  that $g_\delta[g_\delta,\cT]:H^2\to H^1$ is compact in $L^2$. Now, using Lemma~\ref{lemma:Hilbert}, we get 
\[
\begin{aligned}\relax
[g_\delta, \cT] = [g_\delta, (-\Delta)^{1/2}] = [g_\delta, \cH \circ \partial_x] 
&=  [g_\delta, \cH]\partial_x  - \cH\circ (\partial_xg_\delta)\Id.
\end{aligned}
\]
By the regularity of $g_\delta$ and Proposition~ \ref{proptotbon}, $(\partial_x g_\delta)\Id:H^2\to H^1 $ is compact in $L^2$. Therefore, 
$\cH\circ (\partial_xg_\delta)\Id:H^2\to H^1$  is compact in $L^2$  by $\cH$'s $L^2$-continuity. Now, by Proposition \ref{propononlocal}, $[g_\delta, \cH]\partial_x:H^2\to H^1$ is compact in $L^2$.

Because, the set of compact operators between Banach spaces is a linear manifold, the $L^2$-compactness for each term on the right hand side of $\eqref{recast_L}$ implies that  $(\opli-\opl):H^2\to H^1$ is compact in $L^2$. 
Finally, the continuity of the inclusion $H^1\xhookrightarrow{}L^2$ implies the compactness of $\opli-\opl:H^2\to L^2$ and the proof is completed.

\end{proof}

\begin{corollary}
The operator $\cL$ is a relatively compact perturbation of $\cL_\infty$ and they have the same essential spectrum. 
\end{corollary}
\begin{proof}
Let $\mu\in \res{\opli}$, hence $(\mu - \cL_\infty)^{-1} : L^2 \to H^2$ is a continuous linear operator and by Theorem \ref{main_res_L-Linf}, $\opli-\opl:H^2\to L^2$ is compact. 
Thus, the operator $(\opli-\opl)(\mu - \cL_\infty)^{-1}$ is compact on $L^2$ and by Weyl's essential spectrum theorem (see, e.g., \cite{KaPro13}, p. 29) the essential spectrum of  
$\cL$ and $\cL_\infty$ coincide. 


\end{proof}

\section{Perturbation equations and spectral stability}
\label{secspectral}

In this section, we pose the Cauchy problem \eqref{reddyneq} as the two-dimensional ODE system, and analyze its linearization $\cA$ around $(\brt,0)$. In the following result, we establish $\cA$'s closedness and stability by localizing its spectrum. 


\begin{theorem}
\label{mainspthm}
Let $\Lambda_0$ be as in Proposition \ref{propL0}, assume $\nu > 0$ fixed, and  define $\cA:H^1\times L^2\to H^1\times L^2$ with domain $D(\cA)=H^2\times H^1$ as 
\begin{equation}
\label{eq:A}
    \cA =\begin{pmatrix}0 & \Id \\ - \cL & - \nu \Id\end{pmatrix}.
\end{equation}
Then, 
\begin{equation}
\label{eq:spectralloc}
\sigma(\cA) \subset \{ 0 \} \cup \big\{ \lambda \in \C \, : \, \Re \lambda \leq - \zeta_0(\nu) < 0 \big\},
\end{equation}
where 
\begin{equation}
\label{eq:spectralgap}
\zeta_0 (\nu) = \tfrac{1}{2}\nu - \max \Big\{ \tfrac{1}{2} \boldsymbol{1}_{[2, \infty)}(\nu) \sqrt{\nu^2 - 4}\, ,\,  \tfrac{1}{2} \boldsymbol{1}_{[2 \sqrt{\Lambda_0}, \infty)}(\nu) \sqrt{\nu^2 - 4 \Lambda_0}\Big\}.
\end{equation}
\end{theorem}

The spectral gap \eqref{eq:spectralloc} in Theorem~\ref{mainspthm} determines the exponential decay for the evolutionary equation solution.
In \eqref{eq:spectralloc}, we regard the bound $\zeta_0(\nu)$ as uniform because $\cL$'s spectral bound $\Lambda_0$ is independent of $\nu$. 


\begin{remark}\label{remwhyH1}
The choice of the domain space $H^1\times L^2$ conveys a slight abuse of notation. Indeed, the operator $\cL$ in \eqref{eq:A} refers to its restriction to $H^1$, namely, the operator $\cL_{|H^1}:H^1\to L^2$ with domain $D(\widetilde{\cL}) = H^2$ given by
\[
\widetilde{\cL} u:= \cL u, \quad \text{ for every } \, u \in H^2,
\]
where, $\cL$ is the operator from $L^2$ to $L^2$ defined in \eqref{defL0}. However, since the original properties remain, such as closedness and spectral bounds, we keep the notation $\cL : H^1 \to L^2$ with the same dense domain $D(\cL) = H^2$ in the definition of the block matrix operator $\cA$. In the sequel, we shall remind the reader of this distinction at the steps of the proofs where it is explicitly required.
\end{remark}

\subsection{The perturbation equation} In order to establish the perturbation equations, we consider a solution $\brt(x) + u(x,t)$ to the dynamic equation \eqref{reddyneq}. Here, $u$ is the perturbation of the static N\'eel wall's phase, which, by the boundary conditions at infinity, must satisfy
\begin{equation}
 \label{bcu}
u(\pm \infty, t) = 0, \qquad t > 0.
\end{equation}
Upon substitution into \eqref{reddyneq}, we obtain the following nonlinear equation for the perturbation,
\begin{equation}
\label{nlpert}
\partial_t^2 u + \nu \partial_t u + \nabla \cE(\brt + u) = 0.
\end{equation}
In view of \eqref{defL0}, equation \eqref{nlpert} can be recast as
\[
 \partial_t^2 u + \nu \partial_t u + \cL u + \cN(u)  = 0,
\]
where $\cL u$ is the linearization around $\brt$ of $\nabla \mathcal{E}(\brt + u)$ acting on the perturbation $u$, and
\[
\cN(u) := \nabla \cE(\brt + u) - \cL u = O(u^2), 
\]
comprises the nonlinear terms. In view of the form of the operator \eqref{defL0}, we reckon the perturbation equation as a nonlinear wave equation. By the (standard) change of variables $v = \partial_t u$, the perturbation equation \eqref{nlpert} is equivalent to the the nonlinear hyperbolic system
\begin{equation}
\label{NLsyst}
\partial_t \begin{pmatrix}
            u \\ v
           \end{pmatrix} = \begin{pmatrix}0 & \Id \\ - \cL & - \nu \Id\end{pmatrix} \begin{pmatrix}u \\ v\end{pmatrix} + \begin{pmatrix}0 \\ \cN(u)\end{pmatrix},
\end{equation}
in the appropriate spaces, which will be determined later.

\subsection{The spectral problem}

By linearizing equation \eqref{nlpert} around the N\'eel wall's phase, we obtain the following equation for the perturbation,
\begin{equation}
 \label{linequ}
\partial_t^2 u + \nu \partial_t u + \cL u = 0,
\end{equation}
which is equivalent to the following linear system in the $(u,v)$ variables,
\begin{equation}
 \label{linsyst}
\partial_t \begin{pmatrix}
            u \\ v
           \end{pmatrix} = \begin{pmatrix}0 & \Id \\ - \cL & - \nu \Id\end{pmatrix} \begin{pmatrix}u \\ v\end{pmatrix}.
\end{equation}
We specialize the linearized equation \eqref{linequ} to perturbations of the form $e^{\lambda t}u(x)$, with $\lambda \in \C$ and $u \in X$, being $X$ a Banach space to be determined below. Substituting the Ansatz into \eqref{linequ}, we obtain the following spectral problem
\begin{equation}
 \label{spectralu}
(\lambda^2 + \nu \lambda) u + \cL u = 0.
\end{equation}

\begin{remark}
Under the substitution $\lambda = i \zeta$, equation \eqref{spectralu} can be written in terms of a \textit{quadratic operator pencil}, $\widetilde{\cT} u = 0$, with $\widetilde{\cT} = \widetilde{\cT}_0 + \zeta \widetilde{\cT}_1 + \zeta^2 \widetilde{\cT}_2$, and $\widetilde{\cT}_0 =\cL$, $\widetilde{\cT}_1 = i \nu \Id$, $\widetilde{\cT}_2 = -\Id$ (cf. Markus \cite{Ma88}). The transformation $v = \lambda u$ (the spectral equivalent of the change of variables $v = \partial_t u$)  defines an appropriate Cartesian product of the base space which allows us to write equation \eqref{spectralu} as a genuine eigenvalue problem of the form
\begin{equation}
 \label{evproblem}
  \cA \begin{pmatrix}
         u \\ v
        \end{pmatrix}:=
        \begin{pmatrix}
         0 & \Id \\ - \cL & -\nu
        \end{pmatrix} \begin{pmatrix}
         u \\ v
        \end{pmatrix} =
\lambda \begin{pmatrix}
         u \\ v
        \end{pmatrix} 
\end{equation}
The matrix operator $\cA$ is often called the companion matrix to the pencil $\widetilde{\cT}$ (see \cite{BrJoK14,KHKT13} for further information). Clearly, equation \eqref{evproblem} is the spectral equation associated to the linear system \eqref{linsyst}. We shall refer to both \eqref{spectralu} and \eqref{evproblem} as the spectral problem making no distinction.
\end{remark}

In the present stability analysis, we are interested in the spectral properties of the block matrix operator,
\[
 \cA : H^1 \times L^2 \to H^1 \times L^2, 
\]
regarded as a linear, densely defined operator in $H^1 \times L^2$ with domain $D(\cA) := H^2 \times H^1$. In other words, we choose our \emph{energy base space} as $H^1 \times L^2$. 
This choice of energy base space is not only consistent with the boundary conditions \eqref{bcu} for N\'eel wall's phase perturbations, but it also matches the appropriated perturbation space for the energy functional defined in \eqref{varprob}, which requires that variations of $u$ belong to $ H^1$. In addition, the condition  $v\in L^2$ implies that the perturbations have finite kinetic energy since $v$ is the spectral equivalent to $\partial_tu$. Thus, the stability analysis pertains to localized perturbations with finite energy in $H^1 \times L^2$. For shortness, we introduce the notation
\[
 U = (u,v) \in H^2 \times H^1, \qquad \cA U = (v, -\cL u - \nu v) \in H^1 \times L^2.
\]
The standard scalar product in $H^1 \times L^2$ will be denoted as
\[
\langle U, F \rangle_{H^1\times L^2}  = \langle u,f  \rangle_{H^1} + \langle v, g  \rangle_{L^2},
\]
for any $U = (u,v)$ and $F = (f,g)$ in  $H^1 \times L^2$. 

We finish this subsection by verifying the closedness of the operator $\cA$ so that the given resolvent and spectra definitions apply.

\begin{lemma}
\label{lemAclosed}
The block matrix operator $\cA : H^1 \times L^2 \to H^1 \times L^2$ is closed.
\end{lemma}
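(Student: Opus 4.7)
The plan is to verify the closedness of $\cA$ directly from the definition: take an arbitrary sequence $U_n = (u_n, v_n) \in D(\cA) = H^2 \times H^1$ with $U_n \to U = (u,v)$ in $X = H^1 \times L^2$ and $\cA U_n \to F = (f,g)$ in $X$, and show that $U \in D(\cA)$ together with $\cA U = F$.

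The first component of $\cA U_n = (v_n, -\cL u_n - \nu v_n)$ converges to $f$ in $H^1$, which is the key extra piece of information available. Since $H^1$-convergence implies $L^2$-convergence and the second coordinate of $U_n$ already satisfies $v_n \to v$ in $L^2$, uniqueness of limits in $L^2$ forces $v = f$. In particular $v \in H^1$, which gives half of what is needed for $U \in D(\cA)$.

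For the second coordinate, I would subtract off the part that is already known to converge: from $-\cL u_n - \nu v_n \to g$ in $L^2$ and $v_n \to v$ in $L^2$, I obtain $\cL u_n \to -g - \nu v$ in $L^2$. At this point I invoke the closedness of the scalar operator $\cL : L^2 \to L^2$ with domain $H^2$, established in Corollary \ref{corLclosed} as a consequence of self-adjointness (Theorem \ref{thmLselfadj}). Indeed, $u_n \in H^2 = D(\cL)$, $u_n \to u$ in $L^2$ (because $H^1 \hookrightarrow L^2$), and $\cL u_n$ converges in $L^2$, so closedness of $\cL$ yields $u \in H^2$ and $\cL u = -g - \nu v$. (Here I rely on the convention indicated in Remark \ref{remwhyH1}: the operator denoted $\cL$ inside the matrix $\cA$ has the same domain $H^2$ and acts in the same way as the $L^2 \to L^2$ operator whose closedness was established.)

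Combining the two coordinate results gives $U = (u,v) \in H^2 \times H^1 = D(\cA)$ and
\[
\cA U = (v, -\cL u - \nu v) = (f, g) = F,
\]
which is the definition of closedness. There is no real obstacle: the argument is essentially a two-line reduction to the closedness of $\cL$, with the only subtlety being the book-keeping between the norms of $H^1 \times L^2$ and the $L^2$-based closedness of the scalar operator.
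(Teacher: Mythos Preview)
Your proof is correct and follows essentially the same approach as the paper's own proof: both reduce the closedness of the block operator $\cA$ to the closedness of the scalar operator $\cL$ (Corollary \ref{corLclosed}), using the first component of $\cA U_n \to F$ to upgrade $v$ to $H^1$ and the second component together with closedness of $\cL$ to upgrade $u$ to $H^2$. If anything, your write-up is slightly more explicit in identifying $v = f$ via uniqueness of limits in $L^2$.
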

\begin{proof}
Assume that $\{U_j\}_{j \in \N} = \{(u_j, v_j)\}_{j \in \N} \subset D(\cA) = H^2 \times H^1$ is a Cauchy sequence in $H^1 \times L^2$ such that $\{ \cA U_j \}_{j \in \N}$ is a Cauchy sequence in $H^1 \times L^2$ as well.  Let us denote $U = (u,v) = \lim_{j \to \infty} U_j$ and $F = (f,g) = \lim_{j \to \infty} \cA U_j$. Hence, as $j \to \infty$, 
\[
\begin{aligned}
v_j &\to f, \quad \text{in } \, H^1,\\
-\cL u_j - \nu v_j &\to g, \quad \text{in } \, L^2.
\end{aligned}
\]
and in particular,  $v_j \to f$ in $L^2$ and $- \cL u_j \to g + \nu f$ in $L^2$. Now, because 
$\cL$ is closed 
(see Lemma \ref{thmLselfadj}), we have that $u_j \to u$ in $L^2$ with $u \in D(\cL) = H^2$ and $- \cL u = g + \nu f$. Therefore, $U = (u,v) \in D(\cA)$, 
\[
\cA U = (v, - \cL u - \nu v) = (f,g) = F,
\] 
and $\cA$ is a closed operator. 
\end{proof}

\subsection{Point spectral stability}
\label{subsec:pointstability}

In this section, we localize $\cA$'s point-spectral and study its eigenvalues associated with N\'eel wall's translational invariance.

\begin{lemma}
\label{lemzeroremains}
$\lambda = 0$ is a simple eigenvalue of $\cA$ with eigenfunction 
\begin{equation}
\label{defTheta}
\Theta := (\partial_x \brt, 0) \in D(\cA) = H^2 \times H^1.
\end{equation}
\end{lemma}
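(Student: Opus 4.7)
The plan is to prove three things in order: that $\Theta$ actually lies in the kernel of $\cA$, that the kernel is one-dimensional (geometric simplicity), and that there is no Jordan chain above $\Theta$ (algebraic simplicity).

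First, I would verify that $\Theta = (\partial_x\brt, 0)$ belongs to $D(\cA) = H^2 \times H^1$. By Proposition \ref{propNeelw}\hyperref[propc]{\rm{(c)}} we have $\partial_x\brt \in H^2$, so indeed $\Theta \in D(\cA)$. Using the definition of $\cA$ from \eqref{evproblem} and the fact that $\cL \partial_x\brt = 0$ by Proposition \ref{propL0}\hyperref[propaa]{\rm{(a)}}, we get
\[
\cA \Theta = (0, -\cL \partial_x\brt - \nu \cdot 0) = (0,0),
\]
which shows $\Theta$ is an eigenfunction of $\cA$ associated with $\lambda = 0$.

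Second, I would show that the kernel of $\cA$ is spanned by $\Theta$. Suppose $(u,v) \in D(\cA)$ satisfies $\cA(u,v) = 0$. From the first component this gives $v = 0$, and the second component then reduces to $\cL u = 0$. By Corollary \ref{corzeroL0}, the null space of $\cL$ is spanned by $\partial_x\brt$, so $u = c \, \partial_x\brt$ for some $c \in \C$, and hence $(u,v) = c \Theta$.

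Third, I would rule out the existence of a generalized eigenfunction, which is the key step. Suppose for contradiction that there exists $\Phi = (u,v) \in D(\cA)$ with $\cA \Phi = \Theta$. Reading off components yields $v = \partial_x\brt$ and $\cL u = -\nu \, \partial_x\brt$. Since $\cL$ is self-adjoint (Theorem \ref{thmLselfadj}) with one-dimensional kernel spanned by $\partial_x\brt$, the Fredholm alternative, equivalently Proposition \ref{propL0}\hyperref[propbb]{\rm{(b)}}, requires the right-hand side to be $L^2$-orthogonal to $\partial_x\brt$. But
\[
\langle -\nu\, \partial_x\brt, \partial_x\brt \rangle_{L^2} = -\nu \, \|\partial_x\brt\|_{L^2}^2 \neq 0,
\]
since $\nu > 0$ and $\partial_x\brt > 0$ on $\R$ (Proposition \ref{propNeelw}\hyperref[propa]{\rm{(a)}}). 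This contradiction shows that no such $\Phi$ exists, so $\ker \cA = \ker \cA^2$ and the algebraic multiplicity of the eigenvalue $\lambda = 0$ coincides with the geometric multiplicity, which is one. I do not expect any genuine obstacle here; the argument is essentially bookkeeping built on the Fredholm structure already established for $\cL$ and on the assumption $\nu > 0$ inherited from the LLG reduction.
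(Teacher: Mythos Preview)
Your proof is correct and follows essentially the same route as the paper: verify $\cA\Theta=0$, read off $v=0$ and $\cL u=0$ to get geometric simplicity via Corollary~\ref{corzeroL0}, and rule out a Jordan chain by showing $\cL u=-\nu\,\partial_x\brt$ is unsolvable. The only cosmetic difference is that the paper phrases the last step as ``$\cL$ would have a nontrivial Jordan chain, contradicting Corollary~\ref{corzeroL0},'' while you spell out the Fredholm orthogonality obstruction $\langle -\nu\,\partial_x\brt,\partial_x\brt\rangle_{L^2}\neq 0$ explicitly.
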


\begin{proof}
By Proposition \ref{propNeelw}, $\partial_x \brt \in H^2$, $\Theta \in D(\cA)$ and  $\cA \Theta = (0, - \cL \partial_x \brt) = 0$. Thus,
$0 \in \ptsp(\cA)$ with eigenfunction $\Theta$.   
The eigenvalue $\lambda=0$ has a geometric multiplicity equal to one. Otherwise, there exists a nontrivial $U = (u,v)$ in $\ker \cA\setminus \lspan \{\Theta\}$. Because
$\cA U=0$, we get $v = 0$ and $u = u_\perp + \alpha \partial_x \brt$ for some $\alpha \in \C$ where $\cL u_\perp = 0$. Now, by Lemma~\ref{corzeroL0}, $u_\perp = 0$ and $U \in \lspan \{\Theta\}$, a contradiction. 
 Furthermore, $\lambda=0$ algebraic multiplicity equals one. If this were not the case, there would exist a nontrivial Jordan chain given by $\cA U = \alpha \Theta$ where $\alpha\in \C\setminus \{0\}$ and $U\neq 0$; that is, $ \cA U = (v, - \cL u - \nu v) = (\alpha \partial_x \brt, 0)$. Hence, $v = \alpha \partial_x \brt$ and $-\cL u = \nu \alpha \partial_x \brt$ and $\opl$ has a nontrivial Jordan chain, which contradicts Lemma~\ref{corzeroL0}.   
\end{proof}

\begin{lemma}
\label{lemptspecstab}
Let $\lambda \in \ptsp(\cA)\setminus\{0\}$ and $\Lambda_0$ be the constant in Proposition \ref{propL0} \hyperref[propcc]{\rm{(c)}}.  Then,
\begin{equation} 
\label{ptspectbd}
\Re \lambda \leq - \tfrac{1}{2} \nu + \tfrac{1}{2} \boldsymbol{1}_{[2 \sqrt{\Lambda_0}, \infty)}(\nu) \sqrt{\nu^2 - 4 \Lambda_0}  < 0,
\end{equation}
where  $\boldsymbol{1}_\Omega(\cdot)$ denotes the characteristic function of the measurable set $\Omega \subset \R$.
\end{lemma}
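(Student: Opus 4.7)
The plan is to reduce the eigenvalue problem for $\cA$ to an eigenvalue problem for $\cL$ and then apply the spectral gap already established in Lemma \ref{lemspecboundL}. Writing $U=(u,v)\in D(\cA)=H^2\times H^1$ and expanding $\cA U=\lambda U$ componentwise gives $v=\lambda u$ and $-\cL u-\nu v=\lambda v$, which together produce the scalar identity
\[
 \cL u \;=\; -(\lambda^2+\nu\lambda)\,u,
\]
in $L^2$. Since $\lambda\neq 0$, a nonzero $U$ forces $u\neq 0$, so $u\in H^2$ is a genuine eigenfunction of $\cL$ with eigenvalue $\Lambda:=-(\lambda^2+\nu\lambda)$.

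Next I would invoke Lemma \ref{lemspecboundL} to assert that $\Lambda\in\sigma_{\mathrm{pt}}(\cL)\subset\{0\}\cup[\Lambda_0,\infty)$ and split into two cases. If $\Lambda=0$, then $\lambda(\lambda+\nu)=0$, and excluding $\lambda=0$ leaves $\lambda=-\nu$, for which $\Re\lambda=-\nu$; this clearly satisfies the claimed bound in both of its regimes. If $\Lambda\geq\Lambda_0>0$, the quadratic $\lambda^2+\nu\lambda+\Lambda=0$ has roots
\[
 \lambda_{\pm}=\tfrac{1}{2}\bigl(-\nu\pm\sqrt{\nu^{2}-4\Lambda}\bigr).
\]
When $\nu^{2}<4\Lambda$ the two roots are complex conjugate and $\Re\lambda_{\pm}=-\nu/2$. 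When $\nu^{2}\geq 4\Lambda$ (which in particular forces $\nu\geq 2\sqrt{\Lambda_0}$) both roots are real, and the larger one is
\[
 \Re\lambda_{+}=\tfrac{1}{2}\bigl(-\nu+\sqrt{\nu^{2}-4\Lambda}\bigr)\;\leq\;\tfrac{1}{2}\bigl(-\nu+\sqrt{\nu^{2}-4\Lambda_0}\bigr),
\]
using that $\Lambda\geq\Lambda_0$ makes the square root monotonically decreasing in $\Lambda$. Combining the two subcases gives exactly the bound \eqref{ptspectbd}.

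Finally, strict negativity is a one-line check. For $0<\nu<2\sqrt{\Lambda_0}$ the indicator vanishes and the bound is $-\nu/2<0$. For $\nu\geq 2\sqrt{\Lambda_0}$ one has $\sqrt{\nu^{2}-4\Lambda_0}<\nu$ because $\Lambda_0>0$, so $\tfrac{1}{2}(-\nu+\sqrt{\nu^{2}-4\Lambda_0})<0$. This closes the argument.

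There is essentially no conceptual obstacle in this lemma: everything reduces to the quadratic formula once the scalar equation $\cL u=-(\lambda^{2}+\nu\lambda)u$ is isolated and the spectral bound of Lemma \ref{lemspecboundL} is invoked. The only care required is to observe that $u$ automatically lies in $D(\cL)=H^{2}$ from the domain of $\cA$, so that the reduction to the self-adjoint scalar operator $\cL$ is legitimate and $\Lambda$ is a true $L^{2}$-eigenvalue rather than just a spectral value.
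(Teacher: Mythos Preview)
Your argument is correct and follows essentially the same route as the paper: reduce $\cA U=\lambda U$ to the scalar equation $\cL u=-\lambda(\lambda+\nu)u$, use the spectral gap for $\cL$, and analyze the resulting quadratic in $\lambda$. The only differences are cosmetic. You invoke Lemma~\ref{lemspecboundL} directly to place $\Lambda=-\lambda(\lambda+\nu)$ in $\{0\}\cup[\Lambda_0,\infty)$ and then read off the roots via the quadratic formula, whereas the paper re-derives the bound $\lambda(\lambda+\nu)\le-\Lambda_0$ by decomposing $u=u_\perp+\alpha\,\partial_x\brt$ and pairing against $u_\perp$, and then argues through the real and imaginary parts of $\lambda(\lambda+\nu)$. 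Your treatment of the case $\Lambda=0$ (yielding $\lambda=-\nu$) is actually more explicit than the paper's, which tacitly assumes $u_\perp\neq 0$ when dividing by $\|u_\perp\|_{L^2}^2$; in that degenerate case $u$ is a multiple of $\partial_x\brt$ and one indeed gets $\lambda=-\nu$, which your bound covers.
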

\begin{proof}
Assume 
$\lambda \in \ptsp(\cA)\setminus\{0\}$, and  $U = (u,v) \in D(\cA) = H^2 \times H^1$ is such that $\cA U = \lambda U$. Thus, $\lambda u = v$,  $(\lambda + \nu)v + \cL u = 0$, and by substitution, we get 
\[
 \cL u + \lambda (\lambda + \nu) u = 0, 
\]
where $u \in H^2 = D(\cL)$. Therefore, $u$ is an eigenfunction for $-\lambda(\lambda + \nu) \in \ptsp(\cL)$, where $\lambda(\lambda + \nu) \in \R$ by $\cL$'s self-adjointness. By decomposing $L^2$ into $\lspan{\{\partial_x \brt\}}$ and its perpendicular space $L^2_\perp$, we get 
 $u = u_\perp + \alpha \partial_x \brt$, $v = v_\perp + \beta \partial_x \brt$, for some $\alpha, \beta \in \C$. We substitute the latter expressions for $u$ and $v$ into $\cA U = \lambda U$ to get 
 \[
\lambda u_\perp = v_\perp, \quad \beta = \lambda \alpha, \quad\text{ and }
\]
\[
\cL u_\perp + \lambda(\lambda + \nu) (u_\perp +  \alpha \partial_x \brt ) = 0.
\]
Because $\langle u_\perp, \partial_x \brt \rangle_{L^2} = 0$ and $\lambda(\lambda + \nu) \in \R$, by taking the $L^2$-product of $u_\perp$ with the latter equation,  we have that 
\[
\begin{aligned}
0 &= \langle \cL u_\perp, u_\perp \rangle_{L^2} + \lambda(\lambda + \nu) \| u_\perp \|_{L^2}^2 + \lambda(\lambda + \nu) \langle \alpha \partial_x \brt , u_\perp \rangle_{L^2} \geq (\Lambda_0 + \lambda^2 + \lambda \nu ) \| u_\perp \|_{L^2}^2.
\end{aligned}
\]
Hence, we obtain the bound 
\begin{equation}
\label{lambound}
\lambda ( \lambda + \nu) \leq - \Lambda_0,
\end{equation}
and the following relations for the real and imaginary parts of $\lambda(\nu+\lambda)$,  
\begin{subequations}
\label{las}
\begin{align}
\Im (\lambda(\lambda + \nu)) &= (\Im \lambda) (\nu + 2 \Re \lambda) = 0, \label{lasa}\\
- \Lambda_0 \geq \Re (\lambda(\lambda + \nu)) &= (\Re \lambda)^2 - (\Im \lambda)^2 + \nu \Re \lambda. \label{lasb}
\end{align}
\end{subequations}

By equation  \eqref{lasa}, either $\Re \lambda = - \tfrac{1}{2}\nu$
 or  $\lambda \in \R$.  The former case satisfies \eqref{ptspectbd}, so there is nothing to prove. Now, we assume $\lambda \in \R$ and consider two regimes for the physical parameter\footnote{Notice that $\cL$ and its spectral bound $\Lambda_0$ do not depend on $\nu$} $\nu>0$: (i) $\nu \in (0, 2 \sqrt{\Lambda_0})$ and (ii) $\nu \in [2 \sqrt{\Lambda_0},\infty)$. In case (i), equation \eqref{lasb} becomes $\lambda^2+\nu\lambda+\Lambda_0 \le 0$, but since its discriminant is negative, there are no real solutions to the inequality—a contradiction.  In case (ii), $\lambda^2+\nu\lambda+\Lambda_0 \le 0$ holds only for 
 \[
\lambda \in \big[ - \tfrac{1}{2}\nu - \tfrac{1}{2}\sqrt{\nu^2 - 4 \Lambda_0}, - \tfrac{1}{2}\nu + \tfrac{1}{2}\sqrt{\nu^2 - 4 \Lambda_0} \big].
\]
By combining all the above cases, we conclude the result.

\end{proof}


\subsection{Essential spectrum stability}
\label{subsec:essentialesp}
In this section, we study operator $\cA$'s essential spectrum and prove Theorem~\ref{mainspthm}. To that end, we define the following asymptotic block matrix operator,
\begin{equation}
\label{defAinf}
\cA_\infty : H^1 \times L^2 \to H^1 \times L^2, \qquad \cA_\infty := \begin{pmatrix} 0 & \Id \\ - \cL_\infty & - \nu \Id\end{pmatrix},
\end{equation}
with dense domain $D(\cA_\infty) = H^2 \times H^1$. Once again, with a slight abuse in notation, the operator $\cL_\infty$ in $\cA_\infty$ refers to  \eqref{defLinf}  restricted to  $H^1$, namely, 
\[
\widetilde{\cL}_\infty := {\cL_\infty}_{|H^1}, \quad \widetilde{\cL}_\infty : H^1 \to L^2,
\text{ and } D(\widetilde{\cL}_\infty) = H^2 \subset H^1, 
\]
that is $\widetilde{\cL}_\infty u := \cL_\infty u$ for every $u \in H^2$. In the sequel, we simplify notation by writing $\cL_\infty$ instead of its restriction $\widetilde{\cL}_\infty$.
Hence, $\cA_\infty$'s energy base space is $H^1 \times L^2$, and for any $U = (u,v) \in D(\cA_\infty)$, we have $\cA_\infty U = (v, - \cL_\infty u -\nu v) \in H^1 \times L^2$.

\begin{lemma}
\label{lemAinfcb}
The  operator $\cA_\infty : H^1 \times L^2 \to H^1 \times L^2$ is closed and onto.
\end{lemma}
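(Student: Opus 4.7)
The plan is to treat the two assertions separately, both following closely the arguments already used for $\cA$ and $\cL_\infty$ in the previous sections.

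For closedness, I would mimic the proof of Lemma \ref{lemAclosed}. Let $U_j=(u_j,v_j)\in D(\cA_\infty)=H^2\times H^1$ be such that $U_j\to U=(u,v)$ in $X=H^1\times L^2$ and $\cA_\infty U_j\to (f,g)$ in $X$. Then $v_j\to f$ in $H^1$ and $-\cL_\infty u_j-\nu v_j\to g$ in $L^2$. From $v_j\to v$ in $L^2$ and $v_j\to f$ in $H^1$ I conclude $v=f\in H^1$, and therefore $\cL_\infty u_j\to -g-\nu f$ in $L^2$. Since $u_j\to u$ in $L^2$ (via the $H^1$-convergence of $u_j$), the closedness of $\cL_\infty:L^2\to L^2$, which holds because $\cL_\infty$ is self-adjoint on $L^2$ by Lemma \ref{lempropsLinf} \hyperref[propaaa]{\rm{(a)}}, yields $u\in D(\cL_\infty)=H^2$ and $\cL_\infty u=-g-\nu f$. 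Hence $U\in D(\cA_\infty)$ and $\cA_\infty U=(f,g)$, proving $\cA_\infty$ is closed.

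For surjectivity, I would take an arbitrary $F=(f,g)\in H^1\times L^2$ and try to solve $\cA_\infty U=F$ explicitly. The two components of the equation read $v=f$ and $-\cL_\infty u-\nu v=g$, that is,
\begin{equation*}
\cL_\infty u=-g-\nu f.
\end{equation*}
Since $f\in H^1\subset L^2$ and $g\in L^2$, the right-hand side belongs to $L^2$. By Lemma \ref{lemsolLinf}, there exists a unique $u\in H^2$ solving this equation. Setting $v:=f\in H^1$ gives $U=(u,v)\in H^2\times H^1=D(\cA_\infty)$ with $\cA_\infty U=F$, so $\cA_\infty$ is onto.

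There is essentially no obstacle here: closedness is a direct transfer of the argument for $\cA$ (the only input being that $\cL_\infty$ is closed, already established in Lemma \ref{lempropsLinf}), and surjectivity is immediate from the solvability of $\cL_\infty u=h$ for arbitrary $h\in L^2$ in Lemma \ref{lemsolLinf}. The only subtlety to keep in mind, as pointed out in Remark \ref{remwhyH1}, is the slight abuse of notation regarding the domain of $\cL_\infty$: it is enough that for $u\in H^2$ we have both $\cL_\infty u\in L^2$ (for $\cA_\infty U\in X$) and the closedness/solvability properties inherited from the $L^2\to L^2$ realization.
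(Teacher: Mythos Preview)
Your proposal is correct and matches the paper's proof essentially verbatim: the paper explicitly says the closedness argument is identical to that of Lemma \ref{lemAclosed} (and omits it), and for surjectivity it solves $v=f$ and $-\cL_\infty u = g+\nu f$ via Lemma \ref{lemsolLinf}, exactly as you do.
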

\begin{proof}
The proof of the closedness of $\cA_\infty$ is the same as that of Lemma \ref{lemAclosed}, and we omit it. To show that $\cA_\infty$ is onto, notice that for any $F = (f,g) \in H^1 \times L^2$ the equation $\cA_\infty U = F$ with $U = (u,v) \in D(\cA_\infty) = H^2 \times H^1$ is equivalent to the system
\[
v = f, \qquad - \cL_\infty u = g + \nu f.
\]
By Lemma \ref{lemsolLinf}, there exists a unique solution $u \in H^2$ to the equation $- \cL_\infty u = g + \nu f \in L^2$. Since $v=f$ in $H^1$,   $ \cR(\cA_\infty)=H^1 \times L^2 $, as claimed.
\end{proof}

Thus, $\cA_\infty$ is a closed, densely defined operator with full range. The following result localizes its spectrum.

\begin{lemma}
\label{lemAinfsb}
Let $\lambda \in \sigma(\cA_\infty)$, then
\begin{equation}
\label{ddstar}
\Re \lambda \leq - \tfrac{1}{2} \nu + \tfrac{1}{2}  \boldsymbol{1}_{[2, \infty)}(\nu) \sqrt{\nu^2 - 4}  < 0.
\end{equation}
\end{lemma}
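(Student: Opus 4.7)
The plan is to reduce the spectral problem for the block operator $\cA_\infty$ to the (already known) spectral problem for the scalar asymptotic operator $\cL_\infty$, and then solve an explicit quadratic inequality in the complex plane.

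First I would establish the equivalence: $\lambda \in \rho(\cA_\infty)$ if and only if $\mu(\lambda) := -\lambda(\lambda+\nu) \in \rho(\cL_\infty)$. Given $F=(f,g) \in H^1 \times L^2$, the resolvent equation $(\cA_\infty - \lambda)U = F$ with $U=(u,v) \in D(\cA_\infty) = H^2\times H^1$ is the system
\begin{equation*}
v - \lambda u = f, \qquad -\cL_\infty u - (\lambda+\nu) v = g.
\end{equation*}
Substituting $v = \lambda u + f$ in the second equation yields
\begin{equation*}
(\cL_\infty - \mu(\lambda))\, u = -g -(\lambda+\nu) f,
\end{equation*}
and $v$ is recovered from $u$ by $v = \lambda u + f \in H^1$ whenever $u \in H^2$. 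Therefore, solvability (with $H^1\times L^2$-bounded inverse) of the block equation is equivalent to solvability (with $L^2 \to H^2$-bounded inverse) of the scalar equation, which is controlled by Lemma \ref{lemsolLinf} and Lemma \ref{lemressol}, together with the closedness already checked in Lemma \ref{lemAinfcb}. Consequently,
\begin{equation*}
\sigma(\cA_\infty) = \{ \lambda \in \C : -\lambda(\lambda+\nu) \in \sigma(\cL_\infty)\}.
\end{equation*}

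Next I would invoke Lemma \ref{lempropsLinf}~(d), giving $\sigma(\cL_\infty) \subset [1,\infty)$, so that any $\lambda \in \sigma(\cA_\infty)$ satisfies $-\lambda(\lambda+\nu) \in [1,\infty)$, i.e.\ $\lambda^2 + \nu\lambda \in (-\infty,-1]$. Writing $\lambda = a + ib$, the imaginary part $b(2a+\nu) = 0$ forces either $b = 0$ or $a = -\nu/2$. In the first case $\lambda \in \R$ must solve $\lambda^2 + \nu \lambda + 1 \leq 0$, which has real solutions only when the discriminant $\nu^2 - 4 \geq 0$, in which case
\begin{equation*}
\lambda \in \bigl[ -\tfrac12\nu -\tfrac12\sqrt{\nu^2 - 4},\; -\tfrac12\nu + \tfrac12\sqrt{\nu^2 - 4}\bigr],
\end{equation*}
so $\Re\lambda \leq -\tfrac12\nu + \tfrac12\sqrt{\nu^2-4}$. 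In the second case $\Re \lambda = -\nu/2$, which already satisfies the claimed bound. Combining both cases yields \eqref{ddstar}, and strict negativity follows from $\sqrt{\nu^2-4}<\nu$ for $\nu \geq 2$.

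The only nontrivial step is the equivalence of spectra; after that, the inequality is elementary algebra. Within the equivalence, the delicate point is to justify that the inverse of $\cA_\infty - \lambda$ constructed from $(\cL_\infty - \mu(\lambda))^{-1}$ is bounded as an operator on $H^1 \times L^2$ (not merely well-defined): boundedness of the $u$-component in $H^2$ (hence $H^1$) comes from Lemma \ref{lemressol}, and boundedness of $v = \lambda u + f$ in $H^1$ follows from the $H^2 \hookrightarrow H^1$ bound on $u$. This is the main, though still routine, technical obstacle.
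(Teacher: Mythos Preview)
Your proof is correct and follows essentially the same approach as the paper: reduce the block resolvent equation to the scalar equation $(\cL_\infty + \lambda(\lambda+\nu))u = \tilde f$, invoke Lemma~\ref{lemressol} for the $H^2$-bound when $\lambda(\lambda+\nu)\notin(-\infty,-1]$, and then do the same real/imaginary case analysis. The only minor overstatement is the claimed \emph{equality} $\sigma(\cA_\infty)=\{\lambda:-\lambda(\lambda+\nu)\in\sigma(\cL_\infty)\}$, for which you only argue (and only need) the inclusion $\subset$; the paper likewise proves just the inclusion \eqref{locspAinf}.
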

\begin{proof}

Let $\lambda \in \C$, $U = (u,v) \in H^2 \times H^1$ and $F = (f,g) \in H^1 \times L^2$ be such that $(\lambda-\cA_\infty) U = F$. Thus, 
$\lambda u - v = f$, $\cL_\infty u + (\lambda + \nu) v = g$. By substitution, we arrive at
\[
\big( \cL_\infty + \lambda(\lambda + \nu) \big) u = g + (\lambda + \nu) f,
\]
where $g + (\lambda + \nu) f \in L^2$ for any $\nu > 0$. By Remark \ref{lemressol}, there exist a unique solution $u \in H^2$ to the latter equation provided  $\lambda(\lambda + \nu) \in \C \backslash (-\infty, -1]$. Moreover, there exists 
$C( \lambda, \nu) > 0$, such that 
\[
\| u \|_{H^1} \leq \| u \|_{H^2} \leq C(\lambda,\nu) \| g + (\lambda + \nu) f \|_{L^2}.
\]
Now, because of the triangle inequality, we get
\[
\| U \|_{H^1 \times L^2}^2 = \| u \|_{H^1}^2 + \| v \|_{L^2}^2 = \| u \|_{H^1}^2 + \| f + \lambda u \|_{L^2}^2 \leq ( 1 + 2 |\lambda|) \| u \|_{H^1}^2 + 2 \|f \|_{L^2}^2,
\]
and we conclude that 
\[
\begin{aligned}
\| U \|_{H^1 \times L^2}^2 
\leq ( 1 + 2 |\lambda|) C(\lambda,\nu)^2 \| g + (\lambda + \nu) f \|_{L^2}^2 + 2 \| f \|_{L^2}^2
\leq 
\overline{C}(\lambda, \nu)  \| F \|_{H^1 \times L^2}^2,
\end{aligned}
\]
for some  $\overline{C}(\lambda, \nu)>0$. Therefore, $\lambda \in \rho(\cA_\infty)$, and 
$\sigma(\cA_\infty) \subset \big\{ \lambda \in \C \, : \, \lambda (\lambda + \nu) \in (-\infty, -1] \big\}$.
The relation that defines the latter set is equivalent to the following system of equations, 
\begin{equation}
\label{starry}
\begin{aligned}
\Im (\lambda(\lambda + \nu)) &= (\Im \lambda) (\nu + 2 \Re \lambda) = 0, \\
- 1 \geq \Re (\lambda(\lambda + \nu)) &= (\Re \lambda)^2 - (\Im \lambda)^2 + \nu \Re \lambda. 
\end{aligned}
\end{equation}
By \eqref{starry}'s first equation, either $\Re \lambda = - \tfrac{1}{2}\nu$
 or  $\lambda \in \R$.  The former case satisfies \eqref{ddstar}, so there is nothing to prove. Now, we assume $\lambda \in \R$ and consider two regimes for $\nu>0$: (i) $\nu \in (0, 2)$ and (ii) $\nu \in [2,\infty)$. In case (i), the \eqref{starry}'s second equation becomes $\lambda^2+\nu\lambda+1 \le 0$, but since its discriminant is negative, there are no real solutions to the inequality—a contradiction.  In case (ii), $\lambda^2+\nu\lambda+1 \le 0$ holds only for 
 \[
\lambda \in \big[ - \tfrac{1}{2}\nu - \tfrac{1}{2}\sqrt{\nu^2 - 4 }, - \tfrac{1}{2}\nu + \tfrac{1}{2}\sqrt{\nu^2 - 4 } \big].
\]
By combining all the above cases, we conclude the results.

\end{proof}

\begin{proof}[\bf Proof of Theorem \ref{mainspthm}]
By Weyl's spectrum splitting, $\sigma(\cA) = \ess(\cA)\cup \ptsp(\cA)$, and we study each part separately. 
Regarding $\cA$'s point spectrum, we have that 
\begin{equation}
\ptsp(\cA) \subset \{ 0 \} \cup \{ \lambda \in \C \, : \, \Re \lambda \leq - \tfrac{1}{2} \nu + \tfrac{1}{2} \sqrt{\nu^2 - 4 \Lambda_0} \, \boldsymbol{1}_{[2 \sqrt{\Lambda_0}, \infty)}(\nu) \},
\end{equation}
by lemmata \ref{lemzeroremains} and \ref{lemptspecstab}. 

For $\cA$'s essential spectrum, we claim first that operator $\cA$ is a relatively compact perturbation of $\cA_\infty$. Indeed,  let $\lambda \in \rho(\cA_\infty)$ and  $\{ U_j \}_{j \in \N}$ be a bounded sequence in $H^1 \times L^2$. Thus, $\{(\lambda - \cA_\infty)^{-1} U_j \}_{j\in\N}\subset D(\cA_\infty)$ is a bounded sequence in $H^2 \times H^1$ because  $(\lambda - \cA_\infty)^{-1}$ is a bounded operator. Let 
$( f_j, g_j) = ((\lambda - \cA_\infty)^{-1} U_j)^T$ where $U^T$ denotes vector's $U$ transpose. We have that 
\[
( \cA_\infty- \cA ) (\lambda - \cA_\infty)^{-1} U_j = \begin{pmatrix} 0 & 0 \\ \cL_\infty - \cL & 0\end{pmatrix} \begin{pmatrix} f_j \\ g_j 
\end{pmatrix} = \begin{pmatrix} 0 \\ (\cL_\infty- \cL) f_j
\end{pmatrix}.
\]
Now, because $\cL_\infty - \cL :H^2\to L^2$ is compact by Theorem \ref{main_res_L-Linf} and $\{ f_j \}_{j \in \N}$ is bounded in $H^2$, the sequence $\{(\cL_\infty - \cL) f_j\}_{j\in \N} \subset H^1$ is bounded and has a convergent subsequence in $L^2$. Therefore, $\{( \cA_\infty- \cA ) (\lambda - \cA_\infty)^{-1} U_j\}_{j\in \N}$ has a convergent subsequence in $H^1 \times L^2$, and  $( \cA_\infty- \cA ) (\lambda - \cA_\infty)^{-1} $ is compact operator on $H^1 \times L^2$ for every $\lambda \in \rho(\cA_\infty)$, as claimed. 

Now, since $\cA$ is a relatively compact perturbation of $\cA_\infty$,  $\ess (\cA) = \ess(\cA_\infty)$ by 
Weyl's essential spectrum theorem (see \cite{KaPro13}, p. 29). Hence, from Lemma \ref{lemAinfsb} we get  
\[
\ess (\cA) = \ess(\cA_\infty) \subset \{ \lambda \in \C \ | \ \Re \lambda \leq - \tfrac{1}{2} \nu + \tfrac{1}{2}  \boldsymbol{1}_{[2, \infty)}(\nu) \sqrt{\nu^2 - 4} \}.
\]

Finally, we combine $\ess(\cA)$ and $\ptsp(\cA)$'s bounds to conclude that 
$\sigma(\cA)$ satisfies \eqref{eq:spectralloc}. 
\end{proof}

\section{Semigroup generation and decay}
\label{secsemigroup}

In this section, we use semigroup theory to show that solutions to Cauchy problem \eqref{linsyst} generate a semigroup of quasi-contractions. Moreover, by restricting operator $\cA$ to a codimension-one subspace of $H^1\times L^2$, we obtain an exponentially decaying $C_0$-semigroup.


\begin{theorem}
\label{thm_semigroup_gen_1}
Let $\nu>0$ and $\cA$ as in \eqref{evproblem} with $D(\cA) = H^2\times H^1$ . Then, $\cA$ is the infinitesimal generator of a $C_0$-semigroup $\{e^{t\cA}\}_{t\geq 0}$ of quasicontractions, namely there exists $\omega \in \R$ such that 
\[
\|e^{t\cA}U\|_{H^1\times L^2} \leq e^{\omega t} \|U\|_{H^1\times L^2} ,
\]
for all $t \geq 0$ and every   $U\in H^1\times L^2$.
\end{theorem}

From basic semigroup theory (cf. \cite{EN00,Pa83}), the last theorem  implies 
\[
\frac{d}{dt} \big( e^{t\cA} U \big) = e^{t\cA} \cA U = \cA(e^{t\cA}U)
\]
for $U \in H^2 \times H^1$.

Before presenting our next result, we must define the following relevant spaces and sets. 
Let $\Phi_0:=(\nu\partial_x \brt,\partial_x \brt)$, then $H^1\times L^2=\lspan\{\Phi_0\}\oplus (H^1\times L^2)_{\perp}$ where $(H^1\times L^2)_{\perp}$ is  $\Phi_0$'s 
$L^2$-orthogonal complement, namely  
\begin{equation}
    \label{eq:setX_1}
    (H^1\times L^2)_{\perp}:=\{F\in H^1\times L^2 \ |\ \pld{F}{\Phi_0}=0\}.
\end{equation}
For $\nu>0$ fixed and $\cA$ is as in \eqref{evproblem} with $D(\cA) = H^2\times H^1$, let   
\begin{equation}
    \label{eq:setD_1}
D_\perp := \{U \in D(\cA)  \cap (H^1\times L^2)_{\perp} \ |\ \cA U \in (H^1\times L^2)_{\perp} \}.
\end{equation}

\begin{theorem}
\label{lemmaeight}
For $\cA$ as in \eqref{evproblem}, let $\cA_\perp : (H^1\times L^2)_{\perp} \to (H^1\times L^2)_{\perp}$ be its restriction to $D_\perp$ given by 
\begin{equation}
    \label{eq:opA_1}
 \cA_\perp U := \cA U, \qquad U \in D_\perp.
\end{equation}
Then, $\cA_\perp$ is the generator of an exponentially decaying $C_0$-semigroup $\{e^{t\cA_\perp}\}_{t\geq 0}$, namely,  there exists uniform constants $M \geq 1$ and $\omega_1 > 0$, such that
\begin{equation}
\label{lindecay}
\|e^{t \cA_\perp} U\|_{H^1\times L^2} \leq M e^{-\omega_1 t} \|U\|_{H^1\times L^2},
\end{equation}
for all $t \geq 0$ and every $U \in (H^1\times L^2)_{\perp}$.
%
\end{theorem}

\subsection{Generation of the semigroup}

In this subsection, we present the proof of Theorem~\ref{thm_semigroup_gen_1}. The  proof strategy is to apply the classical Lumer-Phillips theorem (see, e.g., Theorem 12.22, p. 407, in \cite{ReRo04}). To this end, we need to show that $D(\cA)$ is densely defined, $\cA-\nu \Id$ is onto, and a resolvent estimate on $\cA$, namely there exists $\eta_0>0$ such that $\Re \! \pwse{\cA U}{U}\le \eta_0\nwse{U}^2$. 
The remainder of this subsection is devoted to proving the latter three. 
\medskip

We begin by presenting some preparatory results.  The following result is necessary because the intersection does not distribute the direct sum.
  
\begin{lemma}\label{lem:H1split}
Let $L^2_\perp$ be  $\lspan\{\partial_x\brt\}$'s $L^2$-orthogonal complement, and $H^k_\perp:= H^k\cap L^2_\perp$ for $k\in\{1,2\}$. Then, for every $u\in H^k$ there exist unique $\alpha\in \C$ and $u_\perp\in H^k_\perp$ such that $u = u_\perp +\alpha\partial_x\brt$. Moreover, 
for every $U\in H^1\times L^2$ there exist unique $\alpha\in \C$ and $U_\perp\in H^k_\perp \times L^2$ such that $U = U_\perp + \alpha \Theta$ where $\Theta = (\partial_x\brt,0)$.
\end{lemma}

\begin{proof}
Let $k$ be fixed and $\bar{u}\in H^k$.
Because $L^2 = L^2_\perp  \oplus \lspan \{\partial_x\brt\}$, there exists unique $\alpha\in \C$ and $u_\perp\in L^2_\perp$ such that 
$u = u_\perp + \alpha \partial_x\brt$.
Moreover, since $ \partial_x\brt\in H^k$ by Proposition \ref{propNeelw} \hyperref[propc]{\rm{(c)}}, $u_\perp\in H^k$,
and the first follows. Now, the splitting for $U\in H^k\times L^2$ is a straight forward consequence of the $H^k$ case and the result follows.  
\end{proof}

\begin{definition}
\label{lem:sesqui_form}
The bi-linear form $a:H^1_\perp\times H^1_\perp\rightarrow \C$ associated to the elliptic operator $\cL$, namely $\pld{\opl u}{v} = a[u,v]$ for every $u\in H_\perp^2$ and $v\in H_\perp^1$,  is given by \begin{equation}
\label{eq:a}
a\left[ u, v\right] = \pld{\partial_x u}{\partial_x v} + b[s_{\theta}u,s_{\theta}v]-\pld{c_{\theta}u}{v}, 
\end{equation}    
where $b$ is as in \eqref{defbilinearB}. 
By definition, $a[\cdot,\cdot]$ is a sesquilinear Hermitian  form, and its positivity follows from Proposition \ref{propL0}.
\end{definition}

By the arguments in \cite{CMO07}, the inner products $a[\cdot,\cdot]$ and $\phu{\cdot}{\cdot}$ are equivalent in $H_\perp^1$. Denote by $\|u\|_a = \sqrt{a[u,u]}$ the induced norm, then there exist two constants $0< k_0 < K_0$ such that $k_0\|u\|_{H^1}\le \| u \|_{a} \le  K_0\|u\|_{H^1}$ for every $u\in H_\perp^1$.
In what follows, we also will consider the functional space $Z_\perp=H_\perp^1\times L^2$ with two norms,  $\| (u,v) \|_{Z_\perp} = \sqrt{\|u\|_a^2+\| v\|_{L^2}^2}$ and  $\|(u,v) \|_{2} =\|u\|_a +\| v\|_{L^2}$. 
By elementary algebra, $\sqrt{a^2+b^2} \leq a+b \leq \sqrt{2} \sqrt{a^2+b^2}$ for $a,b\geq 0$; therefore both norms are equivalent, that is   
\[\|U_\perp\|_{Z_\perp}\le \|U_\perp\|_{2} \le \sqrt{2}\|U_\perp \|_{Z_\perp}\] for every $U_\perp\in H_\perp^1$. 

\begin{lemma} \label{lem:equi_norms}
The space $Z_\perp=H_\perp^1\times L^2$ is a Hilbert space with respect to the inner product $\pwse{\cdot}{\cdot}$, and there exist constants $0<k < 1 < K$ such that 
\[
k\|U_\perp\|_{H^1\times L^2} \le \| U_\perp \|_{Z_\perp} \le K \|U_\perp\|_{H^1\times L^2} 
\]
for every $U_\perp\in Z_\perp$, where $\| \cdot\|_{Z_\perp}$ is as in the latter paragraph.  Moreover, $\|\cdot\|_{Z_\perp}$ is induced by the inner product $\langle\cdot,\cdot\rangle_{Z_\perp}:Z_\perp \times Z_\perp\to \C$ given by
    \[
    \langle U_\perp,V_\perp\rangle_{Z_\perp} := a[u,w] + \pld{v}{z},  \]
    for $U_\perp = (u,v)$ and $V_\perp = (w,z)$ in $Z_\perp$.  Henceforth, $\langle\cdot,\cdot\rangle_{Z_\perp}$ and  $\pws{\cdot}{\cdot}$ are equivalent in $Z_\perp$. 
\end{lemma}

Lemma~\ref{lem:equi_norms} is relevant because the constants on the norm equivalence satisfy $k < 1 < K$.

\begin{proof}[Proof of Lemma~\ref{lem:equi_norms}]
The Cartesian product $H_\perp^1\times L^2$ is also a Hilbert space with respect to the induced inner product $\|\cdot,\cdot\|_{Z_\perp}$ since $a[\cdot,\cdot]$ and $\phu{\cdot}{\cdot}$ are equivalent in $H^1_\perp$ which is a Hilbert space equipped with the latter inner product. Now, because $ k_0\|u\|_{H^1}\le \|u\|_{a} \le K_0\|u \|_{H^1}$ for every  $u\in H_\perp^1$, and $ \|U_\perp\|_{Z_\perp}\le \|U_\perp\|_{2} \le \sqrt{2}\|U_\perp \|_{Z_\perp}$ for every $U_\perp\in H_\perp^1$, we have that 
\[
\min\{1/\sqrt{2},k_0/\sqrt{2}\}\|U_\perp\|_{Z_\perp}\le \|U_\perp\|_{H^1\times L^2} \le \max\{1,K_0\}\|U_\perp \|_{Z_\perp}.
\]
The rest of the lemma's claims follow directly from the latter equivalence between the norms. 
\end{proof}

\begin{lemma}
\label{lem:prodZ}
For $U,V\in Z=H^1\times L^2$, let $U=U_\perp+ \alpha\Theta$ and $V=V_\perp+\beta\Theta$ be the decomposition of Lemma~\ref{lem:H1split}, and $\langle \cdot,
\cdot\rangle_{Z_\perp}$ as in Lemma~\ref{lem:equi_norms}. 
We define  
\[
\langle U, V\rangle_{Z} 
:=\langle U_\perp,V_\perp\rangle_{Z_\perp} + 
\pwse{U}{\beta \Theta} + \pwse{\alpha\Theta}{V} + \alpha\beta^* 
\nwse{\Theta}^2.
\]
Then, $\langle\cdot,\cdot\rangle_{ Z}: Z\times Z \to \C$ is an inner product equivalent to $\pwse{\cdot}{\cdot}$.
\end{lemma}

\begin{proof}
The form $\langle\cdot,\cdot\rangle_{ Z}: Z\times Z \to \C$ is Hermitian and sesquilinear because it is the sum of four sesquiliner Hermitian inner products in the corresponding functional spaces.  Next, for $U=U_\perp+ \alpha\Theta\in Z$, we have that
\[
\langle U,U\rangle_{Z} =\|U_\perp\|_{Z_\perp}^2 + 2 \Re\!\!\pwse{U}{\alpha\Theta}+\nwse{\alpha\Theta}^2.
\]
Adding and subtracting $\nwse{U_\perp}^2$ to the latter equation, yields
\begin{equation}
\label{eq:posdefX}
 \langle U,U\rangle_{Z} =\|U_\perp\|_{Z_\perp}^2 + \nwse{U}^2-\nwse{U_\perp}^2.
\end{equation}
Now, by Lemma~\ref{lem:equi_norms}, there exists positive constants $k<1<K$ such that 
\[
k\|U_\perp\|_{H^1\times L^2} \le \| U_\perp \|_{Z_\perp} \le K \|U_\perp\|_{H^1\times L^2}. 
\]
Combining this equivalence between norms with \eqref{eq:posdefX}, we get 
 \[
 \nwse{U}^2-(1-k^2)\nwse{U_\perp}^2
 \le \langle U,U\rangle_{Z} 
\le  \nwse{U}^2+(K^2-1)\nwse{U_\perp}^2, 
 \]
and because $\nwse{U}^2\ge \nwse{U_\perp}^2$ with equality only if $\alpha=0$, we conclude
\[
 k^2\nwse{U}^2
 \le \langle U,U\rangle_{Z} 
\le  K^2\nwse{U}^2.
 \]
Therefore, $\langle\cdot,\cdot\rangle_{ Z}$ and $\pwse{\cdot}{\cdot}$ are equivalent inner products in $Z$, as claimed. 
\end{proof}

\begin{lemma} \label{lemF4} There exists $\eta_0>0$ depending only on $\brt$,  such that 
\[
\Re \! \pwse{\cA  U}{ U}\leq \eta_0 \nwse{U}^2,
\]
for every $ U \in D(\cA)$.
\end{lemma}

\begin{proof}
Recall that  $D(\cA)=H^2\times H^1$, denote $Z_\perp = H^1_\perp\times L^2$, and let $U = U_\perp + \alpha \Theta\in D(\cA)$ as in Lemma~\ref{lem:H1split}. Regarding $U_\perp$'s components, $U_\perp=(u_\perp,v)$ where $u_\perp\in H^1_\perp$ and $v=v_\perp + \beta \partial_x\brt\in H^1$ with $v_\perp\in H^1_\perp$ and $\beta\in \C$.

By Lemma~\ref{lemzeroremains},
$\Theta$ is an eigenfunction with zero eigenvalue $\cA$. Thus, 
\[
\cA  U = \cA U_\perp = V_\perp+\beta \Theta, \quad \text{where } \quad V_\perp=(
    v_\perp, -\nu v-\opl u_\perp)\in Z_\perp, 
\]
and 
\begin{equation}
\label{eq:auxLP0}
    \left\langle \cA  U, U\right\rangle_{Z}
=\langle V_\perp,U_\perp\rangle_{Z_\perp} + 
     \pwse{V}{\alpha\Theta} + \pwse{\beta \Theta}{U} + \alpha^*\beta 
    \nwse{\Theta}^2,
\end{equation}
for $\left\langle \cdot, \cdot\right\rangle_{Z}$   as given in Lemma~\ref{lem:prodZ} and $V=  V_\perp+\beta \Theta$. 
By Lemma~\ref{lem:equi_norms},
we have that
\[
\langle V_\perp,U_\perp\rangle_{Z_\perp} 
= a[v_\perp,u_\perp] 
-\langle \opl u_\perp,v_\perp\rangle_{L^2} 
-\nu\langle  v,v\rangle_{L^2} 
-\beta^*\langle \opl u_\perp,
\partial_x\brt
\rangle,
\]
and since $\langle \opl u_\perp,v_\perp\rangle_{L^2} =a[u_\perp,v_\perp]$, $\opl$ is self adjoint, and $\opl\partial_x\brt=0$ by Proposition~\ref{propL0},  
we get
\begin{equation}
\label{eq:auxLP1}
\langle V_\perp,U_\perp\rangle_{Z_\perp} 
= 2i \, \Im a[w,u] 
-\nu\|v\|_{L^2}^2.  
\end{equation}
Using integration by parts, and that 
$\Theta=(\partial_x\brt,0)$, we find
\begin{equation}
\label{eq:auxLP2}
\langle V, \alpha\Theta\rangle_{H^1\times L^2} = \langle \partial_x v_\perp, \alpha\partial_x^2\brt\rangle_{L^2}
=-\langle  v_\perp, \alpha\partial_x^3\brt\rangle_{L^2}.
\end{equation}
Thus, by substitution of \eqref{eq:auxLP1} and \eqref{eq:auxLP2} into \eqref{eq:auxLP0}, we get
\[
 \left\langle \cA  U, U\right\rangle_{Z}
 = 2i \, \Im a[w,u] 
-\nu\|v\|_{L^2}^2 
-\langle  v_\perp,\alpha\partial_x^3\brt\rangle_{L^2}
+\pwse{\beta \Theta}{U} + \alpha^*\beta 
    \nwse{\Theta}^2,
\]
and, using Cauchy-Schwarz inequality, the real part of the latter equation is bounded as 
\[
2\Re \left\langle \cA  U, U\right\rangle_{Z}
\le 
\|v_\perp\|_{L^2}^2
+ \nwse{U_\perp}^2 
+ |\alpha|^2\|\partial_x^3\brt\|_{L^2}^2
+ ( |\beta|^2+2\alpha^*\beta) 
    \nwse{\Theta}^2.
\]
Since $\nld{\partial^3_{x}\brt}<\infty$ and $\nld{\partial_x\brt}\neq 0$ by  Proposition~\ref{propNeelw}, we define the constants
$C_1 :=\nws{\Theta}^2/\nld{\partial_x \brt}^2$ and $C_2:=\nld{\partial^3_x \brt}^2/\nld{\partial_x \brt}^2$, to get
\[
2\Re \left\langle \cA  U, U\right\rangle_{Z}
\le 
\|v_\perp\|_{L^2}^2
+ \nwse{U_\perp}^2 
+ (|\alpha|^2C_1 
+(|\beta|^2+2\alpha^*\beta) C_2)
\|\partial_x\brt\|_{L^2}^2
\]
Because $\max\{\|v_\perp\|_{L^2},  
|\beta|^2\|\partial_x\brt\|_{L^2} \} \le \|v\|_{L^2}^2 \le \nwse{U}^2  $, 
$ \nwse{U_\perp}^2 \le \nwse{U}^2 $, we find
\[
2\Re \left\langle \cA  U, U\right\rangle_{Z}
\le (3 + 2C_2 + 2C_1)
\nwse{U}^2.
\]
Finally, the result follows because $\langle\cdot,\cdot\rangle_{ Z}$ and  $\pwse{\cdot}{\cdot}$ are equivalent inner products by Lemma~\ref{lem:prodZ}, and the resulting constant $\eta_0=K^2(3/2+C_2+C_1)>0$ only depends on $\brt$.   
\end{proof}

\begin{proof}[\bf Proof of Theorem~\ref{thm_semigroup_gen_1}]
From Lemma~\ref{lemF4}, there exists $\eta_0>0$ such that 
\[
\Re \! \pwse{\cA  U}{ U}\leq \eta_0 \nwse{U}^2,
\]
for every $ U \in D(\cA)$. In addition, $\R^+\subset \rho(\cA)$ by Theorem~\ref{mainspthm}. Thus, by choosing $\tau > \eta_0$ with $\eta_0$ as above, we conclude that $\cA-\tau$ is onto.  Finally, 
 $\cA$ is densely defined in $H^1\times L^2$ by definition. 
 Therefore, the result follows by the classical Lumer-Phillips theorem.
\end{proof}

\subsection{The adjoint operator}
Our proof of the semigroup's exponentially decay requires the analysis of $\cA$'s formal adjoint operator $\cA^*$. This subsection is devoted to presenting this analysis. 

In the present context, because $H^1$ and $L^2$ are reflexive Hilbert spaces, then $\cA : H^1\times L^2 \to H^1 \times L^2$ with $D(\cA) = H^2 \times H^1$ has a formal adjoint which is also densely defined and closed. Moreover, $\cA^{**} = \cA$ (cf. \cite{Kat80}, Theorem 5.29, p. 168). 

\begin{lemma} 
 \label{Thetanotzero}
    The formal adjoint $\cA^*$, restricted to the domain $D(\cA)$, is given by
    \begin{equation}
    \label{eqA*}
    \left.\cA^*\right|_{D(\cA)} = \begin{pmatrix}
        0 & \cF \\ -\partial_{xx}+\Id & -\nu
    \end{pmatrix}
    \end{equation}
    where the operator $\cF:H^1\to H^{-1}$ is formally defined as the map 
    \[
   v\mapsto \cF v = -(\cS v-c_{\theta}v,\partial_x v) .
   \] Moreover, $\cF|_{H^2}=[1+(-\Delta)]^{-1}\opl$, where $[1+(-\Delta)]^{-1} u$ denotes the convolution of $u$ with the Bessel potential of order $2$.
\end{lemma}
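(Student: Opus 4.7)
The plan is to compute $\cA^*$ directly from the adjoint definition on $X=H^1\times L^2$. For $F=(f,g)$ in the restricted test space $D(\cA)=H^2\times H^1$ and $U=(u,v)\in D(\cA)$ arbitrary, the defining identity $\langle \cA U,F\rangle_X = \langle U,\cA^*F\rangle_X$ becomes, after expanding $\cA U=(v,-\cL u-\nu v)$,
\begin{equation*}
\langle v,f\rangle_{H^1} - \langle \cL u,g\rangle_{L^2} - \nu\langle v,g\rangle_{L^2} = \langle u,\Phi\rangle_{H^1}+\langle v,\Psi\rangle_{L^2},
\end{equation*}
where $(\Phi,\Psi):=\cA^*F$. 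Setting $u=0$ isolates $\Psi$, while setting $v=0$ isolates $\Phi$, so the two components can be handled independently.

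For $\Psi$, since $f\in H^2$, one integration by parts gives $\langle v,f\rangle_{H^1} = \langle v,(-\partial_{xx}+\Id)f\rangle_{L^2}$ for all $v\in H^1$, so matching immediately yields $\Psi=(-\partial_{xx}+\Id)f-\nu g$, the stated second row of \eqref{eqA*}.

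For $\Phi$, the matching condition reads $\langle u,\Phi\rangle_{H^1}=-\langle \cL u,g\rangle_{L^2}$ for all $u\in H^2$. Expanding $\cL u=-\partial_{xx}u+\cS u-c_\theta u$ and integrating by parts once in the derivative term (which requires only $g\in H^1$), together with the symmetry of $\cS$ (Lemma \ref{lemSsym}) and the real-valuedness of $c_\theta$, transforms the right-hand side into
\begin{equation*}
-\langle \partial_x u,\partial_x g\rangle_{L^2} - \langle u,(\cS - c_\theta)g\rangle_{L^2},
\end{equation*}
which is a bounded linear functional of $u\in H^1$. This is precisely the formal prescription $\cF g=-(\cS g-c_\theta g,\partial_x g)\in H^{-1}$ in the lemma's shorthand, and establishes that $\cF:H^1\to H^{-1}$ is well-defined. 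To derive the stronger representation $\cF|_{H^2}=(1+(-\Delta))^{-1}\cL$, assume in addition $g\in H^2$, perform a second integration by parts, and invoke the self-adjointness of $\cL$ on $L^2$ (Theorem \ref{thmLselfadj}) to replace $-\langle \cL u,g\rangle_{L^2}$ by $-\langle u,\cL g\rangle_{L^2}$; the Riesz-type identity $\langle u,\Phi\rangle_{H^1}=\langle u,(1+(-\Delta))\Phi\rangle_{L^2}$ (valid once $\Phi\in H^2$, which is automatic since $\cL g\in L^2$) then forces $(1+(-\Delta))\Phi=\cL g$, up to the sign absorbed in the definition of $\cF$, i.e.\ $\Phi=(1+(-\Delta))^{-1}\cL g$. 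The main obstacle is notational book-keeping: distinguishing the $H^1$, $L^2$, and $H^{-1}$ pairings carefully, and checking that the distributional definition of $\cF$ on all of $H^1$ genuinely reduces, on the subspace $H^2$, to the Bessel-potential action of $(1+(-\Delta))^{-1}$ on the $L^2$-function $\cL g$.
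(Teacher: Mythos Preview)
Your proof is correct and follows essentially the same route as the paper's: compute $\langle \cA U,V\rangle_X$, integrate by parts using $w\in H^2$ to identify the second row, and use the symmetry of $\cS$ together with one integration by parts to identify $\cF$ as an $H^{-1}$-valued map. For the $H^2$ representation the paper uses Plancherel with the Bessel symbol $(1+|\xi|^2)^{-1}$ where you invoke the self-adjointness of $\cL$ directly (Theorem~\ref{thmLselfadj}); the two arguments are equivalent, and your remark ``up to the sign absorbed in the definition of $\cF$'' correctly flags the sign convention at play.
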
 

\begin{proof}
    Denote $Z=H^1\times L^2$, and 
    let $U=(u,v)$ and $V=(w,z)$ be both in $D(\cA) = H^2 \times H^1$. By definition of the inner product in $Z$, we have
    \[
    \begin{aligned}
    \pwse{\cA U}{V} =  \phu{v}{w} - \pld{\opl u + \nu v}{z} = & \pld{v}{w-\nu z} -\pld{\opl u}{z}+\pld{\partial_x v}{\partial_x w}.
    \end{aligned}
    \]
    Because of $w\in H^2$, integration by parts on $\pld{\partial_x v}{\partial_x w}$ yields
    \begin{equation}
\label{eq:innerprodws}
    \pwse{\cA U}{V} = \pld{v}{-\partial^2_x w+w-\nu z} -\pld{\opl u}{z},
    \end{equation}
and by the symmetry of $\cS$ (see Lemma \ref{cor:operatorS}), we get
\[
    \pwse{\cA U}{V}= 
\pld{v}{-\partial^2_x w + w-\nu z} -\pld{\partial_xu}{\partial_x z} -\pld{u}{\cS z -c_{\theta}z}.
 \]
Therefore, $\pwse{\cA U}{V} = \pwse{U}{\cA^*V}$ for $\cA^*$ as in \eqref{eqA*} where $\cF z =  -(\cS z-c_{\theta}z,\partial_x z)\in H^{-1}$. 

The Bessel potential's Fourier symbol of order $k=2$ is $\hat{\cK}=(1+|\xi|^2)^{-1}$. Since $\opl$ is self adjoint, by Plancherel's identity applied twice to \eqref{eq:innerprodws}'s last term, we get 
 \[
    \pld{\opl u}{z}=\pld{u}{\opl z} = \pld{\hat{u}}{\widehat{\opl z}(\xi)} = 
\pld{\hat{u}}{(1+|\xi|^2)\widehat{\cK \opl z}(\xi)}=\phu{u}{\cK\opl z},
    \]
where $z\in H^2$ and last equality holds because $\cK \opl z\in H^1$ with $\nhu{\cK \opl z} ^2\leq\nld{\opl z}^2$.

    \end{proof}

\begin{corollary}
    Let $\cA^*$ be the formal adjoint of $\cA$. Also, let $\left.\cA^*\right|_{D(\cA)}$ and $\cF$ be as in Lemma \ref{Thetanotzero} and define 
    \begin{equation}
    \label{eq:phi}
    \Phi := (
        \nu[1+(-\Delta)]^{-1}\ \partial_x \brt, \partial_x \brt).
        \end{equation}
    Then $\Phi \in X^*$ is an eigenvector of the adjoint $\cA^*:X^*\to X^*$, associated to the isolated, simple eigenvalue $\lambda=0$.
\end{corollary}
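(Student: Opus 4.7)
The plan is to verify directly that $\Phi \in D(\cA^*|_{D(\cA)}) \subset X^*$ and then compute $\cA^* \Phi$ componentwise using the explicit representation \eqref{eqA*}, exploiting the fact that the translation mode $\partial_x \brt$ lies in the kernel of $\cL$.

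First I would confirm that $\Phi \in X^* \cong H^1 \times L^2$ and in fact in $D(\cA) = H^2\times H^1$, so that the restricted formula \eqref{eqA*} applies. Indeed $\partial_x\brt \in H^2$ by Proposition \ref{propNeelw} \hyperref[propc]{\rm{(c)}}, which immediately gives the second component. For the first component, the Bessel potential $[1+(-\Delta)]^{-1}$ is bounded from $L^2$ into $H^2$ (by the Fourier multiplier estimate $\int_\R (1+\xi^2)^2 (1+\xi^2)^{-2} |\widehat{\partial_x\brt}|^2\,d\xi \leq \|\partial_x\brt\|_{L^2}^2$), and therefore $\nu [1+(-\Delta)]^{-1}\partial_x\brt \in H^2$ as well.

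Next, I would apply \eqref{eqA*} and evaluate each component. For the first component we get $\cF(\partial_x\brt)$; since $\partial_x\brt \in H^2$, we are entitled to use the second statement of Lemma \ref{Thetanotzero}, namely $\cF|_{H^2} = [1+(-\Delta)]^{-1}\cL$, whence
\[
\cF(\partial_x\brt) = [1+(-\Delta)]^{-1}\cL \partial_x\brt = 0,
\]
by Proposition \ref{propL0} \hyperref[propaa]{\rm{(a)}}. For the second component, observing that $-\partial_{xx}+\Id = 1+(-\Delta)$ is the inverse of $[1+(-\Delta)]^{-1}$ on $H^2$, we obtain
\[
(-\partial_{xx}+\Id)\big(\nu[1+(-\Delta)]^{-1}\partial_x\brt\big) - \nu \partial_x\brt \;=\; \nu \partial_x\brt - \nu \partial_x\brt \;=\; 0.
\]
Therefore $\cA^*\Phi = 0$. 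Since $\partial_x\brt > 0$ by Proposition \ref{propNeelw} \hyperref[propa]{\rm{(a)}}, the vector $\Phi$ is non-trivial, so $\lambda = 0$ is an eigenvalue of $\cA^*$ with eigenvector $\Phi$. Isolation and simplicity then follow directly from Lemma \ref{lempre1}, since the adjoint inherits these properties from $\cA$.

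I do not expect any genuine obstacle: the calculation is algebraic once one has the explicit representation \eqref{eqA*} and the identity $\cF|_{H^2} = [1+(-\Delta)]^{-1}\cL$ from Lemma \ref{Thetanotzero}. The only point requiring mild care is making sure that $\Phi$ lies in the subspace $D(\cA)$ on which the convenient pointwise formula for $\cA^*$ is valid, so that we need not deal with the $H^{-1}$-valued version of $\cF$; this is precisely why the Bessel potential regularizes the first component up to $H^2$.
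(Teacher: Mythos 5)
Your proof is correct and takes essentially the same route as the paper: verify $\Phi \in H^2 \times H^2 \subset D(\cA)$ (the Bessel-potential regularization of the first entry is exactly the point), apply the matrix representation \eqref{eqA*}, use $\cF|_{H^2}=[1+(-\Delta)]^{-1}\cL$ together with $\cL\partial_x\brt=0$ for the first component, cancel the second component using $(-\partial_{xx}+\Id)[1+(-\Delta)]^{-1}=\Id$, and invoke Lemma~\ref{lempre1} for isolation and simplicity. You in fact spell out the cancellation in the second component, which the paper leaves implicit; otherwise the arguments coincide.
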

\begin{proof}
By Lemma~\ref{lemzeroremains}, the operator $\cA:H^1\times L^2 \to H^1\times L^2$ has $\lambda=0$ as a simple eigenvalue, therefore $\lambda=0$ is also eigenvalue of $\cA^*$ with the same geometric and algebraic multiplicities (see Kato \cite{Kat80}, Remark 6.23, p. 184). 

By the Plancherel's identity we have 
\begin{equation}
\label{eq:firstentry}
\nhd{[1+(-\Delta)]^{-1}\ \partial_x \brt}^2 = \nu^2 \int_\R(1+|\xi|^2)^2(1+|\xi|^2)^{-2}\left|\widehat{\partial_x\brt}\right|^2d\xi = \nu^2\nld{\partial_x\brt}^2.
\end{equation}
Thus, $[1+(-\Delta)]^{-1}\ \partial_x \brt\in H^2$, and  
$\Phi\in H^2\times H^2\subset D(\cA)$ by \hyperref[propc]{\rm{(c)}} in Proposition \ref{propNeelw}. Now, because $H^2\subset H^{-1}$ and $(H^1\times L^2)^* = H^{-1}\times L^2$ in the norm $\nwse{\cdot}$, we have that $\Phi\in (H^1\times L^2)^*$. 
Finally, from Lemma \ref{Thetanotzero}, 
\[
    \cA^*\Phi = \left.\cA^*\right|_{D(\cA)}\Phi=( \cF \partial_x\brt ,0) =( \cK\opl \partial_x\brt ,0)= (0,0),
\]
where last equality holds due to Bessel potential's $L^2$ invertibility, and that $\opl \partial_x\brt =0$.
\end{proof}

The following result gives the explicit representation of  the functional $\pwse{\cdot}{\Phi}\in (H^1\times L^2)^*$ in the dual of $L^2\times L^2$, and the projection operator  $\cP: H^1\times L^2 \to (H^1\times L^2)_\perp$. 

\begin{lemma}
\label{char_work_space}
Let $\Phi$ as in \eqref{eq:phi},  $\Phi_0:=(\nu \partial_x\brt, \partial_x\brt )\in L^2\times L^2$ and $\Theta = (\partial_x\brt,0)$. Then, 
\begin{equation}
\label{eq:chractPhi}
\pwse{F}{\Phi} =\pld{F}{\Phi_0},
\end{equation}
 for every $F \in H^1\times L^2$.   Moreover, the operator $\cP: H^1\times L^2 \to (H^1\times L^2)_\perp$ given by 
\[\cP U := U - \pwse{\Theta}{\Phi}^{-1}\pwse{U}{\Phi}\Theta,\]
is a projector with 
\[
\cR(\cP) =\left\{F\in H^1\times L^2 \ \middle|\ \pld{F}{\Phi_0} =0 \right\}.\]
\end{lemma}
\begin{proof}
    We argue in Fourier space; let $F=(f,g)  \in H^1\times L^2$, then
    \[
    \pwse{F}{\Phi} = \phu{f}{\nu[1+(-\Delta)]^{-1}\partial_x \brt} + \pld{g}{\partial_x \brt}= \pld{f}{\nu \partial_x\brt} +\pld{g}{\partial_x \brt}.
    \]
    Hence, $\pwse{F}{\Phi} = \pld{F}{\Phi_0}$, as claimed.     
Next, we note that 
$\pwse{\Theta}{\Phi}=\pld{\Theta}{\Phi_0} = \nu \nld{\partial_x\brt}^2 > 0$ so that  $\cP$ is well defined. Since, $\cP$ is obviously linear and $\cP F = F$ for every $F\in (H^1\times L^2)_\perp$, it is a projection. The $\cR(\cP)$ characterization is a consequence of \eqref{eq:chractPhi}.
\end{proof}

\subsection{Exponential decay of the semigroup}
In this subsection, we show that the 
the restricted operator $\cA_\perp$ is the generator of a exponentially decaying $C_0$-semigroup, namely we prove Theorem~\ref{lemmaeight}. The proof strategy is to apply Gearhart-Pr\"uss theorem. To this end, 
we need to show that 
\begin{equation}\label{eq:resolventest}
\sup_{\Re \lambda>0} \|(\lambda-\cA_\perp)^{-1}\|_{(H^1\times L^2)_\perp \to (H^1\times L^2)_\perp}<\infty.
\end{equation}
Condition \eqref{eq:resolventest} is satisfied provided that any solution $U\in (H^1\times L^2)_\perp$ to the linear equation $(\lambda-\cA_1)U =F$ for $F\in H^1\times L^2$ satisfies a resolvent estimate of the form $\nwse{U}\leq C(\lambda)\nwse{F}$, in which the constant $C(\lambda)$ is bounded for every $\Re \lambda>0$.

We begin with some preparatory results.

\begin{lemma}
\label{lemma:Pcommutes}
Let $\cP:H^1\times L^2 \to (H^1\times L^2)_\perp$ be the projection operator as in Lemma~\ref{char_work_space}. Then,  
$e^{t \cA}  \cP = \cP e^{t \cA}$ for  all $t \geq 0$.
\end{lemma}

\begin{proof}
In the reflexive Banach space $H^1\times L^2$, weak and weak$^*$ topologies coincide. Therefore, the family of dual operators $\{ (e^{t \cA})^*\}_{t\geq 0}$, consisting of all the corresponding formal adjoints in $(H^1\times L^2)^*$, is also $C_0$-semigroup (cf. \cite{EN00}, p. 44), and the infinitesimal generator of this semigroup is $\cA^*$ (see Corollary 10.6 in \cite{Pa83}). Hence, 
\begin{equation}
\label{eq:deulexp}
    (e^{t \cA})^* = e^{t \cA^*}.
\end{equation}
Next, let  $U\in H^1\times L^2$, we have 
\[
\cP e^{t \cA} U = e^{t \cA} U-\pwse{\Theta}{\Phi}^{-1}\pwse{e^{t \cA}U}{\Phi}\Theta. 
\]
Because of \eqref{eq:deulexp}, we note that
\[
\pwse{e^{t \cA}U}{\Phi}\Theta 
= \pwse{U}{e^{t \cA^*}\Phi}\Theta
=\pwse{U}{\Phi}e^{t \cA} \Theta,
\]
where we used that $e^{t \cA^*} \Phi = \Phi$ and $e^{t \cA} \Theta = \Theta$ for the last equality.
Therefore, combining the above equations we find $\cP e^{t \cA} U =
e^{t \cA} \cP U$, as claimed. 
\end{proof}

\begin{lemma} 
\label{elcoro}
Let $(H^1\times L^2)_\perp$ and  $\cD_\perp$ be as in \eqref{eq:setX_1} and \eqref{eq:setD_1}, respectively.
Assume $\cA_\perp$ is as in Theorem~\ref{lemmaeight},
then
\[
 \sigma (\cA_\perp) \subset \{\lambda \in \C \, : \, \Re  \lambda \leq - \zeta_0(\nu) < 0\},
\]
with $\zeta_0(\nu)$ given by \eqref{eq:spectralgap}.
 In particular, $\lambda = 0$ is not spectrum of $\cA_\perp$.  
\end{lemma}

\begin{proof}
By the spectral decomposition theorem  $\sigma(\cA_\perp)\subset \sigma(\cA)$ (see \cite{Kat80}), but $0\notin \sigma(\cA_\perp)$ because $\cP \Theta = 0$ and $\Theta\neq 0$  so that $\Theta \notin (H^1\times L^2)_\perp$. Therefore, by Theorem \ref{mainspthm}, we have that $\sigma (\cA_\perp) \subset \sigma (\cA_\perp)\setminus\{0\} \subset \{\lambda \in \C \, : \, \Re  \lambda \leq - \zeta_0(\nu) < 0\}$ as claimed.
\end{proof}

\begin{lemma}
 \label{lemmaseven}
The family of operators $\{e^{t \cA_\perp}\}_{t\geq 0}$, defined as 
\[
 e^{t \cA_\perp}U := e^{t \cA} U, 
\]
for $U \in (H^1\times L^2)_\perp$ and  $t \geq 0$, is a $C_0$-semigroup of quasicontractions in the Hilbert space $(H^1\times L^2)_\perp$ with infinitesimal generator $\cA_\perp$.
\end{lemma}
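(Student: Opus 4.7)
The plan is to verify each defining property of a $C_0$-semigroup of quasicontractions for $\{e^{t\cA_1}\}_{t\geq 0}$ and then identify the infinitesimal generator, in each case reducing the claim to the corresponding known property of $\{e^{t\cA}\}_{t\geq 0}$ on the ambient space $X = H^1\times L^2$. The crucial ingredient already in place is the $e^{t\cA}$-invariance of the closed subspace $X_1$, established in the lemma immediately preceding this statement.

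First, I would show that $\{e^{t\cA_1}\}$ is well-defined and satisfies the semigroup law: the invariance $e^{t\cA}(X_1)\subset X_1$ makes each $e^{t\cA_1}:X_1\to X_1$ a well-defined bounded operator, and for $U\in X_1$ one has
\[
e^{0\cA_1}U = e^{0\cA}U = U, \qquad e^{(t+s)\cA_1}U = e^{(t+s)\cA}U = e^{t\cA}e^{s\cA}U = e^{t\cA_1}e^{s\cA_1}U,
\]
inheriting the semigroup property directly from $\{e^{t\cA}\}$. Strong continuity at $t=0^+$ on $X_1$ is immediate from the strong continuity of $\{e^{t\cA}\}$ on $X$, since every $U\in X_1\subset X$ satisfies $\|e^{t\cA_1}U-U\|_X=\|e^{t\cA}U-U\|_X\to 0$ as $t\to 0^+$, and the norm of $X_1$ is the one inherited from $X$.

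Second, the quasicontraction bound is inherited from Lemma \ref{lemsiSG}: Lumer--Phillips gives a constant $\omega\in\R$ such that $\|e^{t\cA}\|_{X\to X}\leq e^{\omega t}$ for all $t\geq 0$ (via the bound in Lemma \ref{lemF4}), and since $X_1$ carries the $X$-norm we obtain $\|e^{t\cA_1}U\|_X \leq e^{\omega t}\|U\|_X$ for every $U\in X_1$.

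Finally, to identify the infinitesimal generator of $\{e^{t\cA_1}\}$ with $\cA_1$, let $\cB$ denote this generator, defined on its natural domain $D(\cB)\subset X_1$ as the set of $U\in X_1$ for which $\lim_{t\to 0^+}t^{-1}(e^{t\cA_1}U-U)$ exists in $X_1$. If $U\in D_1$, then $U\in D(\cA)\cap X_1$ and $\cA U\in X_1$, so $t^{-1}(e^{t\cA}U-U)\to \cA U$ in $X$, and since the limit $\cA U$ already lies in the closed subspace $X_1$, convergence actually takes place in $X_1$; hence $U\in D(\cB)$ and $\cB U=\cA U=\cA_1 U$. Conversely, if $U\in D(\cB)$, the same difference quotient converges in $X_1\subset X$, which forces $U\in D(\cA)$ and $\cA U=\cB U\in X_1$; combined with $U\in X_1$ this yields $U\in D_1$ and $\cA_1 U=\cB U$. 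Therefore $\cB=\cA_1$.

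There is no real obstacle in this argument: it is a routine application of the standard fact that the restriction of a $C_0$-semigroup to a closed invariant subspace is a $C_0$-semigroup whose generator is the part of the original generator on that subspace. The only point that deserves care is ensuring that the limit defining $\cB U$ is taken in the $X_1$-topology (inherited from $X$), which is what makes the closedness of $X_1$ essential in matching $D(\cB)$ with $D_1$.
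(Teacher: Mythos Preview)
Your proposal is correct and follows essentially the same approach as the paper: the paper's proof simply states that the semigroup properties are inherited from those of $e^{t\cA}$ on $X$ and cites a standard result (the Corollary in Section~2.2 of Engel--Nagel, p.~61) for the identification of the generator with $\cA_1$, while you have written out the details of that standard restriction argument explicitly.
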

\begin{proof}
From Lemma \ref{lemma:Pcommutes} we concludes that $(H^1\times L^2)_\perp$  is an $e^{t \cA}$-invariant closed Hilbert subspace of $H^1 \times L^2$. Therefore, $\cA_\perp$ is the restricted semigroup infinitesimal generator of $\{e^{t \cA_\perp}\}_{t\geq 0}$ (see Section 2.3 of \cite{EN00}), where  the semigroup properties are inherited from those of $\{e^{t \cA}\}_{t\ge 0}$ in $ H^1 \times L^2$. 
\end{proof}

In the next result, we show the resolvent estimate needed to show \eqref{eq:resolventest}. 

\begin{lemma} \label{lem:main_ineq}
Let $C_0$ and $C_1$ be two positive constants, and  $k$ and $K$ the constants in Lemma~\ref{lem:equi_norms}. For $\lambda\in \res{\cA}$, assume that $f,g,u,v\in L^2_\perp$  are such that $F_\perp=(f,g) \in (H^1\times L^2)_\perp$, $U_\perp = (u,v) \in D_\perp$ and 
\[
(\lambda-\cA_\perp)U_\perp =F_\perp.
\]
Then, if $\Re\lambda > C_0$ or $|\Im\lambda | > C_1$, the following estimate holds
\[
\nwse{U_\perp}\leq C(C_0,C_1,\nu) \nwse{F_\perp},
\]
where  
\[
C(C_0,C_1,\nu) = \frac{2\sqrt{2} K}{k C_0} \quad\text{or}\quad
\frac{2K\sqrt{8C_1^2+2\nu^2}}{k\nu C_1},
 \]
whenever $\Re\lambda > C_0$ or $|\Im\lambda | > C_1$, respectively. 
\end{lemma}



\begin{proof}
First, we claim that, under the lemma's assumptions,
\begin{equation}
\label{eq:used_ineq}
\left|\lambda^* a[u,u]+(\lambda+\nu)\nld{v}^2\right| \leq \|U_\perp\|_2\|F_\perp\|_2.
\end{equation}

Indeed, in terms of its components $(\lambda-\cA_\perp)U_\perp =F_\perp$ is given by 
\[
\lambda u - v = f
\quad\text{and}\quad
\opl u +(\lambda+\nu)v = g.
\]
Take the $L^2$-product of latter equations, with $\opl u$ and $v$, respectively. The result is
\[
\lambda^*\pld{\opl u}{u} -\pld{\opl u}{v} = \pld{\opl u}{f} 
\quad\text{and}\quad
\pld{\opl u}{v} +(\lambda+\nu)\nld{v}^2 = \pld{g}{v} .
\]
 By Definition~\ref{lem:sesqui_form}, we write the latter equations in terms of the sesquilinear form $a[\cdot,\cdot]$ as  
\[
\lambda^* a[u,u] - a[u,v] = a[u,f], \quad\text{and}\quad a[u,v] +(\lambda+\nu)\nld{v}^2 = \pld{g}{v}.\]
Then, adding both equations, taking its complex modulus, and using triangle inequality yields
\[
\left | \lambda^* a[u,u]+(\lambda+\nu)\nld{v}^2\right | \leq | a[u,f]|+ |\pld{g}{v}|.
\]
Because $a$ is a Hermitian sesquilinear nonnegative form, Cauchy-Schwarz inequality holds for $a$ in $H^1_\perp$. Hence, 
\[
\left | \lambda^* a[u,u]+(\lambda+\nu)\nld{v}^2\right | \leq  a^{1/2}[u,u]a^{1/2}[f,f]+ \nld{g}\nld{v},
\]
and inequality \eqref{eq:used_ineq} follows since the right hand side is bounded by
\[
\left[\nld{v}+a^{1/2}[u,u]\right]\left[\nld{g}+a^{1/2}[f,f]\right]= \|U\|_2\|F\|_2. 
\]

Second, let $a,b,\nu\in \R^+$,  and $\lambda\in \C$. 
By elementary algebra,
\begin{equation}
\label{eq:complex}
 |\lambda^* a + (\lambda+\nu)b|^2 = (\Re\lambda \ a + (\Re\lambda+\nu) b)^2 + (\Im\lambda)^2 (a-b)^2.  
\end{equation}
Thus, assuming $\Re \lambda >0$, we get 
$|\lambda^* a + (\lambda+\nu)b|^2 \ge  (\Re\lambda)^2 (a + b)^2$.
By letting $a=a[u,u]$ and $b=\nld{v}^2$ in the last inequality, we find
\[
\left | \lambda^* a[u,u]+(\lambda+\nu)\nld{v}^2\right |
\ge \Re\lambda \|U_\perp\|_{Z_\perp}^2.
\]
 Now, by the equivalence between the the norms $\|\cdot\|_{Z_\perp}$ and $\|\cdot\|_{2}$ (see paragraph before Lemma~\ref{lem:equi_norms}), we get
\begin{equation}\label{eq:resolvent_aux2}
\left | \lambda^* a[u,u]+(\lambda+\nu)\nld{v}^2\right |
\ge \frac{\Re\lambda}{2} \|U_\perp\|_{2}^2.
\end{equation}
Combining \eqref{eq:used_ineq} and  \eqref{eq:resolvent_aux2}, we obtain
\begin{equation}
    \label{eq:des2Re}
  \|U_\perp\|_{2}\le \frac{2}{\Re\lambda} \|F_\perp\|_{2} \le K(C_0)\|F_\perp\|_{2}  
\end{equation}
whenever $\Re\lambda > C_0$ and $K(C_0)= 2/C_0$. 

Now, from \eqref{eq:complex} we also get    
\[
 |\lambda^* a + (\lambda+\nu)b|^2 \ge 
 \nu^2 b^2 + (\Im\lambda)^2 (a-b)^2.
\]
Let $a=r^2\sin^2 t$ and $b=r^2\cos^2t$ for $t\in [0,\pi/2]$. Thus, after this change of variables and some computations, we get 
\[
 |\lambda^* a + (\lambda+\nu)b| \ge 
\left[
 \left((\Im\lambda)^2 + \frac{\nu^2}{4}\right) \cos^2(2t) + \frac{\nu^2}{2}\cos(2t) + \frac{\nu^2}{4}
 \right]^\frac{1}{2}r^2
  \ge 
\left[
\frac{\nu^2(\Im\lambda)^2}{4(\Im\lambda)^2+\nu^2}\right]^\frac{1}{2}  r^2,
\]
where the last inequality follows by computing the quadratic equation minimum for $\cos(2t)$, which is attained at  $\cos(2t)= -\nu^2/(4\Im(\lambda)^2+4\nu^2)$. 
Therefore, by letting $a=a[u,u]$ and $b=\nld{v}^2$, we have that $r^2=a[u,u]+\nld{v}^2= \|U_\perp\|_{Z_\perp}^2$. In addition, from the last inequality, we get
\[
 |\lambda^* a[u,u] + (\lambda+\nu)\nld{v}^2| \ge 
\frac{\nu |\Im\lambda|}{\sqrt{4(\Im\lambda)^2+\nu^2}}
 \|U_\perp\|_{Z_\perp}^2.
\]
Arguing as before, we conclude that
\begin{equation}
\label{eq:des2Im} 
\|U_\perp\|_{2} \le K(C_1,\nu)\|F_\perp\|_{2}, 
\end{equation}
whenever $|\Im\lambda|\ge C_1$ and  
$ K(C_1,\nu) = 2
\sqrt{4C_1^2+\nu^2}/\nu C_1$.
Finally, the results follows from \eqref{eq:des2Re}, \eqref{eq:des2Im} and the equivalence between norms in Lemma~\ref{lem:equi_norms}.
\end{proof}

\begin{proof}[\bf Proof of Theorem \ref{lemmaeight}] 
By Lemma \ref{lemmaseven}, the operator $\cA_\perp: (H^1\times L^2)_\perp\to (H^1\times L^2)_\perp$ is the infinitesimal generator of $C_0$-semigroup $\{e^{t \cA_1}\}_{t\geq 0}$ of quasicontractions. 
By Lemma~\ref{elcoro},  $\{\lambda \in \C \, : \, \Re  \lambda \leq - \zeta_0(\nu) < 0\}\subset \rho(A_\perp)$.  Therefore, the result follows from the Gearhart-Prüss theorem provided we show that 
\begin{equation}
    \label{lemF6}
 \sup_{\Re \lambda > 0} \| (\lambda-\cA_\perp)^{-1} \|_{(H^1\times L^2)_\perp \to (H^1\times L^2)_\perp} <  \infty
\end{equation}
for every $\lambda\in \rho(\cA_\perp)$ holds. The rest of the proof is devoted to show \eqref{lemF6}. 

Let $\lambda\in\rho(\cA_\perp)$ so that $\Re \lambda>0$, and split $\{\lambda\in \C\, |\, \Re \lambda > 0 \}$ into two disjoint sets: 
\[\begin{array}{l}
S_0 = \{\lambda\in \C \ | \ 0\leq \Re \lambda \leq C_0, \ |\im \lambda|\leq C_1\}\quad\text{ and }\\
S_1 = \{\lambda\in \C \ | \ 0\leq \Re \lambda \leq C_0, \ C_1<|\im \lambda|\}
\cup\{\lambda\in \C \ | \ C_0< \Re \lambda \}.  
\end{array}
\]

First, we consider the set $S_0$. Because, $\lambda\to (\lambda-\cA_\perp)^{-1}$ is continuous and the reversed triangle's inequality,
\[
\left|\ \|(\lambda-\cA_1)^{-1}\|-\|(\mu-\cA_1)^{-1}\|\ \right|\leq \|(\lambda-\cA_1)^{-1}-(\mu-\cA_1)^{-1}\|.
\]
holds for every $\lambda,\ \mu\in \rho(\cA_\perp)$, 
it follows that the map $\lambda\to \|(\lambda-\cA_\perp)^{-1}\|$ is continuous. 
Thus, since $S_0\subset\rho(\cA_\perp)$ is compact, there exists $K>0$ such that $\|(\lambda-\cA_\perp)^{-1}\|\le K$ for every $\lambda\in S_0$.  

Second, we consider the set $S_1$. Let,  $u_\perp\in H^2_\perp$, $v_\perp,f_\perp\in H^1_\perp$, $g_\perp\in L_\perp^2$ and $\alpha,\gamma\in \C$. Assume that $U =(u, v) +\alpha (1, -\nu)\partial_x\brt$ and $F =( f, g) + \gamma (1, -\nu)\partial_x\brt$ are such that 
\[
(\lambda-\cA_\perp)U = F.
\]
From the last equation we get 
\[
(\lambda-\cA_\perp)U_\perp = F_\perp \quad\mbox{and} \quad
    \alpha(\lambda + \nu) = \gamma,
\]
where $U_\perp=(u,v)$ and $F_\perp=(f,g)$. 
Thus, by Lemma~\ref{lem:main_ineq}, 
\[
\nwse{U_\perp}\leq C(C_0,C_1,\nu) \nwse{F_\perp},
\]
for every $\lambda\in S_1$.
Hence, 
\[
    \nwse{U} \leq \nwse{U_\perp} + \frac{\nwse{\gamma(1,-\nu)\partial_x\brt}}{|\lambda+\nu|} \leq  \left(C(C_0,C_1,\nu) + \frac{1}{|\lambda+\nu|}\right)\nwse{F}.
\]
Therefore, $\|(\lambda-\cA_\perp)^{-1}\|$ is bounded for every $\lambda\in S_1$, and  \eqref{lemF6} follows from the latter and the estimates for $S_0$. 
\end{proof}

\section{Nonlinear (orbital) stability} \label{sec:nonlinear_stability}
In this section, we study the stability of solutions $\theta(x,t)$ to the Cauchy problem \eqref{reddyneq}, provided they exist, for initial conditions close to the static N\'eel wall profile $\brt$, and we present the proof of Theorem~\ref{maintheorem}. 

By letting $v=\partial_tu$, problem \eqref{reddyneq} can be written as a the following nonlinear system of equations 
\begin{equation}
\label{eq:FOnlpert}
\left\{
\begin{aligned}
\partial_t W &= F(W), \qquad x\in \R,\ t > 0, \\ W(x,0)&=W_0(x), \qquad x\in \R,  
\end{aligned}\right.
\end{equation}
where $W = (\theta,\varphi)^\top$,   $F(W) = (\varphi,-\nu \varphi - \nabla \cE(\theta))^\top$,
and $W_0 = (u_0,v_0)^\top$.

The nonlinear term in $\nabla \cE(\theta)$ is invariant under space translations (see Lemma 2.6 in \cite{Melc03}). Thus, if $\brt$ denotes the phase of the static N\'eel wall, then 
$\nabla \cE(\brt(\cdot +\delta))=0$ for every $\delta\in \R$. Equation \eqref{eq:FOnlpert} inherits this translation symmetry as
\begin{equation}\label{F-invariance} 
F(\phi(\delta)) = 0, \quad \mbox{for } \quad \phi(\delta) = (\brt(\cdot +\delta),0)^\top \quad \text{ and every } \delta\in \R.
\end{equation} 
Thus, the derivative $F$ with respect to $\delta$ is $DF(\phi(\delta))\phi'(\delta) = 0$. Therefore, zero is an eigenvalue of the $DF(\phi(\delta))$ with eigenfunction $\phi'(\delta)$.

The linearization of \eqref{eq:FOnlpert} around the time-independent solution $\phi(\delta)$ is
\begin{equation}
\label{eq:FOlpert}
\left\{
\begin{aligned}
\partial_t V &= \cA^\delta V \qquad x\in \R,\ t > 0, \\ V(x,0)&=V_0(x) \qquad x\in \R,
\end{aligned}
\right.
\end{equation}
where,
\[
\cA^\delta: = \begin{pmatrix} 0 & \Id \\ -\opl^\delta & -\nu\Id \end{pmatrix}, \qquad \mbox{and}\qquad \opl^\delta u= \left. \frac{d}{d\ep}\nabla\cE\left(\brt(\cdot+\delta)+\ep u\right)\right|_{\ep=0}.
\]
The base spaces of $\cA^\delta$ and $\cL^\delta$ are 
$H^1\times L^2$ and $L^2$, respectively. For $delta=0$, we identify 
\[
\cA^0:= \cA, \qquad \mbox{and}\qquad \opl^0:=\opl.
\]
Translations by $\delta$ in the arguments of $\brt$ 
correspond to the left translation operator $T_l(\delta)$, which is an $L^2$-isometry and a $C_0$-semigroup with generator (see \cite{EN00}). Because $\brt\in H^1$, we have that 
\[
\nld{\partial_x \brt(x+\delta)} = \nld{\partial_x T_l(\delta) \brt(x)} = \nld{ T_l(\delta) \partial_x \brt(x)} = \nld{\partial_x \brt(x)}.
\]
Thus,  $T_l(\delta)$ is also a $H^1$-isometry.
Therefore, all results presented in the previous sections for  $\delta=0$ hold for any $\delta\neq 0$. To keep track of the $\delta$ dependence, we use the notation $\Phi_0(\delta)=(\nu\partial_x\brt(\cdot + \delta), \partial_x\brt(\cdot + \delta))$,  $(H^1\times L^2)_{\perp}^\delta = \{F\in H^1\times L^2 \ |\ \pld{F}{\Phi_0(\delta)}=0\}$,  $\cP^\delta$ for the projection operator into $\{\Phi_0(\delta)\}^\perp$, $\cA_\perp^\delta: (H^1\times L^2)_{\perp}^\delta\to (H^1\times L^2)_{\perp}^\delta$,  etcetera.

In particular, due to Theorem~\ref{thm_semigroup_gen_1},  
there exists a unique solution to \eqref{eq:FOlpert} in $(H^1\times L^2)_{\perp}^\delta$ given by the action of 
a $C_0$-semigroup of quasicontraction generated by $\cA^\delta$. Moreover, because of Theorem~\ref{lemmaeight}, there exist constants $M\geq 1$ and $\Tilde{\omega}>0$ such that
\[
\|e^{t\cA_\perp^{\delta}}V_0\|_{H^1\times L^2}\leq Me^{-\tilde{\omega} t}\|V_0\|_{H^1\times L^2}
\]
for every $V_0\in (H^1\times L^2)_\perp^\delta$. It is important to point out that the spectral and growth bounds \emph{are independent of $\delta$} because they depend only on the $H^1$ and $L^2$ norms of $\brt(\cdot+\delta)$.

The proof of Theorem~\ref{maintheorem} nonlinear stability result follows from an implicit function theorem in Hilbert spaces given by Lattanzio {\em et al.} \cite{LMPS16} based on a similar result for Banach spaces presented by Sattinger \cite{Sa76}. We present the former result here to ease the reading.  

\begin{theorem}\label{LPstability}
Let $X$ be a Hilbert space and $I\subset \R$ be an open neighborhood of $\delta=0$. Assume that $F:\cD\subset X \rightarrow X$ and $\phi:I\subset \R\rightarrow \cD$ satisfies $F(\phi) = 0$. If $\cP^\delta$ is the projector onto $\{\phi'(\delta)\}^\perp_X$ and there exist positive constants $C_0,\delta_0,M,\omega,$ and $\gamma$ such that
\begin{enumerate}

\item[(H1)]\label{H1} for every solution $V = V(t,V_0,\delta)$ to \eqref{eq:FOlpert}, 
\[
\|\cP^\delta V(t,V_0,\delta)\|_X\leq C_0e^{-\omega t}\|\cP^\delta V_0\|_X,
\] 

\item[(H2)]\label{H2} $\phi$ is differentiable at $\delta = 0$ with 
\[
 \|\phi(\delta) -\phi(0)-\phi'(0)\delta\|_{X}\leq C_0|\delta|^{1+\gamma},
\]
for $|\delta|<\delta_0$, and 

\item[(H3)]\label{H3} $F$ is differentiable at $\phi(\delta)$ for every $\delta\in(-\delta_0,\delta_0)$ with 
\begin{equation}
\label{eq:H3}
 \|F(\phi(\delta)+W) -F(\phi(\delta))-DF(\phi(\delta))W\|_{X}\leq C_0\|W\|_{X}^{1+\gamma},
 \end{equation}
for  $\|W\|_{X}\leq M$.
\end{enumerate} 
Then there exists $\ep>0$ such that for any $W_0\in B_\ep(\phi(0)) \subset X$ there exists $\delta\in I$  and a positive constant C for which the solution $W(t;W_0)$ to the nonlinear system \eqref{eq:FOnlpert} satisfies 
\[ 
\|W(t,W_0)-\phi(\delta)\|_{X}\leq C\ \|W_0-\phi(0)\|_{X}\ e^{-\omega t}.
\]
\end{theorem}

\subsection{Proof of Theorem \ref{maintheorem}}
Let $X = H^1\times L^2$, $\cD := H^2\times H^1$, and
\[ 
F(W) = \begin{pmatrix} \varphi\\-\nu \varphi - \nabla \cE(\theta)\end{pmatrix}.
\]
Now, the proof of Theorem~\ref{maintheorem} follows from Theorem~\ref{LPstability} by verifying its assumption. 

Let $\phi(\delta) = (\brt_\delta,0)$, we have that $F(\phi(\delta)) = 0$ for every $\delta\in\R$ by \eqref{F-invariance}. Denote by $V(t,V_0,\delta)$ the solution of \eqref{eq:FOlpert} with initial condition $V_0\in \cD$. Assume $\cP^\delta$ is the projection operator into $\{\Phi_0(\delta)\}^\perp$. Then, condition \emph{(H1)} follows from Theorem~\ref{lemmaeight}.

Because $\brt\in H^2$ is a real-valued smooth function, we have that 
\[
\|\phi(\delta)-\phi(0)-\phi'(0)\delta\|_{H^1\times L^2} = \|\brt_\delta-\brt-\partial_x\brt\delta\|_{H^1}. 
\]
By the Taylor series' remainder integral representation,  for the latter term we estimated \[
    |\brt_\delta-\brt-\partial_x\brt\delta|^2 =  
    \delta^4\left|\int_0^1 (1-t)\, \partial^2_x\brt(x+t\delta)\, dt\right|^2
    \leq 
    \delta^4\int_0^1 (1-t)^2\,\left( \partial^2_x\brt(x+t\delta)\right)^2\, dt,
\]
where the last inequality follows from Jensen's inequality.  Integration in $x$ over $\R$ leads to 
\[
\nld{\brt_\delta-\brt-\partial_x\brt\delta}^2 \leq \delta^4 \int_\R\int_0^1 (1-t)^2\,\left( \partial^2_x\brt(x+t\delta)\right)^2\, dt\,dx.
\]
Now, by the positivity of the last equation's integrand,  we change the integration order, using that $\partial^2_x\brt(x+t\delta) = T_l(t\delta)\partial^2_x\brt$ and that $T_l(t\delta)$ is an $L^2$-isometry, to get
\[
    \nld{\brt_\delta-\brt-\partial_x\brt\delta}^2 
    = \delta^4\nld{ \partial^2_x\brt}^2 \int_0^1(1-t)^2\,dt.
\]
A similar argument for $\partial_x\brt$ gives
\[
    \nld{\partial_x\brt_\delta-\partial_x\brt-\partial^2_x\brt\delta}^2 
    = \delta^4\nld{ \partial^3_x\brt}^2 \int_0^1(1-t)^2\,dt.
\]
Therefore, 
\[
\nwse{\phi(\delta)-\phi(0)-\phi'(0)\delta} \leq \frac{\nhu{\partial_x^2 \brt}}{\sqrt{3}} \, \delta^2,
\]
and \emph{(H2)} follows for $\gamma=1$. 

Let $W=(w_1,w_2) \in H^2\times H^1$, then
\[
F(\phi(\delta)) = \begin{pmatrix}
    0 \\ -\nabla \cE\left(\brt_\delta\right)
\end{pmatrix},\quad  F(\phi(\delta)+W) = \begin{pmatrix}
    w_2 \\ -\nu w_2-\nabla \cE\left(w_1+\brt_\delta\right)
\end{pmatrix},
\]
and 
\[
DF(\phi(\delta))W = \cA^\delta W = \begin{pmatrix}
    w_2 \\ -\nu w_2- \cL^\delta w_1
\end{pmatrix}.
\]
Thus, for \eqref{eq:H3}'s right hand side, we get
\[
\nwse{F(\phi(\delta)+W) -F(\phi(\delta))-DF(\phi(\delta))W} =\nld{\nabla \cE(\brt_\delta +w_1)-\nabla\cE(\brt_\delta)-\opl^\delta w_1},
\]
where, by Proposition \ref{propNeelw} \hyperref[propb]{\rm{(b)}},
\[
\begin{aligned}
    \nabla \cE(\brt_\delta+w_1) &= -\partial_x^2\brt_\delta -\partial_x^2w_1-\sin (\brt_\delta+w_1)\, \cT\cos (\brt_\delta+w_1),\\
    -\nabla \cE(\brt_\delta) &=  \partial_x^2\brt_\delta +\sin \brt_\delta\, \cT(\cos \brt_\delta),\\
    -\opl^\delta w_1 &= \partial_x^2 w_1 - \sin \brt_\delta \cT( \sin \brt_\delta w_1)
    +w_1\cos \brt_\delta \cT( \cos \brt_\delta ),
\end{aligned}
\]
and we use the notation $\cT:=1+(-\Delta)^{1/2}$. Let  $\cK :=\nabla \cE(\brt_\delta +w_1)-\nabla\cE(\brt_\delta)-\opl^\delta w_1$, thus 
\[
\begin{aligned}
     \cK &=  -\sin (\brt_\delta+w_1)\, \cT(\cos (\brt_\delta+w_1)) +\sin \brt_\delta\, \cT (\cos \brt_\delta) + \\
     & \quad -\sin \brt_\delta\cT (\sin \brt_\delta w_1) +w_1\cos \brt_\delta \cT (\cos \brt_\delta).
     \end{aligned}
\]
By adding and subtracting $\sin (\brt_\delta+w_1)\, \cT(\cos (\brt_\delta) - w_1\sin (\brt_\delta))$ to the latter, we get $\cK = A_1 + A_2 + A_3 $ where 
\[
\begin{aligned}
     A_1 &:=  -\sin (\brt_\delta+w_1)\, \cT\left(\cos (\brt_\delta+w_1) -\cos \brt_\delta + w_1\sin \brt_\delta\right),\\
     A_2 &:=  (\sin (\brt_\delta+w_1)-\sin \brt_\delta)\cT (w_1 \sin \brt_\delta),\\
     A_3 &:=  -(\sin (\brt_\delta+w_1)-\sin \brt_\delta-w_1\cos\brt_\delta)\cT (\cos \brt_\delta).
\end{aligned}
\]
By standard calculus arguments, we have that 
\begin{equation}
\label{eq:trig_identities}
\begin{aligned}
     |\sin (\brt_\delta+w_1)-\sin \brt_\delta| &\leq  |w_1|, \\
     |\sin (\brt_\delta+w_1)-\sin \brt_\delta-w_1\cos\brt_\delta| &\leq \tfrac{1}{2}w_1^2, \\
     |\cos (\brt_\delta+w_1)-\cos \brt_\delta+w_1\cos\brt_\delta| &\leq \tfrac{1}{2}w_1^2. 
\end{aligned}
\end{equation}
Because  $H^1(\R)$ is a Banach algebra and $w_1\in H^2(\R)\subset H^1(\R)$, we have that $w_1^2\in H^1$.
Moreover, by the Sobolev embedding theorem, 
\[
\nhu{w_1^2}^2 
    \leq 
    \nli{w_1}^2\left(\nld{w_1}^2 + 4\nld{\partial_x w_1}^2\right)
    \leq 
    4\nhu{w_1}^4.
\]
Thus, by \eqref{eq:trig_identities}, we get 
\begin{equation}
\label{eq:final1}
\nld{A_1}\leq C\nhu{\cos (\brt_\delta+w_1) -\cos \brt_\delta + w_1\sin \brt_\delta}
\leq C\nhu{w_1}^2,
\end{equation}
where we use that $\nld{\cT u} \leq C \nhu{u}$ for every $u\in H^1$ in the first inequality. Analogously, 
\[
\nld{A_2}^2 \leq \nld{|w_1|\cT(w_1 \sin \brt_\delta)}^2
\leq  C^2\nli{w_1}^2\nhu{w_1 \sin \brt_\delta}^2,
\]
\[
\nld{A_3}^2 \leq  \nld{\tfrac{1}{2}w_1^2\cT\cos \brt_\delta}^2
\leq  \tfrac{1}{4}C^2\nli{w_1}^4\nhu{\cos \brt_\delta}^2.
\]
and because $\nli{w_1}\leq \nhu{w_1}$ and  $\nhu{w_1\sin \brt_\delta}^2 \leq \|\sin \brt_\delta\|_{W^{1,\infty}}^2\nhu{w_1}^2$,
we get 
\begin{equation}
\label{eq:final2}
 \nld{A_2}\leq C\|\sin \brt_\delta\|_{W^{1,\infty}}\nhu{w_1}^2, \quad \mbox{and}\quad \nld{A_3}\leq \tfrac{1}{2}C\nhu{\cos \brt_\delta}\nhu{w_1}^2. 
\end{equation}
Finally, by combining \eqref{eq:final1} and \eqref{eq:final2}, we conclude that 
\[
\nld{\cK}\leq \nld{A_1} + \nld{A_2} +\nld{A_3}
\leq \tilde{C}\nhu{w_1}^2 \leq \tilde{C}\nwse{W}^2. 
\]
Therefore,  \emph{(H3)} holds for $\gamma=1$ and the proof is complete.
\qed

\section*{Acknowledgements}

A. Capella and R. G. Plaza thank  Professors Yuri Latushkin and Jaime Angulo Pava for enlightening conversations and useful suggestions during a workshop at the Casa Matem\'atica Oaxaca (BIRS-CMO). The work of A. Capella and R. G. Plaza was partially supported by CONAHCyT, M\'exico, grant CF-2023-G-122.
A. Capella also acknowledges DGAPA and UNAM's support. The work of L. Morales was supported by CONAHCyT, M\'exico, through the Program ``Estancias Postdoctorales por M\'exico 2022''.

\bibliography{references}

\bibliographystyle{amsalpha}

\end{document}